\newcommand{\Tr}{\operatorname{Tr}}
\newcommand{\Aut}{\operatorname{Aut}}
\newcommand{\Jac}{\operatorname{Jac}}
\newcommand{\End}{\operatorname{End}}
\newcommand{\PGU}{\operatorname{PGU}}
\newcommand{\SL}{\operatorname{SL}}
\newcommand{\FF}{\mathbb{F}}
\newcommand{\PP}{\mathbb{P}}
\newcommand{\ZZ}{\mathbb{Z}}
\newcommand{\QQ}{\mathbb{Q}}
\newcommand{\CC}{\mathbb{C}}
\begin{document}
\title{Zeta functions of a class of Artin--Schreier curves with many
  automorphisms} \author{Irene Bouw, Wei Ho, Beth Malmskog, Renate Scheidler,  Padmavathi Srinivasan, and Christelle Vincent}
\institute{Irene Bouw \at Institute of Pure Mathematics, Ulm University, D-89069 Ulm, \email{irene.bouw@uni-ulm.de}
  \and Wei Ho \at Department of Mathematics, University of Michigan, 530 Church Street, Ann Arbor, MI 48109, \email{weiho@umich.edu}
  \and Beth Malmskog \at Department of Mathematics and Statistics, Villanova University, 800 Lancaster Avenue, Villanova, PA 19085, \email{beth.malmskog@villanova.edu}
  \and
  Renate Scheidler \at    Department of Mathematics and Statistics, University of Calgary, 2500 University Drive NW, Calgary, Alberta, T2N 1N4, \email{rscheidl@ucalgary.ca}
  \and
Padmavathi Srinivasan \at Department of Mathematics, Massachusetts Institute of Technology, 77 Massachusetts Avenue, Cambridge, MA 02139, \email{padma\_sk@math.mit.edu}
\and Christelle Vincent \at 
Department of Mathematics, Stanford University, 450 Serra Mall, Building 380, Stanford, CA 94305, \email{cvincent@stanford.edu}
}
\authorrunning{Bouw, Ho, Malmskog, Scheidler, Srinivasan, and Vincent}
  \maketitle

\abstract{This paper describes a class of Artin--Schreier curves, generalizing results of Van der Geer and
Van der Vlugt to odd characteristic. The automorphism group of these curves contains a large
extraspecial group as a subgroup. Precise knowledge of this subgroup makes it possible to compute the
zeta function of the curves in this class over the field of definition of all automorphisms in the
subgroup.
\keywords{2010 {\em Mathematics Subject Classification}. Primary 14G10. Secondary: 11G20,14H37.}}

\section{Introduction}\label{sec:intro}

In \cite{GeerVlugt}, Van der Geer and Van der Vlugt introduced a class
of Artin--Schreier curves over a finite field with a highly rich
structure. For example, these curves have a very large automorphism
group that contains a large extraspecial $p$-group as a
subgroup. Results of Lehr--Matignon \cite{LehrMatignon} show that the
automorphism groups of these curves are ``maximal'' in a precise
sense. (Lehr--Matignon call this a {\em big action}.)  A further
remarkable property is that all these curves are supersingular. This
yields an easy way of producing large families of supersingular
curves.

In \cite{GeerVlugt}, the authors explore these curves and their
Jacobians over fields of characteristic $2$. In this case, there is an
intriguing connection between the curves in this class and the weight
enumerator of Reed--M\"uller codes, which was their original
motivation for investigating this family of curves. In Sect. 13 of
\cite{GeerVlugt}, they sketched extensions of some of their results to
odd characteristic, but few details are given. The present paper
extends the main results and strategy of \cite{GeerVlugt} to the
corresponding class of curves in odd characteristic, providing full
details and proofs.

The main difference between the two cases is that
the aforementioned extraspecial group of automorphisms has exponent
$p$ in the case of odd characteristic $p$, whereas the exponent is $4$
in characteristic $2$. As a result, some of the arguments in the odd
characteristic case are more involved than those of \cite{GeerVlugt}.
Moreover, we
have streamlined the reasoning of \cite{GeerVlugt} and combined it
with ideas from \cite{LehrMatignon} to describe the automorphism group
of the curves under investigation.



Arguably the most important object associated to an algebraic curve is
its zeta function since it encodes a large amount of information about
the curve, including point counts. Our main result is
Theorem \ref{thm:zeta} which computes the zeta function of the members of the family
of curves under consideration over a sufficiently large field. This
not only generalizes the corresponding result in \cite{GeerVlugt} for
characteristic $2$, but we also note that the authors of
\cite{GeerVlugt} do not offer an odd-characteristic analogue in their
paper. 

The most prominent member of the family of curves considered in this
paper is the Hermite curve $H_p$ (Example \ref{exa:hermite}), which is
well known to be a maximal curve over fields of square cardinality. We
discuss other members of the family that are maximal in Sect.
\ref{sec:examples}. More examples along the same lines have also been
found by {\c{C}}ak{\c{c}}ak and {\"O}zbudak in
\cite{CO07}. 

We now describe the contents of this paper in more detail. Let $p$ be
an odd prime and $R(X)\in \overline{\FF}_p[X]$ be an additive polynomial
of degree $p^h$, i.e., for indeterminates $X$ and $Y$ we have
$R(X+Y)=R(X)+R(Y)$.  We denote by $C_R$ the smooth projective curve
given by the Artin--Schreier equation
\[
Y^p-Y=XR(X).
\]
The key to the structure of the curve $C_R$ is the bilinear form
$\Tr(XR(Y)+YR(X))$, introduced in Sect. \ref{sec:pointcount}, whose
kernel $W$ is characterized in Proposition
\ref{prop:B}, part \ref{item:prop:b-part2}. We obtain an expression for
the number of points of $C_R$ over a finite field in terms of
$W$. Over a sufficiently large field $\FF_q$ of square cardinality, we
conclude that the curve $C_R$ is either maximal or minimal,
i.e., either the upper or lower Hasse--Weil bound is attained (Theorem
\ref{thm:quadric} and part \ref{maxcharacterize} of Remark
\ref{rem:minmax}). To determine which of these
cases applies, we use the automorphisms of $C_R$.

In Sects. \ref{sec:S(f)} and \ref{sec:auto}, we show that $W$ also
determines a large $p$-subgroup $P$ of the group of automorphisms
(Theorem \ref{thm:aut}). With few exceptions, $P$ is the Sylow
$p$-subgroup of $\Aut(C_R)$ (Theorem \ref{thm:semigroups}).  It is an
extraspecial group of exponent $p$ and order $p^{2h+1}$, where
$\deg(R)=p^h$ (Theorem \ref{thm:extraspecial}).

In general, the size of the automorphism group restricts the
possibilities for the number of rational points of a curve.  In our
situation, there is a concrete relationship, since both the automorphisms
and the rational points of $C_R$ may  be described in terms of the
space $W$. We establish a point-counting result that applies to the
smallest field $\FF_q$ over which all automorphisms in $P$ are
defined.

The determination of the zeta function of $C_R$ over $\FF_q$ (Theorem
\ref{thm:zeta}) relies on a decomposition result for the Jacobian
$J(C_R)$ of $C_R$ (Proposition \ref{prop:KR}) that is an application
of a result of Kani--Rosen \cite{KaniRosen}. More precisely, we show
that $J(C_R)$ is isogenous over $\FF_q$ to the product of Jacobians of
quotients of $C_R$ by suitable subgroups of $P$ over $\FF_q$
(Proposition \ref{prop:KR}).  These quotient curves are twists of the
curve $C_{R_0}$ with $R_0(X)=X$ (Theorem \ref{thm:Aquotient}) for
which we may determine the zeta function by explicit point
counting. Putting everything together yields a precise expression for
the zeta function of $C_R$.

Our results also yield explicit examples of maximal curves (Sect.
\ref{sec:examples}). The main technical difficulty here is determining
the field $\FF_q$ over which all automorphisms in $P$ are defined.




\bigskip
\noindent {\em Acknowledgments.} This research began at the Women in
Numbers 3 workshop that took place April 20--25, 2014, at the Banff
International Research Station (BIRS) in Banff, Alberta (Canada). We
thank the organizers of this workshop as well as the hospitality of
BIRS. We also thank Mike Zieve for pointing out some references to us.

IB is partially supported by
DFG priority program SPP 1489. WH is partially
supported by NSF grant DMS-1406066, and RS  is supported
by NSERC of Canada.


\subsection{Notation
}\label{sec:notation}

Let $p$ denote an odd prime, $\mathbb{F}_p$ be the finite field of order $p$, and $k =
\overline{\mathbb{F}}_p$ be the algebraic closure of $\mathbb{F}_p$.
All curves under consideration are assumed to be smooth, projective and absolutely irreducible.
Consider the 
curve $C_R$ defined by the affine equation
\begin{equation}\label{eq:CR}
Y^p-Y = XR(X) ,
\end{equation}
where
\begin{equation*}
R(X) = \sum_{i=0}^{h} a_i X^{p^i} \in \mathbb{F}_{p^r}[X]
\end{equation*}
is a fixed additive polynomial of degree $p^h$ with $h \geq 0$ and whose coefficient field is denoted
$\mathbb{F}_{p^r}$. Note that $R$ is additive, i.e., $R(X+Y)=R(X)+R(Y)$ in $\mathbb{F}_{p^r}[X]$.
Thus, $C_R$ is defined over $\mathbb{F}_{p^r}$ and has genus
\begin{equation*}
g(C_R) = \frac{p^h(p-1)}{2} .
\end{equation*}
Of interest will be the polynomial $E(X)$ derived from $R(X)$ via
\begin{equation}\label{eq:E}
E(X) = (R(X))^{p^h} + \sum_{i=0}^{h} (a_i X)^{p^{h-i}} \in \FF_{p^r}[X]
\end{equation}
with zero locus
\begin{equation} \label{eq:W}
W = \{ c \in k : E(c) = 0 \} .
\end{equation}
Note that the formal derivative of $E(X)$ with respect to $X$ is the
constant non-zero polynomial $a_h$, so $E(X)$ is a separable additive
polynomial of degree $p^{2h}$ with coefficients in
$\mathbb{F}_{p^r}$. It follows that $W$ is an $\FF_p$-vector space of
dimension $2h$. When $h = 0$, i.e., $R(X) = a_0 X$, we have  $W = \{ 0 \}$. 

We denote by $\mathbb{F}_q$ the splitting field of $E(X)$, so $W
\subset \mathbb{F}_q$. In Sect. \ref{sec:auto} of this paper we will
define and investigate a subgroup $P$ of the group of automorphisms of
$C_R$, and the automorphisms contained in $P$ will be defined over
this field $\mathbb{F}_q$.

\bigskip
For convenience, we summarize the most frequently used notation in Table \ref{tab:notation}.

\begin{center}
\begin{longtable}{|l|l|}
\caption{Frequently used notation} \label{tab:notation} \\
\hline
\textbf{Symbol} & \textbf{Meaning and place of definition} \\
\hline\hline
\endfirsthead
\multicolumn{2}{c}%
{\tablename\ \thetable\ -- \textit{Continued from previous page}} \\
\hline
\textbf{Symbol} & \textbf{Meaning and place of definition} \\
\hline
\endhead
\hline \multicolumn{2}{c}{\textit{Continued on next page}} \\
\endfoot
\hline
\endlastfoot
$p$ & an odd prime \\
$\mathbb{F}_{p^r}$ & field of definition of $R(X)$ and of $C_R$ (Sect. \ref{sec:notation}) \\
$\mathbb{F}_{p^s}$ & an arbitrary extension of $\mathbb{F}_{p^r}$ (Sect. \ref{sec:pointcount}) \\
$\mathbb{F}_q$ & $\mathbb{F}_q \supseteq \mathbb{F}_{p^r}$ splitting field of $E(X)$ (Sect. \ref{sec:notation}) \\
$k = \overline{\mathbb{F}}_p$ & algebraic closure of $\mathbb{F}_p$ (Sect. \ref{sec:notation})\\
$C_R$ & the curve $C_R : Y^p - Y = X R(X)$ over $\mathbb{F}_{p^r}$ (Eq. \ref{eq:CR})\\
$\overline{C}_A$ & quotient curve $C_R/A$ (Theorem \ref{thm:Aquotient}) \\
$R(X)$ & $R(X) = \sum_{i=0}^h a_i X^{p^i} \in \mathbb{F}_{p^r}[X]$ an additive polynomial (Eq. \ref{eq:CR})\\
$E(X)$ & $E(X) = (R(X))^{p^h} + \sum_{i=0}^h (a_iX)^{p^{h-i}} \in \mathbb{F}_{p^r}[X]$ (Eq. \ref{eq:E}) \\
$b, c$ & elements in $k$ with $b^p - b = cR(c)$ (Remark \ref{rem:b-identity}) \\
$B_c(X)=B(X)$ & polynomial s.t.\ $B(X)^p - B(X) = cR(X) + R(c)X$  \\
 & \hspace*{.85in} (Eqs. \ref{eq:B0} and \ref{eq:compareB}) \\
$W(\mathbb{F}_{p^s})$ & $W(\mathbb{F}_{p^s}) = \{ c \in \mathbb{F}_{p^s} :\Tr_{\mathbb{F}_{p^s}/\mathbb{F}_p}(cR(y)+y(R(c)) = 0 \text{ for all } y \in \mathbb{F}_{p^s} \}$ \\
& \hspace*{.85in} (Eq. \ref{eq:WFs}) \\
$W$ & $W = W(\mathbb{F}_q)$, space of zeros of $E(X)$ (Eq. \ref{eq:W}) \\

\parbox{0.5in}{$S(f)$ \\ } & \parbox{3.5in}{ $S(f)=\{(a,c, d)\in k^\ast \times k\times \FF_p^\ast : \text{there is } g\in k[X]$ s. t. \\
\hspace*{0.85in} $f(aX+c)-df(X)=g(X)^p-g(X) \}$ (Eq. \ref{eq:S(f)}) } \\
$\Aut^0(C_R)$ & group of automorphisms of $C_R$ that fix $\infty$
(Sect. \ref{sec:auto}) \\
$\sigma_{a,b,c,d}$ & automorphism in $\Aut^0(C_R)$ (Eq. \ref{eq:sigmaabcd}) \\
$\sigma_{b,c}$ & $\sigma_{b,c} = \sigma_{1,b,c,1}$ (Sect. \ref{sec:extraspecial}) \\
$\rho$ & Artin--Schreier automorphism, $\rho = \sigma_{1,1,0,1}$ (following Eq. \ref{eq:sigmaabcd}) \\
 $P$ & Sylow $p$-subgroup of $\Aut^0(C_R)$
(Theorem \ref{thm:aut}) \\ $H$ & $H = \Aut^0(C_R)/P$
(Theorem \ref{thm:aut}) \\ $Z(G)$ & center of a group $G$ \\ $E(p^3)$
& extraspecial group of order $p^3$ and exponent $p$
(Corollary \ref{cor:extraspecial}) \\ ${\mathcal A}$& a maximal abelian
subgroup of $P$ (Proposition \ref{prop:subgroups})\\ $J_R$ & $J_R
= \Jac(C_R)$, the Jacobian variety of $C_R$ \\
$J\sim_\FF J'$& the ab. var. $J$ and $J'$ are isogenous over the field $\FF$ (Sect. \ref{sec:KR}).\\
$L_{C, \FF}(T)$ & numerator of the zeta function of the curve $C$ over the field $\FF$\\
& \hspace*{0.85in} (Sect. \ref{sec:zeta})
\end{longtable}
\end{center}

\section{The kernel of the bilinear form associated to $C_R$}\label{sec:pointcount}


Let $\mathbb{F}_{p^s}$ be any extension of $\mathbb{F}_{p^r}$. For each $s$ a multiple of $r$, we
associate to the curve $C_R$ the $s$-ary quadratic form
\begin{equation*}
x \mapsto \Tr_{\mathbb{F}_{p^s}/\mathbb{F}_p}(xR(x))
\end{equation*}
on $\mathbb{F}_{p^s}$, where $\Tr_{\mathbb{F}_{p^s}/\mathbb{F}_p} \colon
\mathbb{F}_{p^s} \rightarrow \mathbb{F}_p$ is the trace from the
$s$-dimensional vector space  $\mathbb{F}_{p^s}$
down to $\mathbb{F}_p$. The associated symmetric bilinear form on
$\mathbb{F}_{p^s} \times \mathbb{F}_{p^s}$ is
\begin{equation} \label{eq:bilinear}
(x,y) \mapsto \frac{1}{2}\Tr_{\mathbb{F}_{p^s}/\mathbb{F}_p}(xR(y)+yR(x)),
\end{equation}
%
with kernel
\begin{equation} \label{eq:WFs}
W(\mathbb{F}_{p^s}) = \{ c \in \mathbb{F}_{p^s} : \Tr_{\mathbb{F}_{p^s}/\mathbb{F}_p}(cR(y)+yR(c)) = 0
    \text{ for all } y \in \mathbb{F}_{p^s} \}.
\end{equation}
Note that $W(\mathbb{F}_{p^s})$ is a vector space over $\FF_p$. The
following characterizations and properties of $W(\mathbb{F}_{p^s})$
will turn out to be useful.

\begin{proposition} \label{prop:B}
Let $c \in \mathbb{F}_{p^s}$. Then the following hold:
\begin{enumerate}
\item \label{item:prop:b-part1} If $c \in W(\mathbb{F}_{p^s})$, then
    $\Tr_{\mathbb{F}_{p^s}/\mathbb{F}_p}(cR(c)) = 0$.
\item \label{item:prop:b-part2} We have $c \in W(\mathbb{F}_{p^s})$ if and only if there exists
    a polynomial $B(X) \in \mathbb{F}_{p^s}[X]$ with
\begin{equation} \label{eq:B0}
B(X)^p-B(X)= cR(X)+R(c)X.
\end{equation}
Moreover, there is a unique solution $B_c(X) \in X\FF_{p^s}[X]$ to the equation (\ref{eq:B0}), and
\begin{enumerate}
\item \label{item:prop:b-part2a} The polynomial $B_c(X)$ is additive.
\item \label{item:prop:b-part2b} Every solution $B(X)$ of (\ref{eq:B0}) is of the form
    $B(X) = B_c(X) + \beta$ for some $\beta \in \FF_p$. 
\item \label{item:prop:b-part2c} If $c_1, c_2 \in W(\mathbb{F}_{p^s})$, then $B_{c_1+c_2}(X) =
    B_{c_1}(X)+B_{c_2}(X)$.
\end{enumerate}

\item \label{item:prop:b-part3} We have $c \in W(\mathbb{F}_{p^s})$ if and only if $E(c) = 0$,
    where $E(X)$ is the polynomial of (\ref{eq:E}) with zero locus $W$ as defined in
    (\ref{eq:W}). In other words, $W(\mathbb{F}_{p^s}) = W \cap \mathbb{F}_{p^s}$.
\end{enumerate}
\end{proposition}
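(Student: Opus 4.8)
The plan is to deduce every part of the proposition from a single explicit computation, combined with the classical Artin--Schreier description of the image of the map $X \mapsto X^p - X$. Part \ref{item:prop:b-part1} is immediate: if $c \in W(\mathbb{F}_{p^s})$, then setting $y = c$ in the defining relation of \eqref{eq:WFs} gives $\Tr_{\mathbb{F}_{p^s}/\mathbb{F}_p}(2cR(c)) = 0$, and since $p$ is odd we may divide by $2$.

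For the remaining parts I would first establish the equivalences
\[
c \in W(\mathbb{F}_{p^s}) \iff E(c) = 0 \iff \text{(\ref{eq:B0}) has a solution } B \in \mathbb{F}_{p^s}[X],
\]
which simultaneously proves the existence half of \ref{item:prop:b-part2} and all of \ref{item:prop:b-part3}. For ``$E(c)=0 \Rightarrow$ solvable'', write $f(X) := cR(X) + R(c)X = (ca_0+R(c))X + \sum_{i=1}^h c a_i X^{p^i}$ and look for an additive solution $B(X) = \sum_{j\ge 0} b_j X^{p^j}$. Comparing the coefficient of $X^{p^i}$ on both sides of $B^p - B = f$ forces $b_0 = -(ca_0 + R(c))$, $b_i = b_{i-1}^p - ca_i$ for $1 \le i \le h$, and $b_i = b_{i-1}^p$ for $i > h$; hence a polynomial solution exists precisely when $b_h = 0$. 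Raising the recursion $b_i = b_{i-1}^p - ca_i$ to the $p^{h-i}$-th power and summing over $i$ telescopes to $b_h = b_0^{p^h} - \sum_{i=1}^h (a_i c)^{p^{h-i}}$, and since $p^h$ is odd $b_0^{p^h} = -R(c)^{p^h} - (a_0 c)^{p^h}$, so $b_h = -E(c)$ by the very definition \eqref{eq:E} of $E$. Thus $E(c)=0$ yields the additive polynomial $B_c(X) := \sum_{j=0}^{h-1} b_j X^{p^j} \in X\mathbb{F}_{p^s}[X]$ solving \eqref{eq:B0}; being a sum of $p$-power monomials it is additive, which is \ref{item:prop:b-part2a}. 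Conversely, if some $B \in \mathbb{F}_{p^s}[X]$ solves \eqref{eq:B0}, then for every $y \in \mathbb{F}_{p^s}$ the element $cR(y)+yR(c) = B(y)^p - B(y)$ lies in the image of $X\mapsto X^p-X$ on $\mathbb{F}_{p^s}$, which is $\ker \Tr_{\mathbb{F}_{p^s}/\mathbb{F}_p}$ (additive Hilbert~90); hence $c \in W(\mathbb{F}_{p^s})$.

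It then remains to pin down $B_c$. Given any solution $B$ of \eqref{eq:B0}, set $\Delta(X,Y) := B(X+Y) - B(X) - B(Y)$; applying $X \mapsto X^p - X$ and using additivity of $f$ shows $\Delta^p = \Delta$ in $\mathbb{F}_{p^s}[X,Y]$, so $\Delta$ is a constant lying in $\mathbb{F}_p$, and evaluating at $(0,0)$ identifies it with $-B(0)$. Hence $B - B(0)$ is additive — in particular lies in $X\mathbb{F}_{p^s}[X]$ — and, comparing constant terms in \eqref{eq:B0}, $B(0) \in \mathbb{F}_p$; moreover two additive solutions of \eqref{eq:B0} differ by a constant annihilated by $X\mapsto X^p-X$, hence coincide. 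This yields the uniqueness of $B_c$ and part \ref{item:prop:b-part2b}. For \ref{item:prop:b-part2c}, additivity of $R$ gives $f_{c_1+c_2} = f_{c_1} + f_{c_2}$, so $B_{c_1}+B_{c_2} \in X\mathbb{F}_{p^s}[X]$ solves \eqref{eq:B0} for $c_1+c_2$, and uniqueness forces $B_{c_1+c_2} = B_{c_1}+B_{c_2}$.

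The only step that is more than bookkeeping is the telescoping identity $b_h = -E(c)$: one must verify that unwinding the recursion reproduces exactly the polynomial $E(X)$ of \eqref{eq:E}, and this is precisely where the parity of $p^h$ and the shape of $E$ enter. A smaller point that needs care is that \eqref{eq:B0} could a priori admit non-additive solutions; the $\Delta(X,Y)$ argument shows it does not (up to an $\mathbb{F}_p$-constant), which is what makes the equivalence with $E(c)=0$ — and hence part \ref{item:prop:b-part3} — clean.
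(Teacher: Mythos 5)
Your proof of part \ref{item:prop:b-part1}, your telescoping identity $b_h=-E(c)$, and your $\Delta(X,Y)$ argument for uniqueness and parts \ref{item:prop:b-part2a}--\ref{item:prop:b-part2c} are all correct (and the explicit identification of the obstruction $b_h$ with $-E(c)$ is a nice touch: the paper leaves the link between the recursion and $E$ implicit and cites the literature for part \ref{item:prop:b-part3}). However, your claimed three-way equivalence has a genuine gap: you prove $E(c)=0\Rightarrow$ (\ref{eq:B0}) is solvable (via the recursion) and (\ref{eq:B0}) solvable $\Rightarrow c\in W(\mathbb{F}_{p^s})$ (via Hilbert 90), and your $\Delta$ argument upgrades this to ``solvable $\Leftrightarrow E(c)=0$''. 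But the cycle is never closed: nowhere do you show $c\in W(\mathbb{F}_{p^s})\Rightarrow E(c)=0$, or equivalently $c\in W(\mathbb{F}_{p^s})\Rightarrow$ (\ref{eq:B0}) is solvable. Without that implication, the ``only if'' direction of part \ref{item:prop:b-part2} and the forward direction of part \ref{item:prop:b-part3} (hence the inclusion $W(\mathbb{F}_{p^s})\subseteq W$, which is exactly what Corollary \ref{cor:W} needs) are unproven. This is not a cosmetic omission: it is the one implication that genuinely uses the non-degeneracy of the trace pairing on $\mathbb{F}_{p^s}$, whereas everything you wrote uses only polynomial identities and the easy half of Hilbert 90.

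The paper closes this as follows. Define $B_c(X)=\sum_{i=0}^{h-1}b_iX^{p^i}$ by your recursion truncated at $i=h-1$, \emph{without} assuming $E(c)=0$; then $B_c(X)^p-B_c(X)=cR(X)+R(c)X+\epsilon X^{p^h}$ with $\epsilon=b_{h-1}^p-ca_h$ (your $b_h$). Applying $\Tr_{\mathbb{F}_{p^s}/\mathbb{F}_p}$ at every $y\in\mathbb{F}_{p^s}$, the left side vanishes by Hilbert 90 and the term $\Tr(cR(y)+R(c)y)$ vanishes by the hypothesis $c\in W(\mathbb{F}_{p^s})$, so $\Tr_{\mathbb{F}_{p^s}/\mathbb{F}_p}(\epsilon y^{p^h})=0$ for all $y$; since $y\mapsto y^{p^h}$ is a bijection of $\mathbb{F}_{p^s}$ and the trace form is non-degenerate, $\epsilon=0$, i.e., $b_h=0$, i.e., $E(c)=0$ and $B_c$ solves (\ref{eq:B0}). (Alternatively, one can show $c\in W(\mathbb{F}_{p^s})\Leftrightarrow E(c)=0$ directly by rewriting $\Tr(cR(y)+yR(c))=\Tr\bigl((\textstyle\sum_i(ca_i)^{p^{s-i}}+R(c))\,y\bigr)$ and invoking non-degeneracy.) With this implication added, your argument becomes a complete and essentially self-contained proof.
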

\begin{proof} \hfill
\begin{enumerate}
\item Let $c \in W(\mathbb{F}_{p^s})$. Then substituting $y = c$ into (\ref{eq:WFs})
    yields $\Tr_{\mathbb{F}_{p^s}/\mathbb{F}_p}(2cR(c)) = 0$. Since $
    \Tr_{\mathbb{F}_{p^s}/\mathbb{F}_p}(X)$ is $\mathbb{F}_p$-linear and $p$ is odd, this
    forces $\Tr_{\mathbb{F}_{p^s}/\mathbb{F}_p}(cR(c))=0$.

\item The proof of part \ref{item:prop:b-part2} is analogous to that
  of Proposition 3.2 of \cite{GeerVlugt}. Assume that 
$c\in
  W(\FF_{p^s})$. We  show the existence of a solution $B$ of (\ref{eq:B0}), and show that statements \ref{item:prop:b-part2a}--\ref{item:prop:b-part2c} hold.  

We first recursively define numbers $b_i$
  using the following formulas:
%
\begin{align}
b_0 &= -c a_0 - R(c),\label{eq:B2} \\
b_i& = -ca_i + b_{i-1}^p \qquad \text{ for $1 \leq i \leq h-1$}\label{eq:B3},
\end{align}
and set $B_c(X) = \sum_{i=0}^{h-1} b_i X^{p^i}$. Then $B_c(X) \in X\FF_{p^s}[X]$, $B_c(X)$ is
additive, and $B_{c_1+c_2}(X) = B_{c_1}(X) +B_{c_2}(X)$ for all $c_1, c_2 \in
W(\mathbb{F}_{p^s})$. Furthermore, a simple calculation reveals that
\[ B_c^p(X)-B_c(X)=cR(X)+R(c)X+\epsilon X^{p^h}\]
with $\epsilon = b_{h-1}^p -ca_h \in \FF_{p^s}$. Note that
$\Tr_{\mathbb{F}_{p^s}/\mathbb{F}_p}(B_c(y)^p-B_c(y)) = 0$ for all $y \in \mathbb{F}_{p^s}$ by the additive version of Hilbert's Theorem $90$.

If $c \in W(\mathbb{F}_{p^s})$, then $\Tr_{\mathbb{F}_{p^s}/\mathbb{F}_p}(cR(y)+yR(c)) = 0$ for
all $y \in \mathbb{F}_{p^s}$, therefore $\Tr_{\mathbb{F}_{p^s}/\mathbb{F}_p}(\epsilon y^{p^h}) = 0$,
which forces $\epsilon = 0$. Hence $B_c(X)$ satisfies (\ref{eq:B0}), and
\begin{equation} \label{eq:B4}
b_{h-1}^p = ca_h .
\end{equation}
Moreover, if $B(X)$ is any solution to (\ref{eq:B0}), then $(B(X) - B_c(X))^p = B(X) - B_c(X)$,
so $B(X) - B_c(X) \in \FF_p$.

Conversely, if (\ref{eq:B0}) has a solution $B(X) \in \mathbb{F}_{p^s}[X]$, then
\[ 0 = \Tr_{\mathbb{F}_{p^s}/\mathbb{F}_p}(B(y)^p-B(y)) = \Tr_{\mathbb{F}_{p^s}/\mathbb{F}_p}(cR(y) +
R(c)y) \]
for all $y \in \mathbb{F}_{p^s}$, so $c \in W(\mathbb{F}_{p^s})$.

\item This result is stated for $p$ odd in Proposition 13.1 and
  proved for $p = 2$ in Proposition 3.1 of \cite{GeerVlugt}. It is
  also addressed in Remark 4.15 of the preprint \cite{LMpreprint} (the
  explicit statement is not included in \cite{LehrMatignon}, but can
  readily be deduced from the results therein). 
\end{enumerate}
 \hfill $\qed$
\end{proof}

\begin{remark}\label{rem:Bp=2}
The characteristic-$2$ analogue of Proposition \ref{prop:B} part \ref{item:prop:b-part2}  can be found in Sect. 3 of \cite{GeerVlugt}.  We also note that part \ref{item:prop:b-part1} of Proposition \ref{prop:B} does not hold in characteristic $p=2$ in general (see Sect. 5 of \cite{GeerVlugt}).
\end{remark}

Part \ref{item:prop:b-part3} of Proposition \ref{prop:B}
immediately establishes the following corollary.

\begin{corollary} \label{cor:W}
$W(\mathbb{F}_{p^s}) \subseteq W$, with equality for any extension $\mathbb{F}_{p^s}$ of the splitting field
$\mathbb{F}_q$ of $E$.
\end{corollary}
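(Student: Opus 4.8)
The corollary to be proved is Corollary \ref{cor:W}: $W(\mathbb{F}_{p^s}) \subseteq W$, with equality for any extension $\mathbb{F}_{p^s}$ of the splitting field $\mathbb{F}_q$ of $E$.

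\medskip

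The plan is to deduce everything directly from part \ref{item:prop:b-part3} of Proposition \ref{prop:B}, which identifies $W(\mathbb{F}_{p^s})$ with $W \cap \mathbb{F}_{p^s}$. First I would recall that $W$ was defined in (\ref{eq:W}) as the full zero locus of $E(X)$ in $k = \overline{\mathbb{F}}_p$, and that by Proposition \ref{prop:B}, part \ref{item:prop:b-part3}, an element $c \in \mathbb{F}_{p^s}$ lies in $W(\mathbb{F}_{p^s})$ if and only if $E(c) = 0$, i.e., if and only if $c \in W$. Since membership in $W(\mathbb{F}_{p^s})$ already presupposes $c \in \mathbb{F}_{p^s}$, this gives $W(\mathbb{F}_{p^s}) = W \cap \mathbb{F}_{p^s}$, and in particular the inclusion $W(\mathbb{F}_{p^s}) \subseteq W$ is immediate.

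For the equality statement, I would argue as follows: if $\mathbb{F}_{p^s} \supseteq \mathbb{F}_q$, then because $\mathbb{F}_q$ is by definition the splitting field of $E(X)$ over $\mathbb{F}_{p^r}$, every root of $E$ in $k$ already lies in $\mathbb{F}_q$, hence in $\mathbb{F}_{p^s}$. Therefore $W \subseteq \mathbb{F}_q \subseteq \mathbb{F}_{p^s}$, so $W \cap \mathbb{F}_{p^s} = W$, and combined with the previous paragraph we get $W(\mathbb{F}_{p^s}) = W$. I would note in passing that this uses the observation (already recorded in Sect. \ref{sec:notation}) that $E(X)$ is separable of degree $p^{2h}$, so $W$ is a finite set — indeed an $\mathbb{F}_p$-vector space of dimension $2h$ — entirely contained in $\mathbb{F}_q$.

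There is essentially no obstacle here: the content is entirely in Proposition \ref{prop:B}, part \ref{item:prop:b-part3}, and the corollary is a formal consequence. The only point requiring any care is making sure the direction of the set-theoretic reasoning is stated cleanly — that $W(\mathbb{F}_{p^s}) \subseteq W$ holds for \emph{all} extensions $\mathbb{F}_{p^s}$ of $\mathbb{F}_{p^r}$, while equality requires $\mathbb{F}_{p^s}$ to contain the splitting field $\mathbb{F}_q$ so that no roots of $E$ are missed.
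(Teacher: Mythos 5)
Your proof is correct and follows exactly the route the paper intends: the paper derives Corollary \ref{cor:W} as an immediate consequence of part \ref{item:prop:b-part3} of Proposition \ref{prop:B}, i.e., from the identity $W(\mathbb{F}_{p^s}) = W \cap \mathbb{F}_{p^s}$ together with the fact that $W \subseteq \mathbb{F}_q$ since $\mathbb{F}_q$ is the splitting field of the separable polynomial $E$. Your write-up simply makes explicit what the paper leaves as immediate.
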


  We
conclude this section with a connection between the $\FF_p$-dimension
of the space $V_s = \mathbb{F}_{p^s}/W(\mathbb{F}_{p^s})$ and the
number of $\mathbb{F}_{p^s}$-rational points on the curve $C_R$. This
is obtained by projecting the bilinear form (\ref{eq:bilinear}) onto
$V_s$. We write $\overline{x} = x+ W(\mathbb{F}_{p^s})$ for the
elements in $V_s$.  Proposition \ref{prop:quadric} below is one of the key
ingredients in the determination of the zeta function of $C_R$ over
$\FF_q$ (Theorem \ref{thm:zeta}).

\begin{proposition} \label{prop:nondegenerate}
Define a map $Q_s$ on $V_s \times V_s$ via
\[ Q_s(\overline{x}, \overline{y}) = \frac{1}{2}\Tr_{\mathbb{F}_{p^s}/\mathbb{F}_p}(xR(y)+yR(x)) . \]
Then $Q_s$ is a non-degenerate bilinear form on $V_s \times V_s$.
\end{proposition}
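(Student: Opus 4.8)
The plan is to recognize $Q_s$ as the form induced on $V_s=\mathbb{F}_{p^s}/W(\mathbb{F}_{p^s})$ by the symmetric bilinear form $\beta(x,y)=\frac{1}{2}\Tr_{\mathbb{F}_{p^s}/\mathbb{F}_p}(xR(y)+yR(x))$ of (\ref{eq:bilinear}), and then to invoke the standard fact that a symmetric bilinear form on a vector space descends to a non-degenerate form on the quotient by its radical. Concretely, there are three things to check: (i) $Q_s$ is well defined, i.e.\ independent of the chosen representatives; (ii) $Q_s$ is $\mathbb{F}_p$-bilinear (and symmetric); (iii) $Q_s$ is non-degenerate.

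For (i), write $x'=x+w_1$, $y'=y+w_2$ with $w_1,w_2\in W(\mathbb{F}_{p^s})$. Using the additivity of $R$, expand
\[
\Tr_{\mathbb{F}_{p^s}/\mathbb{F}_p}\bigl(x'R(y')+y'R(x')\bigr)
=\Tr_{\mathbb{F}_{p^s}/\mathbb{F}_p}\bigl(xR(y)+yR(x)\bigr)
+\Delta,
\]
where $\Delta$ collects the terms involving at least one of $w_1,w_2$. Grouping $\Delta$ as
$\Tr\bigl(w_1R(y)+yR(w_1)\bigr)+\Tr\bigl(w_2R(x)+xR(w_2)\bigr)+\Tr\bigl(w_1R(w_2)+w_2R(w_1)\bigr)$
and applying the defining property (\ref{eq:WFs}) of $W(\mathbb{F}_{p^s})$ to each bracket (with $c=w_1$ or $c=w_2$), each summand vanishes, so $\Delta=0$ and $Q_s$ is well defined. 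For (ii), additivity of $Q_s$ in each slot is immediate from the additivity of $R$ and the $\mathbb{F}_p$-linearity of the trace; for $\lambda\in\mathbb{F}_p$ one has $R(\lambda X)=\lambda R(X)$ since $\lambda^{p^i}=\lambda$, whence $Q_s(\lambda\overline{x},\overline{y})=\lambda\, Q_s(\overline{x},\overline{y})$, and symmetry is visible from the formula.

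For (iii), suppose $\overline{x}\in V_s$ satisfies $Q_s(\overline{x},\overline{y})=0$ for all $\overline{y}\in V_s$. Since every element of $\mathbb{F}_{p^s}$ reduces to some $\overline{y}$, this says $\frac{1}{2}\Tr_{\mathbb{F}_{p^s}/\mathbb{F}_p}(xR(y)+yR(x))=0$ for all $y\in\mathbb{F}_{p^s}$, which is exactly the condition $x\in W(\mathbb{F}_{p^s})$ of (\ref{eq:WFs}); hence $\overline{x}=0$ in $V_s$, and $Q_s$ is non-degenerate. I do not anticipate a genuine obstacle here: the only point requiring any care is the well-definedness in step (i), and it falls out directly from the fact that $W(\mathbb{F}_{p^s})$ is by construction the radical of the form (\ref{eq:bilinear}).
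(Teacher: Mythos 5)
Your proof is correct and follows essentially the same route as the paper's: well-definedness from the fact that $W(\mathbb{F}_{p^s})$ is the radical of the form (\ref{eq:bilinear}), bilinearity from the additivity of $R$ and $\mathbb{F}_p$-linearity of the trace, and non-degeneracy directly from the definition (\ref{eq:WFs}). The only cosmetic difference is that you verify well-definedness by expanding representatives and cancelling the cross-terms, whereas the paper phrases it as a chain of equivalences; both rest on the same fact.
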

\begin{proof}
We begin by showing that $Q_s$ is well-defined. Let $x_1, x_2 \in \mathbb{F}_{p^s}$. Then
\begin{align*}
\overline{x}_1 = \overline{x}_2 & \Longleftrightarrow x_1 - x_2 \in W(\mathbb{F}_{p^s}) \\
& \Longleftrightarrow \Tr_{\mathbb{F}_{p^s}/\mathbb{F}_p}((x_1-x_2)R(y)+yR(x_1-x_2)) = 0 \mbox{ for all } y \in \FF_{p^s} \\
& \Longleftrightarrow \Tr_{\mathbb{F}_{p^s}/\mathbb{F}_p}(x_1R(y)+yR(x_1)) = \Tr_{\mathbb{F}_{p^s}/\mathbb{F}_p}(x_2R(y)+yR(x_2)) \mbox{ for all } y \in \FF_{p^s} \\
& \Longleftrightarrow Q_s(\overline{x}_1, \overline{y}) = Q_s(\overline{x}_2, \overline{y}) \mbox{ for all } \overline{y} \in V_s.
\end{align*}
Similarly, one obtains that $\overline{y}_1 = \overline{y}_2$ if and only if $ Q_s(\overline{x}, \overline{y}_1) = Q_s(\overline{x}, \overline{y}_2)
\mbox{ for all } \overline{x} \in V_s$. So if $(\overline{x}_1, \overline{y}_1) = (\overline{x}_2,
\overline{y}_2)$, then $Q_s(\overline{x}_1, \overline{y}_1) = Q_s(\overline{x}_1, \overline{y}_2) =
Q_s(\overline{x}_2, \overline{y}_2)$.

It is obvious that $Q_s$ is bilinear. To establish non-degeneracy, let
$\overline{x} \in V_s$ with $Q_s(\overline{x}, \overline{y}) = 0$ for
all $\overline{y} \in V_s$. Then
$\Tr_{\mathbb{F}_{p^s}/\mathbb{F}_p}(xR(y)+yR(x)) = 0$ for all $y \in
\FF_{p^s}$, so $x \in W(\FF_{p^s})$, and hence $\overline{x} =
\overline{0}$.
\hfill $\qed$ \end{proof}

It follows that the quadratic form  $\overline{x} \mapsto Q_s(\overline{x}, \overline{x})$ on $V_s$
is non-degenerate. Therefore, its zero locus
\begin{equation*}
\{ \overline{x} \in V_s : \Tr_{\mathbb{F}_{p^s}/\mathbb{F}_p}(xR(x))  = 0\}
\end{equation*}
defines a smooth quadric over $\mathbb{F}_{p}$.

In \cite{Joly}, Joly provides a formula for the cardinality of the
zero locus of a non-degenerate quadratic form, which we reproduce here
for the convenience of the reader. The case of $n$ odd is treated in
Chap. 6, Sect. 3, Proposition 1, and the case of $n$ even  is Proposition 2 of Chap. 6, Sect. 3.
Note that in \cite{Joly}, the result is proved for forms over an arbitrary finite field, but we
restrict to $\mathbb{F}_p$ here which is sufficient for our purpose.

\begin{theorem}[Joly \cite{Joly}] \label{thm:quadric}
Let $a_1 X_1^2 + \cdots + a_n X_n^2$ be a non-degenerate quadric in $n$ variables with coefficients
in $\FF_p$, and $N$ be the cardinality of its zero locus. Then
\begin{equation*}
N = \begin{cases}
p^{n-1} & \textup{if $n$ is odd}, \\
p^{n-1} + (p^{n/2} -p^{n/2-1}) & \textup{if $n$ is even and $(-1)^{n/2}a_1 \cdots a_n \in (\mathbb{F}_p^{\ast})^2$}, \\
p^{n-1} - (p^{n/2} -p^{n/2-1}) & \textup{if $n$ is even and $(-1)^{n/2}a_1 \cdots a_n \notin (\mathbb{F}_p^{\ast})^2$}.
\end{cases}
\end{equation*}
\end{theorem}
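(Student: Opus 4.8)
The plan is to compute $N$ by a standard additive-character argument, exploiting the fact that the quadric is already given in diagonal form. Fix a nontrivial additive character $\psi\colon\FF_p\to\CC^\ast$, say $\psi(x)=e^{2\pi i x/p}$, and use the orthogonality relation (namely $\sum_{t\in\FF_p}\psi(tz)$ equals $p$ when $z=0$ and $0$ otherwise) to write
\[
N=\frac1p\sum_{t\in\FF_p}\ \sum_{x\in\FF_p^n}\psi\Big(t\sum_{i=1}^n a_i x_i^2\Big)=\frac1p\sum_{t\in\FF_p}\ \prod_{i=1}^n\Big(\sum_{x\in\FF_p}\psi(ta_i x^2)\Big).
\]
The term $t=0$ contributes $p^{n-1}$; the content of the theorem is that the terms with $t\neq0$ either cancel (when $n$ is odd) or combine into the stated correction (when $n$ is even).

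Next I would handle the inner one-variable sums for $t\neq0$. The key lemma is $\sum_{x\in\FF_p}\psi(ax^2)=\eta(a)\,g$ for $a\in\FF_p^\ast$, where $\eta$ denotes the Legendre symbol mod $p$ and $g=\sum_{x\in\FF_p}\psi(x^2)$ is the quadratic Gauss sum; alongside it I need only the elementary identity $g^2=\eta(-1)\,p$, and not the delicate determination of the sign of $g$ itself. Multiplying over $i=1,\dots,n$ and using multiplicativity of $\eta$ gives, for each $t\neq0$,
\[
\prod_{i=1}^n\sum_{x\in\FF_p}\psi(ta_i x^2)=\eta(t)^n\,\eta(a_1\cdots a_n)\,g^n,
\]
so that $N=p^{n-1}+\tfrac1p\,\eta(a_1\cdots a_n)\,g^n\sum_{t\in\FF_p^\ast}\eta(t)^n$.

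Finally I would split on the parity of $n$. If $n$ is odd, $\sum_{t\in\FF_p^\ast}\eta(t)=0$ and we get $N=p^{n-1}$. If $n$ is even, $\eta(t)^n=1$ for every $t\neq0$, so the sum over $t$ equals $p-1$; combining this with $g^n=(g^2)^{n/2}=\eta(-1)^{n/2}p^{n/2}=\eta\big((-1)^{n/2}\big)\,p^{n/2}$ yields
\[
N=p^{n-1}+(p-1)p^{n/2-1}\,\eta\big((-1)^{n/2}a_1\cdots a_n\big)=p^{n-1}+\big(p^{n/2}-p^{n/2-1}\big)\,\eta\big((-1)^{n/2}a_1\cdots a_n\big),
\]
and since $\eta$ of a nonzero element is $+1$ or $-1$ according as that element is or is not a square in $\FF_p^\ast$, this is precisely the three-case formula.

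The only genuine work lies in the two character-sum facts invoked above: the evaluation of $\sum_x\psi(ax^2)$ in terms of $g$ and $\eta$ (by completing the square, or by counting how often each value of $x^2$ is attained), and the value $g^2=\eta(-1)p$ (by expanding $g\,\overline{g}$ and reducing to a single Gauss sum). Both are classical and short, so I would either include two-line proofs or simply cite them; crucially, no hard input such as Gauss's theorem on the sign of the quadratic Gauss sum is needed. An alternative that sidesteps Gauss sums is induction on $n$, peeling off a binary block $a_1x_1^2+a_2x_2^2$ and relating the count in $n$ variables to that in $n-2$ variables; this also works but is more bookkeeping-intensive, so I prefer the character-sum computation above.
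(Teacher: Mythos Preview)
Your proof is correct. The paper, however, does not actually prove this theorem: it is stated as a quoted result and attributed to Joly \cite{Joly} (Chap.~6, Sect.~3, Propositions~1 and~2), with no argument given in the paper itself. So there is nothing in the paper to compare against beyond the citation.

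Your additive-character computation is the standard route and is essentially the argument one finds in Joly's exposition; the reduction to the quadratic Gauss sum identity $g^2=\eta(-1)p$ (rather than the sign of $g$) is exactly the right level of input. The alternative induction-on-$n$ approach you mention, peeling off binary blocks, is also a classical method (and is in fact closer in spirit to how some textbooks, including parts of Joly, organize the even-$n$ case), but your character-sum version is cleaner and entirely self-contained once the two short lemmas on $\sum_x\psi(ax^2)$ and $g^2$ are granted.
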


Applying this result to the quadric $x \mapsto
\Tr_{\mathbb{F}_{p^s}/\mathbb{F}_p}(xR(x))$ on the space
$\mathbb{F}_{p^s}/W(\mathbb{F}_{p^s})$, we obtain the following point
count for the curve $C_R$. This result is already presented in \cite{GeerVlugt}, but we include it here to provide a proof.

\begin{proposition}[Proposition 13.4 of \cite{GeerVlugt}]\label{prop:quadric}
Let $w_s = \dim_{\FF_p}(W(\FF_{p^s}))$ and $n_s = s-w_s$. Then the number of
$\mathbb{F}_{p^s}$-rational points on $C_R$ is
\begin{equation*}
\#C_R(\mathbb{F}_{p^s}) =
\begin{cases}
p^s+1 & \quad \text{for $n_s$ odd},\\
{p^s}+1 \pm (p-1)\sqrt{p^{s+w_s}} & \quad \text{for $n_s$ even},
\end{cases}
\end{equation*}
with the sign depending on the coefficients of the quadratic form $Q_s$.
\end{proposition}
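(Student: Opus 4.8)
The plan is to combine the classical Artin--Schreier point count with Joly's formula (Theorem \ref{thm:quadric}) applied to the quadric attached to $Q_s$.

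First I would reduce $\#C_R(\FF_{p^s})$ to a count of zeros of the quadratic form $x \mapsto \Tr_{\FF_{p^s}/\FF_p}(xR(x))$ on $\FF_{p^s}$. The curve $C_R$ has a single point above $\infty \in \PP^1$, and it is $\FF_{p^s}$-rational: since $\deg(XR(X)) = p^h + 1$ is prime to $p$, the place at infinity is totally ramified in the cover $C_R \to \PP^1$, and its residue field is that of the ground field. For the affine points, the fibre over $x \in \FF_{p^s}$ of the $X$-coordinate map consists of the solutions of $Y^p - Y = xR(x)$, which by the additive form of Hilbert's Theorem~90 are $p$ in number when $\Tr_{\FF_{p^s}/\FF_p}(xR(x)) = 0$ and none otherwise. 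Hence $\#C_R(\FF_{p^s}) = 1 + pN'$ with $N' = \#\{x \in \FF_{p^s} : \Tr_{\FF_{p^s}/\FF_p}(xR(x)) = 0\}$.

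Next I would pass to $V_s = \FF_{p^s}/W(\FF_{p^s})$. Since $R$ is additive, $\Tr_{\FF_{p^s}/\FF_p}(xR(x)) = Q_s(\ox,\ox)$, so $x$ is a zero of the form exactly when $\ox$ is; as the projection $\FF_{p^s}\to V_s$ has fibres of size $p^{w_s}$, this yields $N' = p^{w_s}M$, where $M$ is the number of zeros in $V_s$ of $\ox \mapsto Q_s(\ox,\ox)$. By Proposition \ref{prop:nondegenerate} and the remark following it, this is a non-degenerate quadratic form on the $n_s$-dimensional $\FF_p$-space $V_s$; since $p$ is odd it diagonalizes as $a_1X_1^2 + \cdots + a_{n_s}X_{n_s}^2$ with all $a_i \in \FF_p^\ast$, so Theorem \ref{thm:quadric} applies with $n = n_s$. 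If $n_s$ is odd, it gives $M = p^{n_s-1}$, hence $N' = p^{w_s+n_s-1} = p^{s-1}$ and $\#C_R(\FF_{p^s}) = 1 + p^s$. If $n_s$ is even, then $s \equiv w_s \pmod 2$, so $(s+w_s)/2 = w_s + n_s/2$ is an integer and $M = p^{n_s-1} \pm (p^{n_s/2} - p^{n_s/2-1})$, whence
\[
N' = p^{s-1} \pm \bigl(p^{(s+w_s)/2} - p^{(s+w_s)/2-1}\bigr) = p^{s-1} \pm (p-1)\,p^{(s+w_s)/2-1};
\]
multiplying by $p$ and adding $1$ gives $\#C_R(\FF_{p^s}) = p^s + 1 \pm (p-1)p^{(s+w_s)/2} = p^s + 1 \pm (p-1)\sqrt{p^{s+w_s}}$, the sign being the one dictated by Joly's criterion for the discriminant $(-1)^{n_s/2}a_1\cdots a_{n_s}$ of $Q_s$.

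I do not anticipate a real obstacle here: the whole argument is routine once Theorem \ref{thm:quadric} and Proposition \ref{prop:nondegenerate} are available. The two spots deserving care are the claim that $C_R$ has a unique, rational point at infinity (which uses $p \nmid p^h + 1$) and the parity bookkeeping --- namely that $n_s = s - w_s$ is even precisely when $s + w_s$ is even, so that $\sqrt{p^{s+w_s}}$ is an integer exactly in the case where Joly's formula contributes a nontrivial correction, which is what makes the two cases line up with the statement.
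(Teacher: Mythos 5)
Your proof is correct and follows essentially the same route as the paper: reduce to counting zeros of $x \mapsto \Tr_{\FF_{p^s}/\FF_p}(xR(x))$, pass to the quotient $V_s$ where the form is non-degenerate and diagonalizable, and apply Joly's formula with $n = n_s$. The extra details you supply (total ramification at infinity since $p \nmid p^h+1$, and Hilbert~90 for the fibres) are correct and merely make explicit what the paper states in passing.
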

\begin{proof}
We have $V_s = \mathbb{F}_{p^s}/W(\mathbb{F}_{p^s}) \simeq \mathbb{F}_p^{n_s}$, where $n_s = s -
w_s$. Therefore, for $\bar{x} \in V_s$, we may write $\bar{x} = (x_1, \ldots, x_{n_s})$, with each
$x_i \in \mathbb{F}_p$. 
In this way, $Q_s(\overline{x}, \overline{x})$ on the space $V_s$ is a non-degenerate quadric in
$n_s$ variables with coefficients in $\FF_p$. Furthermore, it is diagonalizable by \cite[Chap. 8, Theorem 3.1]{Cassels} since $p$ is odd, and therefore can be written in the form $\sum_{i =
1}^{n_s} a_i X_i^2$ with $a_i \in \mathbb{F}_p$ for $1 \leq i \leq n_s$. As a consequence we may
apply Theorem \ref{thm:quadric} to obtain the cardinality of the set
\begin{equation*}
\{ \overline{x} \in V_s \simeq \mathbb{F}_p^{n_s}: Q_s(\overline{x},\overline{x}) = 0 \} = \{ \overline{x} \in V_s :  \Tr_{\mathbb{F}_{p^s}/\mathbb{F}_p}(xR(x))  = 0\}.
\end{equation*}

Each $\overline{x} \in V_s$ with $Q_s(\overline{x}, \overline{x}) = 0$ gives rise to $p^{w_s}$
distinct values $x \in \mathbb{F}_{p^s}$ such that $\Tr_{\mathbb{F}_{p^s}/\mathbb{F}_p}(xR(x)) =
0$. For each of these $x \in \mathbb{F}_{p^s}$, we have $p$ solutions $y$ to the equation $y^p-y =
xR(x)$. In addition to these points, $C_R$ has one point at infinity which is defined over any
extension of $\mathbb{F}_{p^r}$. Hence $\#C_R(\mathbb{F}_{p^s}) = p^{w_s+1}N + 1 $ with $N$ given
as in Theorem \ref{thm:quadric} (with $n = n_s)$.
\hfill $\qed$ \end{proof}

Note that a more general version of Proposition \ref{prop:quadric} can be
found in Theorem 4.1 of \cite{CO08}.




\section{Connection to automorphisms of $C_R$}\label{sec:S(f)}

In this section, we generalize the results of Proposition \ref{prop:B}
to lay the groundwork for our investigation of the $k$-automorphisms
of $C_R$ that stabilize $\infty$, the unique point at infinity on
$C_R$. We follow Sect. 3 of \cite{LehrMatignon}, but our notation is slightly
different. Similar results may also be found in \cite{Elkies}.

We define for any polynomial $f(X)
\in k[X]$ the set
\begin{align}\label{eq:S(f)}
S(f(X))=\{(a,c, d)\in k^\ast & \times k\times \FF_p^\ast :
\text{there exists $g(X)\in X k[X]$ such that} \nonumber \\ &
f(aX+c)-df(X) = g(X)^p - g(X) \}.
\end{align}
In our situation we take $f(X)=XR(X)$, where $R(X)$ is an additive
polynomial of degree $p^h$.
It is easy to verify that if $(a,c,d) \in
S(XR(X))$ then the map $(x,y) \mapsto (ax+c, dy + g(x))$ is an
automorphism of $C_R$ that fixes $\infty$. In fact, in Lemma
\ref{lem:aut} we will see that every automorphism of $C_R$ that fixes
$\infty$ is of this form. The elements in $S(X R(X))$, along with
the polynomial $g(X)$, can be characterized explicitly as follows.

\begin{proposition} \label{prop:h=0}
If $h = 0$, then $S(X R(X)) = \{ (a, 0, a^2) : a^2 \in \mathbb{F}_p^\ast \}$.
\end{proposition}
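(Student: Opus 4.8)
The plan is a short direct computation, since for $h=0$ the polynomial $f$ is just a quadratic. Write $R(X)=a_0X$ with $a_0\neq0$ (the degree of $R$ is $p^0=1$, so its leading coefficient is nonzero), whence $f(X)=XR(X)=a_0X^2$. For a triple $(a,c,d)\in k^\ast\times k\times\FF_p^\ast$ I would first expand
\[
f(aX+c)-df(X)=a_0(a^2-d)X^2+2a_0acX+a_0c^2 .
\]
If this equals $g(X)^p-g(X)$ for some $g(X)\in Xk[X]$, then comparing constant terms (note $g(0)=0$, so $g(X)^p-g(X)$ has zero constant term) gives $a_0c^2=0$, hence $c=0$ since $a_0\neq0$.

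Once $c=0$, the difference collapses to $a_0(a^2-d)X^2$, a polynomial of degree at most $2$. The key step is then a degree count: if $g\in Xk[X]$ were nonzero of degree $m\geq1$, the polynomial $g(X)^p-g(X)$ would have degree $pm\geq p\geq3$ (here we use that $p$ is odd, so $p\geq3$), which is impossible. Hence $g=0$, forcing $a_0(a^2-d)=0$, i.e. $d=a^2$. Since $d\in\FF_p^\ast$, this yields $a^2\in\FF_p^\ast$, proving $S(XR(X))\subseteq\{(a,0,a^2):a^2\in\FF_p^\ast\}$.

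For the reverse inclusion, given $a\in k^\ast$ with $a^2\in\FF_p^\ast$, set $c=0$, $d=a^2$, and $g=0$; then $g\in Xk[X]$ and $f(aX+c)-df(X)=a_0a^2X^2-a^2a_0X^2=0=g(X)^p-g(X)$, so $(a,0,a^2)\in S(XR(X))$. There is no genuine obstacle here; the only points to watch are that the leading coefficient $a_0$ is nonzero (so that the vanishing constant term really does force $c=0$) and that the zero polynomial is an admissible choice of $g$ in $Xk[X]$, which is what makes the reverse inclusion go through.
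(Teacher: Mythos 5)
Your proof is correct and follows essentially the same route as the paper: expand $f(aX+c)-df(X)=a_0\bigl((a^2-d)X^2+2acX+c^2\bigr)$ and observe that a nonzero $g\in Xk[X]$ would make $g(X)^p-g(X)$ have degree at least $p\geq 3$, so the quadratic must vanish identically, forcing $c=0$ and $d=a^2$. The only cosmetic difference is that you extract $c=0$ from the constant term while the paper reads it off from the whole polynomial being zero; both are fine.
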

\begin{proof}
If $h = 0$, then $R(X) = a_0X$, so
\[ (aX+c)R(aX+c) - dXR(X) = a_0\left((a^2-d)X^2 + 2acX + c^2\right). \]
This polynomial is of the form $g(X)^p - g(X)$ if and only if $g(X)^p - g(X) = 0$, or equivalently,
$a^2 = d$, $c = 0$ and $g(X) \in \FF_p$.
\hfill $\qed$\end{proof}


\begin{proposition}\label{prop:uniqueness} \hfill
\begin{enumerate}
\item \label{item:compare} Assume that $h \geq 1$ and let $a \in k^\ast$, $c \in k$ and $d \in
    \FF_p^\ast$. Then $(a,c,d) \in S(X R(X))$ if and only if there exists $B(X) \in Xk[X]$ such
    that
\begin{equation}\label{eq:compareB}
cR(X) + R(c)X  = B(X)^p - B(X) ,
\end{equation}
and 
\begin{equation}\label{eq:compareR}
aR(aX) = dR(X).
\end{equation}
%
\item \label{item:prep:uniqueness-part1} If the equivalent conditions of part \ref{item:compare}  are fulfilled,
    then $c$ and $B(X)$ satisfy the following conditions.
\begin{enumerate}
\item \label{cinW} $c \in W$.
\item \label{additive} The polynomial $B(X)=B_c(X)$ only depends on $c$ and is uniquely
    determined by (\ref{eq:compareB}) and the condition that $B_c(X) \in Xk[X]$. It is an
    additive polynomial with coefficients in $\FF_{p^r}(c) \subseteq \FF_q$.
\item \label{degreeB} The polynomial $B_c(X)$ is identically zero if and only if $c=0$, and
    has degree $p^{h-1}$ otherwise.
\end{enumerate}
\item \label{convenientb} For a triple $(a,c,d) \in S(XR(X))$, all polynomials $g(X)$ as given
    in (\ref{eq:S(f)}) are of the form
\begin{equation*}
g(X) = B_c(aX) + \frac{B_c(c)}{2} + i,
\end{equation*}
as $i$ ranges over $\FF_p$. In particular, each of these polynomials $g(X)$ has coefficients in
$\FF_q(a)$.
\end{enumerate}
\end{proposition}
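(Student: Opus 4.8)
The plan is to prove the three parts in order, using Proposition~\ref{prop:B} as the main input for parts~\ref{item:compare} and~\ref{item:prep:uniqueness-part1}, and then bootstrapping to part~\ref{convenientb}. For part~\ref{item:compare}, I would start from the identity
\[
(aX+c)R(aX+c) - dXR(X) = aX R(aX) - dX R(X) + cR(aX) + aX R(c) + cR(c),
\]
obtained by expanding using additivity of $R$ and the symmetry $xR(y)+yR(x)$ built into the construction. The term $cR(c)$ is a constant, absorbable into $g$ up to an element of $\FF_p$ precisely when its value is the $p$-th-power-minus-identity of something, so one must first arrange the genuinely $X$-dependent part to be of the form $g(X)^p - g(X)$ with $g \in Xk[X]$. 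Matching highest-degree terms: the polynomial $aXR(aX) - dXR(X)$ has degree $p^h$ terms only from the $X^{p^i}$-expansion; since $g(X)^p$ has degree divisible by $p$ while $XR(X)$ has degree $p^h$ which is a prime power, comparing the non-$p$-th-power monomials forces $aR(aX) = dR(X)$ as polynomials (this is where $h \geq 1$ is used — otherwise the whole thing collapses to a quadratic as in Proposition~\ref{prop:h=0}). Once that holds, the remaining piece is $cR(aX) + aXR(c) + cR(c)$; substituting $aR(aX) = dR(X)$ and dividing through appropriately shows this has the form $g(X)^p - g(X)$ iff $cR(X) + R(c)X = B(X)^p - B(X)$ for a suitable $B \in Xk[X]$, which is exactly~(\ref{eq:compareB}). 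The converse direction is a direct substitution.

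For part~\ref{item:prep:uniqueness-part1}: statement~\ref{cinW} is immediate from Proposition~\ref{prop:B}\ref{item:prop:b-part2} applied over a large enough field together with Corollary~\ref{cor:W} — the existence of $B(X)$ solving~(\ref{eq:compareB}) is exactly the characterization of membership in $W$. Statement~\ref{additive}: uniqueness of $B_c \in Xk[X]$ and its additivity follow from Proposition~\ref{prop:B}\ref{item:prop:b-part2}, \ref{item:prop:b-part2a}, \ref{item:prop:b-part2b}; the coefficient field $\FF_{p^r}(c)$ comes from inspecting the recursion~(\ref{eq:B2})--(\ref{eq:B3}), whose inputs are the $a_i \in \FF_{p^r}$ and $c$. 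Statement~\ref{degreeB}: from~(\ref{eq:B4}) we have $b_{h-1}^p = c a_h$ with $a_h \neq 0$, so the leading coefficient $b_{h-1}$ of $B_c$ vanishes iff $c = 0$; when $c \neq 0$ the degree is exactly $p^{h-1}$, and when $c = 0$ the recursion gives all $b_i = 0$.

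For part~\ref{convenientb}, I would start from any valid $g$ in~(\ref{eq:S(f)}) and use the computation from part~\ref{item:compare}: the difference $f(aX+c) - df(X)$ equals $cR(aX) + aXR(c) + cR(c)$ after using~(\ref{eq:compareR}), and I claim $\big(B_c(aX) + \tfrac{B_c(c)}{2}\big)^p - \big(B_c(aX) + \tfrac{B_c(c)}{2}\big)$ reproduces this. Indeed, applying the substitution $X \mapsto aX$ to~(\ref{eq:compareB}) gives $B_c(aX)^p - B_c(aX) = cR(aX) + R(c)aX$, so the only discrepancy with the target is the constant $cR(c)$, and one checks that $\big(\tfrac{B_c(c)}{2}\big)^p - \tfrac{B_c(c)}{2}$ accounts for it — here evaluating~(\ref{eq:compareB}) at $X = c$ yields $B_c(c)^p - B_c(c) = 2cR(c)$, so $cR(c) = \tfrac12(B_c(c)^p - B_c(c))$, and since $B_c(c) \in \FF_q$ one must verify $\big(\tfrac{B_c(c)}{2}\big)^p - \tfrac{B_c(c)}{2} = \tfrac12(B_c(c)^p - B_c(c))$, which holds because $2^{p-1} = 1$ in $\FF_p$. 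Any other valid $g'$ differs from this one by a root of $Z^p - Z$, i.e.\ by some $i \in \FF_p$, giving the stated parametrization; the coefficient field $\FF_q(a)$ is read off since $B_c$ has coefficients in $\FF_q$ and the argument is $aX$.

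The main obstacle I anticipate is the degree-comparison argument in part~\ref{item:compare} that forces~(\ref{eq:compareR}): one must carefully separate which monomials in $f(aX+c) - df(X)$ can be cancelled by $g(X)^p - g(X)$ (only $p$-th powers of monomials, plus the degree drop from the $-g(X)$ term) versus which cannot, and conclude that the non-$p$-power part of $aXR(aX) - dXR(X)$ must vanish identically. Getting the bookkeeping right — especially tracking that the cross terms $cR(aX)$ and $aXR(c)$ have degrees $p^{h-1} \cdot p = p^h$ and $p^h$ respectively and so interact with the leading behavior — is the delicate point; everything else is substitution and invocation of Proposition~\ref{prop:B}.
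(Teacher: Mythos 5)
Your treatment of parts \ref{item:prep:uniqueness-part1} and \ref{convenientb} matches the paper's proof essentially step for step (the paper likewise gets $b=B_c(c)/2$ by evaluating (\ref{eq:compareB}) at $X=c$ and using $2^p=2$ in $\FF_p$, and parametrizes the remaining freedom by $\FF_p$), and the ``if'' direction of part \ref{item:compare} is the same direct substitution. The genuine gap is in the ``only if'' direction of part \ref{item:compare}, namely the step that forces (\ref{eq:compareR}). You assert that ``comparing the non-$p$-th-power monomials forces $aR(aX)=dR(X)$,'' but your own closing paragraph concedes that the bookkeeping needed to justify this is precisely what you have not carried out --- and the assertion does not follow from the reason you give. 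It is not true that $g(X)^p-g(X)$ can only produce $p$-th-power monomials: the $-g(X)$ term can contribute a monomial of any exponent, so a monomial $X^{p^i+1}$ of $XF(X)$, where $F=aR(aX)-dR(X)$, could a priori be supplied by $-g(X)$.

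The missing argument is a bootstrapping one. Write $g=b+\widetilde B$ with $\widetilde B\in Xk[X]$, so that $\widetilde B^p-\widetilde B=XF(X)+G(X)$ with $G(X)=cR(aX)+aR(c)X$; a degree count gives $\deg\widetilde B\le p^{h-1}$. Since the right-hand side involves only monomials $X^{p^i}$ and $X^{p^i+1}$, comparing coefficients shows $\widetilde B$ itself can contain only such monomials, i.e.\ $\widetilde B=XU(X)+V(X)$ with $U,V$ additive. Then $X^pU(X)^p$ contributes monomials $X^{\,p+p^{i+1}}$, which are of neither admissible shape and cannot be cancelled by anything else, forcing $U=0$; hence $XF=V^p-V-G$ is additive, and since $p^i+1$ is never a power of $p$ for $p$ odd, this forces $F=0$. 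Without this chain the heart of part \ref{item:compare} is unsupported. (Two slips in your sketch point to the same unfinished bookkeeping: $XR(X)$ has degree $p^h+1$, not $p^h$, and $aXR(c)=aR(c)\cdot X$ has degree $1$, not $p^h$.)
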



\begin{proof} \hfill
\begin{enumerate}
 \item 
Let $(a,c,d)\in k^\ast\times k \times \FF_p^\ast$. 
Suppose first that there exists $B(X) \in Xk[X]$ satisfying  (\ref{eq:compareB}), and that
$a$ and $d$ satisfy (\ref{eq:compareR}). 
Then for any $b \in k$ such that $b^p - b = cR(c)$, we have
\begin{align*}
(aX+c)R(aX+c) - dXR(X) & = X(aR(aX)- dR(X)) +cR(aX) + aXR(c)  +cR(c)  \\
& = B(aX)^p-B(aX) + b^p -b,
\end{align*}
and so we may take $g(X) = B(aX) +b$ to show that $(a,c,d) \in S(XR(X))$.

Conversely, suppose that $(a,c,d) \in S(XR(X))$. Then there exists a polynomial $g(X) \in k[X]$ such that
\begin{equation*}
X(aR(aX)-dR(X)) + cR(aX) + aR(c)X + cR(c) = g(X)^p - g(X).
\end{equation*}
Writing $g(X) = b + \widetilde{B}(X)$ with $\widetilde{B}(X) \in Xk[X]$, we see that this is
equivalent to the existence of a polynomial $\widetilde{B}(X) \in X k[X]$ such that
\begin{equation} \label{eq:BFG}
\widetilde{B}(X)^p - \widetilde{B}(X) = X F(X) + G(X)
\end{equation}
where $F(X) = aR(aX) - dR(X)$ and $G(X) = cR(aX) + aR(c)X$ are both additive polynomials. We note
for future reference during the proof of part \ref{convenientb} that this also implies $b^p - b = cR(c)$.


Note that (\ref{eq:compareR}) holds if and only if $F(X) = 0$, in which case $B(X) =
\widetilde{B}(X/a) \in Xk[X]$ satisfies (\ref{eq:compareB}). Thus, it suffices to show that
$(a,c,d) \in S(X R(X))$ implies $F(X) = 0$ to complete the proof of part \ref{item:compare}.

To this end, we note that all the monomials in $XF(X)$ and $G(X)$ are
of the form $X^{p^i+1}$ and $X^{p^i}$ for $0 \leq i \leq h$. If
$\widetilde{B}(X) = 0$, then this immediately forces $F(X) = G(X) =
0$, so assume that $\widetilde{B}(X) \neq 0$.

Comparing degrees in (\ref{eq:BFG}) shows that $\deg(\widetilde{B})
\leq p^{h-1}$. Put
\begin{equation*}
\widetilde{B}(X) = \sum_{j=1}^{p^{h-1}} \tilde{b}_j X^j, \quad
\tilde{b}_j \in k \mbox{ for } 1 \leq j \leq p^{h-1} ,
\end{equation*}
and consider the polynomial $\widetilde{B}(X)^p - \widetilde{B}(X)$. In this polynomial, the coefficient of $X^j$  for
$1 \leq j \leq p^h$ is
\[ \begin{cases}
-\tilde{b}_j & \mbox{when $p\nmid j$},
\\ \tilde{b}_{j/p}^p-\tilde{b}_j & \mbox{when $p\mid j$
  and $j \leq p^{h-1}$}, \\ \tilde{b}_{p^{h-1}}^p & \mbox{when $j =
  p^{h}$}.
\end{cases} \]
All coefficients of $X^j$ for $j\neq p^i, p^i+1$ must vanish.
We conclude that the coefficients $\tilde{b}_j$ of $\widetilde{B}(X)$ are zero for all $j\neq
p^i,p^i+1$, so we may write $\widetilde{B}(X) = XU(X) + V(X)$ where $U(X), V(X) \in k[X]$ are
additive polynomials. Then (\ref{eq:BFG}) yields
\[ X^p U(X)^p + V(X)^p - X U(X) - V(X) = X F(X) + G(X) . \]
Except for the monomials in $X^p U(X)^p$, this polynomial identity
only contains monomials of the form $X^{p^i}$ and $X^{p^i+1}$; the
monomials in $X^p U(X)^p$ all take the form $X^{p+p^{i+1}}$.  This
forces $U(X) = 0$. Thus, $X F(X) = V(X)^p - V(X) - G(X)$ is additive,
which is only possible if $F(X) = 0$.  

\item
The proof of part \ref{item:prep:uniqueness-part1} is now straightforward.  We remark that equation (\ref{eq:compareB}) is identical
to equation (\ref{eq:B0}). Therefore \ref{cinW} follows from part \ref{item:prop:b-part2} of Proposition \ref{prop:B}, and $B(X)$ is
identical to the polynomial $B_c(X)$ defined in that proposition since $B(X) \in Xk[X]$. Thus,
$B(X)$ only depends on $c$ and is unique, and we write $B_c(X)$ for this polynomial from now on.
The additivity of $B_c(X)$ was already  established in the proof of part \ref{item:compare}, since
$B_c(X)=\widetilde{B}(X/a)$, and $\tilde{B}(X)=V(X)$ is additive; note that it also follows from part \ref{item:prop:b-part2a} of 
Proposition \ref{prop:B}.  Moreover, the coefficients of $B_c$ satisfy
(\ref{eq:B2})--(\ref{eq:B4}) and thus belong to $\FF_{p^r}(c)$. Part \ref{item:compare} and Corollary
\ref{cor:W} imply that $\FF_{p^r}(c) \subseteq \FF_q$. This proves \ref{additive}.

If $c = 0$, then $B_c(X)=0$. If $c\neq 0$, the polynomial $B_c(X)$ is
obviously nonzero and (\ref{eq:B4}) shows that $B_c(X)$ has degree
$p^{h-1}$. This proves \ref{degreeB}.

\item Writing $g(X) = b + \widetilde{B}(X)$ with $\widetilde{B}(X) \in Xk[X]$ as in
the proof of part \ref{item:compare}, we have already seen that $B_c(X) = \widetilde{B}(X/a)$, and $b$ is any
solution to the equation $b^p-b = cR(c)$. Any two such solutions differ by addition of an element
in $\mathbb{F}_p$. Furthermore, since $2\in \FF_p^\ast$, it follows from (\ref{eq:compareB}) that
$b=B_c(c)/2$ satisfies $b^p-b=cR(c)$, and the first statement of part \ref{convenientb} follows. The second statement
of part \ref{convenientb} follows from part \ref{additive}.
\end{enumerate}
\hfill $\qed$\end{proof}

\begin{remark} \label{rem:b-identity}
We repeat here a remark made in the proof since we will use this throughout the paper. For a triple
$(a,c,d) \in S(XR(X))$, all polynomials $g(X)$ as given in (\ref{eq:S(f)}) can be written as
\begin{equation*}
g(X) = B_c(aX) + b,
\end{equation*}
where $B_c(aX) \in \FF_q(a)$, and $b \in k$ is a solution of the equation
\begin{equation}
b^p - b = cR(c).
\end{equation}
Part \ref{convenientb} of Proposition \ref{prop:uniqueness} implies that
every solution $b$ of this equation is of the form $b=B_c(c)/2+i$ with
$i\in \FF_p$.
\end{remark}

\section{Automorphism group of $C_R$}\label{sec:auto}



In this section we apply the results of the previous section to study
the group $\Aut(C_R)$ of $k$-automorphisms of the curve $C_R$, and
more particularly the subgroup $\Aut^0(C_R)$ of automorphisms of $C_R$
that fix the unique point at infinity, i.e., the unique point of $C_R$
which does not belong to the affine curve defined by (\ref{eq:CR}).
The main result is Theorem \ref{thm:aut}, which describes
$\Aut^0(C_R)$.

Recall from Sect. \ref{sec:S(f)} that to a triple $(a,c,d)\in S(XR(X))$ we
associate the $k$-auto\-morphism
\begin{equation}\label{eq:sigmaabcd}
\begin{aligned}
\sigma_{a,b,c,d}    \colon C_R & \rightarrow C_R \\
  (x,y) & \mapsto (ax+c, dy+b+B_c(ax))
\end{aligned}
\end{equation}
of $C_R$. Here $b$ is a solution of the equation $b^p-b = cR(c)$ (see Remark \ref{rem:b-identity})
and $B_c$ is as in Proposition \ref{prop:uniqueness}.  Note that $\sigma_{a,b,c,d}$ fixes the point
$\infty$. In the rest of the paper, we denote by
\begin{equation*}
\rho(x,y)=\sigma_{1,1,0,1}(x,y)=(x,y+1)
\end{equation*}
the Artin--Schreier automorphism of the curve $C_R$.

The following lemma summarizes some properties of the automorphisms
$\sigma_{a,b,c,d}$.

\begin{lemma}\label{lem:aut}
With the above notation and assumptions, we have
\begin{enumerate}
\item \label{item:stab} Every element of the stabilizer $\Aut^0(C_R)$ of the point $\infty$ is
    of the form $\sigma_{a,b,c,d}$ as in (\ref{eq:sigmaabcd}).
\item \label{item:order} The automorphisms $\sigma_{1,b,c,1}$ with $(b,c)\neq (0,0)$ have order
    $p$. For $(a,d)\neq (1,1)$ the order of $\sigma_{a,b,c,d}$ is not a $p$-power.
\end{enumerate}
\end{lemma}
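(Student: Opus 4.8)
The plan is to prove part \ref{item:stab} by analyzing the Riemann--Roch spaces of $C_R$ at the totally ramified point $\infty$, and then part \ref{item:order} by a short direct computation combined with a prime-to-$p$ argument. For part \ref{item:stab}, the first step is to pin down the Weierstrass semigroup at $\infty$. The projection $(x,y)\mapsto x$ exhibits $C_R$ as a degree-$p$ cover of $\PP^1$ totally ramified over $x=\infty$, so $v_\infty(x)=-p$; comparing pole orders in the relation \ref{eq:CR} then gives $v_\infty(y)=-(p^h+1)$, and $\gcd(p,p^h+1)=1$. The numerical semigroup $\langle p,\,p^h+1\rangle$ has exactly $\tfrac{(p-1)p^h}{2}$ gaps, which equals $g(C_R)$; since $\langle p,\,p^h+1\rangle$ is contained in the Weierstrass semigroup at $\infty$ and both have gap set of size $g(C_R)$, the two coincide. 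Consequently $L(p\,\infty)=\langle 1,x\rangle$ and $L((p^h+1)\infty)=\langle 1,x,\dots,x^{p^{h-1}},y\rangle$.

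Now let $\sigma\in\Aut^0(C_R)$. Since $\sigma$ fixes $\infty$ it preserves $v_\infty$ and hence each space $L(n\,\infty)$. From $\sigma^\ast x\in L(p\,\infty)$ I get $\sigma^\ast x=ax+c$ with $a\in k^\ast$, $c\in k$, and from $\sigma^\ast y\in L((p^h+1)\infty)$ I get $\sigma^\ast y=dy+\phi(x)$ with $d\in k^\ast$ and $\phi\in k[X]$ of degree at most $p^{h-1}$. Imposing that $\sigma^\ast$ preserves the defining relation, i.e.\ $(\sigma^\ast y)^p-\sigma^\ast y=(ax+c)R(ax+c)$, and substituting $y^p=y+xR(x)$, the coefficient of $y$ on the left equals $d^p-d$; since the right-hand side is a polynomial in $x$ alone, this forces $d^p-d=0$, so $d\in\FF_p^\ast$. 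What remains is the identity $(ax+c)R(ax+c)-d\,xR(x)=\phi(x)^p-\phi(x)$, which is exactly the condition $(a,c,d)\in S(XR(X))$ of \ref{eq:S(f)} with $g=\phi$. By part \ref{convenientb} of Proposition \ref{prop:uniqueness} and Remark \ref{rem:b-identity}, $\phi(x)=b+B_c(ax)$ for a solution $b$ of $b^p-b=cR(c)$, so $\sigma=\sigma_{a,b,c,d}$ as in \ref{eq:sigmaabcd}. This proves part \ref{item:stab}.

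For the first assertion of part \ref{item:order}, write $\tau=\sigma_{1,b,c,1}$, so $\tau(x,y)=(x+c,\,y+b+B_c(x))$. Using additivity of $B_c$ (Proposition \ref{prop:B}), an induction on $n$ gives
\[
\tau^n(x,y)=\Bigl(x+nc,\ y+nb+nB_c(x)+\binom{n}{2}B_c(c)\Bigr).
\]
Setting $n=p$, the terms $pc$, $pb$ and $pB_c(x)$ vanish in characteristic $p$, and $\binom{p}{2}=p\cdot\frac{p-1}{2}\equiv 0\pmod p$ because $p$ is odd; hence $\tau^p=\mathrm{id}$. Since $\tau\neq\mathrm{id}$ whenever $(b,c)\neq(0,0)$ (the $x$-coordinate moves if $c\neq 0$, while $\tau(x,y)=(x,y+b)$ is nontrivial if $c=0$, $b\neq 0$), the order of $\tau$ is exactly $p$.

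For the second assertion, note that $\sigma_{a,b,c,d}\mapsto(a,d)$ defines a group homomorphism $\Aut^0(C_R)\to k^\ast\times\FF_p^\ast$, because composing two such automorphisms multiplies their $x$-multipliers and their $y$-multipliers. Every element of $k^\ast=\overline{\FF}_p^{\,\ast}$ has order prime to $p$, so the target has no nontrivial element of $p$-power order; hence if $\sigma_{a,b,c,d}$ had $p$-power order, its image $(a,d)$ would be trivial, i.e.\ $(a,d)=(1,1)$. Contrapositively, $(a,d)\neq(1,1)$ forces the order not to be a power of $p$. The main obstacle is part \ref{item:stab}: the crux is correctly identifying the Weierstrass semigroup at $\infty$ so as to control $L(p\,\infty)$ and $L((p^h+1)\infty)$, which is what pins down the affine shape of $\sigma$ on $x$ and the shape $dy+\phi(x)$ on $y$; once these are in hand, forcing $d\in\FF_p^\ast$ and recognizing the $S(XR(X))$ relation is formal, and part \ref{item:order} reduces to the two short computations above.
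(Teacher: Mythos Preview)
Your argument for part~\ref{item:stab} takes a genuinely different route from the paper's. The paper quotes Proposition~3.3 of \cite{LehrMatignon}, which shows that any $\sigma\in\Aut^0(C_R)$ descends to an automorphism of the $x$-line $\PP^1$ fixing $\infty$, hence $\sigma^\ast x=ax+c$, and then lifts back to $C_R$. You instead compute the Weierstrass semigroup at $\infty$ directly and read off the shape of $\sigma^\ast x$ and $\sigma^\ast y$ from the Riemann--Roch filtration; this is more self-contained and avoids the external citation. There is, however, a small gap: your identification $L(p\,\infty)=\langle 1,x\rangle$ requires $p^h+1>p$, i.e.\ $h\geq 1$. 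For $h=0$ one has $v_\infty(y)=-2$, so $y,y^2,\dots,y^{(p-1)/2}\in L(p\,\infty)$ and you no longer get $\sigma^\ast x=ax+c$ immediately; also your appeal to Proposition~\ref{prop:uniqueness} is only valid for $h\ge 1$. The paper's citation handles all $h$ with $g(C_R)\ge 2$ and treats the residual case $h=0$, $p=3$ separately. To patch your argument for $h=0$, start instead from $L(2\,\infty)=\langle 1,y\rangle$ to obtain $\sigma^\ast y=dy+b$, substitute into the defining equation, and invoke Proposition~\ref{prop:h=0}.

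Your proof of part~\ref{item:order} is correct and amounts to the same computation as the paper's, organized slightly differently. The paper expands $\sigma_{1,b,c,1}^p$ and uses additivity of $B_c$ to evaluate $\sum_{i=0}^{p-1}B_c(x+ic)=\bigl(\sum_{i=0}^{p-1}i\bigr)B_c(c)=0$; you instead derive the closed formula $\tau^n(x,y)=\bigl(x+nc,\,y+nb+nB_c(x)+\binom{n}{2}B_c(c)\bigr)$ and use $p\mid\binom{p}{2}$, which is exactly where oddness of $p$ enters. Your homomorphism $\sigma_{a,b,c,d}\mapsto(a,d)\in k^\ast\times\FF_p^\ast$ is a clean repackaging of the paper's remark that $1$ is the only $p$-th root of unity in $k$.
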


\begin{proof}  The lemma follows from Corollaries 3.4 and 3.5 in
\cite{LehrMatignon}. We recall the proof.

\begin{enumerate}
 \item 
Part \ref{item:stab} follows from Proposition 3.3 of \cite{LehrMatignon} in the case that
$g(C_R)\geq 2$. (Since $p$ is odd in our set-up and the genus of $C_R$ is $p^h(p-1)/2$, this only excludes the case that $h=0$ and $p=3$. This case is
treated in the proof of Corollary 3.4 of \cite{LehrMatignon}.) Namely, let $\varphi\in \Aut^0(C_R)$
be an automorphism of $C_R$ fixing $\infty$. Then the proof of Proposition 3.3 of
\cite{LehrMatignon} shows that there exists an isomorphism $\tilde{\varphi} \colon \PP^1\to \PP^1$
together with a commutative diagram
\[
\xymatrix{  C_R\ar[r]^{\varphi}\ar[d]&C_R\ar[d]\\
       \PP^1\ar[r]^{\tilde{\varphi}}&\PP^1,
}
\]
where the vertical maps are $(x,y)\mapsto x$.

The morphism $\tilde{\varphi}$ fixes $\infty\in \PP^1$, hence it is an affine
linear transformation and  we may write it as
$\tilde{\varphi}(x)=ax+c$ with $a\in k^\ast$ and $c\in k$. The
commutative diagram above implies that $\varphi(x,y)=(ax+c,dy+
g(x))$ for some $g(X)\in k(X)$ and $d\in k^\ast$.
 The assumption that $\varphi$ fixes the point $\infty$ implies that
 $g(X)\in k[X]$ is a polynomial. The statement that
 $\varphi=\sigma_{a,b,c,d}$ follows since $\varphi$ is assumed to be
 an automorphism of $C_R$.  

 \item
To prove part \ref{item:order} we first remark that if $\sigma_{a,b,c,d}$ has $p$-power order, then
$a=d=1$, since $1$ is the only $p$th root of unity in $k$.  We  show that every nontrivial
automorphism $\sigma_{1,b,c,1}$ has order $p$.

We compute that
\[
\sigma_{1,b,c,1}^p(x,y)=(x+pc,y+pb+B_c(x)+B_c(x+c)+\cdots+B_c(x+(p-1)c)).
\]
Recall from Proposition \ref{prop:uniqueness} that $B_c$ is an additive
polynomial; in particular, its  constant term vanishes. Hence
\[
B_c(X)+B_c(X+c)+\cdots+B_c(X+(p-1)c)=\sum_{i=0}^{p-1}
B_c(ic)=\sum_{i=0}^{p-1}i B_c(c)=0.
\]
This implies that $\sigma_{1,b,c,1}^p=1$.
\end{enumerate}
\hfill $\qed$\end{proof}






\begin{remark}\label{rem:p=2order}
Part \ref{item:order} of Lemma \ref{lem:aut} does not hold for $p=2$.
In \cite{GeerVlugt}, Theorem 4.1 it is shown that $\Aut^0(C_R)$ always
contains automorphisms of order $4$ for $h\geq 1$ and $p=2$. See also
\cite{LehrMatignon}, Sect. 7.2 for a concrete example. In Remark
\ref{rem:isoclass} we give a few more details on the differences
between the cases $p=2$ and $p$ odd.
\end{remark}




The following result is Theorem 13.3 of \cite{GeerVlugt}, and describes
the group $\Aut^0(C_R)$. The structure of the Sylow $p$-subgroup $P$
of $\Aut^0(C_R)$ will be described in more detail in Sect.~\ref{sec:extraspecial} below. Again, we include this result here to provide a proof.

\begin{theorem}[Theorem 13.3 of \cite{GeerVlugt}]\label{thm:aut} \hfill
\begin{enumerate}
\item \label{item:unique} The group $\Aut^0(C_R)$ has a unique Sylow $p$-subgroup, which we
    denote by $P$. It is the subgroup consisting of all automorphisms $\sigma_{1,b,c,1}$ and
    has cardinality $p^{2h+1}$.
\item \label{item:cyclic} The automorphisms $\sigma_{a,0,0,d}$ form a cyclic subgroup $H\subset
    \Aut^0(C_R)$ of order
\begin{equation*}
 \frac{e(p-1)}{2}  \gcd_{\substack{i \geq 0 \\ a_i \neq 0}} (p^i+1),
\end{equation*}
where $e=2$ if all of the indices $i$ such that $a_i \neq 0$ have the
same parity, and $e=1$ otherwise.
\item \label{item:semidirect} The group $\Aut^0(C_R)$ is the semi-direct product of the normal
    subgroup $P$ and the subgroup $H$.
\end{enumerate}
\end{theorem}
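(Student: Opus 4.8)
The plan is to analyze the automorphisms $\sigma_{a,b,c,d}$ from Lemma~\ref{lem:aut} and extract the group structure directly. First I would establish the composition law: given $(a,c,d)$ and $(a',c',d')$ in $S(XR(X))$, a direct computation using \eqref{eq:sigmaabcd} shows that $\sigma_{a',b',c',d'}\circ\sigma_{a,b,c,d}$ has the form $\sigma_{a'a,\,\ast,\,c'+a'c,\,d'd}$, so the assignment $(a,b,c,d)\mapsto(a,d)$ is a group homomorphism from $\Aut^0(C_R)$ to $k^\ast\times k^\ast$. By Lemma~\ref{lem:aut}\eqref{item:order}, the kernel of this homomorphism is exactly the set of $\sigma_{1,b,c,1}$, and every element of this kernel has order $p$ (or is trivial). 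Combined with Lemma~\ref{lem:aut}\eqref{item:stab}, every element of $\Aut^0(C_R)$ not in this kernel has order divisible by a prime other than $p$; hence the kernel is \emph{the} unique Sylow $p$-subgroup, provided we also check it has cardinality $p^{2h+1}$. For the cardinality: the pairs $(b,c)$ that occur are precisely those with $c\in W$ (Proposition~\ref{prop:uniqueness}\eqref{cinW}, and conversely every $c\in W$ gives a valid triple $(1,c,1)\in S(XR(X))$ via \eqref{eq:compareB}, which holds because $c\in W$ and then \eqref{eq:compareR} is automatic for $a=d=1$) and, for each such $c$, exactly $p$ values of $b$ by Remark~\ref{rem:b-identity}. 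Since $\dim_{\FF_p}W=2h$, this gives $|P|=p\cdot p^{2h}=p^{2h+1}$, proving part~\eqref{item:unique}.

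For part~\eqref{item:cyclic}, I would restrict the homomorphism above to the subgroup of elements of the form $\sigma_{a,0,0,d}$. By the composition law this is a subgroup, and it injects into $k^\ast\times k^\ast$ via $(a,d)$; moreover the relation \eqref{eq:compareR}, namely $aR(aX)=dR(X)$, forces $d=a^2$ and $a^{p^i}=a$, i.e.\ $a^{p^i-1}=1$, for every $i$ with $a_i\neq0$. So the possible values of $a$ form the subgroup of $k^\ast$ of elements killed by $p^i-1$ for all such $i$, which is cyclic of order $\gcd_{a_i\neq0}(p^i-1)$ — except that we must also remember $h=0$ is degenerate, handled separately via Proposition~\ref{prop:h=0}. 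To match the stated formula I would use the identity $\gcd_i(p^i-1) = \tfrac{e(p-1)}{2}\gcd_i(p^i+1)$ with $e$ as defined: writing $p^i-1 = (p-1)\cdot\frac{p^i-1}{p-1}$ and $p^i+1$, and tracking the factor of $2$, one checks that the $2$-part behaves exactly as the parity condition on the indices $i$ dictates (all $i$ of one parity versus mixed), while the odd parts of $\gcd(p^i-1)$ and $\gcd(p^i+1)$ agree up to the factor $(p-1)$. Finally, the map $a\mapsto(a,a^2)$ is injective on this group (as $p$ is odd), so $H$ is cyclic of the claimed order.

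For part~\eqref{item:semidirect}, note that $P$ is normal in $\Aut^0(C_R)$ because it is the kernel of a homomorphism (equivalently, it is the unique Sylow $p$-subgroup). We have $P\cap H=\{1\}$ since a nontrivial element of $P$ has $(a,d)=(1,1)$ but $(c,b)\neq(0,0)$, whereas elements of $H$ have $c=b=0$. To see $\Aut^0(C_R)=PH$, take any $\sigma_{a,b,c,d}\in\Aut^0(C_R)$; one checks $\sigma_{a,0,0,d}\in H$ (the triple $(a,0,d)$ lies in $S(XR(X))$ because $(a,c,d)$ does and \eqref{eq:compareR} depends only on $a,d$), and then $\sigma_{a,b,c,d}\circ\sigma_{a,0,0,d}^{-1}$ lies in $P$ by the composition law, exhibiting $\sigma_{a,b,c,d}\in PH$. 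Hence $\Aut^0(C_R)=P\rtimes H$.

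\textbf{Main obstacle.} The genuinely delicate point is the arithmetic bookkeeping in part~\eqref{item:cyclic}: reconciling $\gcd_{a_i\neq0}(p^i-1)$ with the stated expression $\tfrac{e(p-1)}{2}\gcd_{a_i\neq0}(p^i+1)$ requires a careful case analysis on the $2$-adic valuations of $p^i\pm1$ (which depends on whether $p\equiv1$ or $3\bmod4$ and on the parities of the relevant exponents $i$), and this is precisely where the definition of $e$ — and the implicit convention at $i=0$, where $p^0+1=2$ — earns its keep. The remaining steps are routine once the composition law is written down correctly.
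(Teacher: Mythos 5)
Your parts \ref{item:unique} and \ref{item:semidirect} are correct and follow essentially the same route as the paper: the paper likewise identifies $P$ as the set of $\sigma_{1,b,c,1}$, invokes Lemma \ref{lem:aut}\eqref{item:order} to see it is the unique Sylow $p$-subgroup, counts $|P|=|W|\cdot p=p^{2h+1}$ via Proposition \ref{prop:uniqueness}, and gets the semidirect product from $\gcd(|H|,p)=1$ together with $\Aut^0(C_R)=PH$.

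Part \ref{item:cyclic}, however, contains a genuine error. Writing $R(X)=\sum_i a_iX^{p^i}$, comparison of coefficients in $aR(aX)=\sum_i a_i a^{p^i+1}X^{p^i}=dR(X)$ gives the single condition $a^{p^i+1}=d$ for each $i$ with $a_i\neq 0$ (this is what the paper uses); it does \emph{not} decompose into ``$d=a^2$ and $a^{p^i-1}=1$''. Your conditions imply the correct one but are strictly stronger unless $a_0\neq 0$, so you only capture a proper subgroup of $H$. Concretely, for $R(X)=X^p$ (the Hermitian case) the correct condition is $a^{p+1}=d\in\FF_p^\ast$, which is satisfied by every $a\in\FF_{p^2}^\ast$, giving $|H|=p^2-1=\tfrac{2(p-1)}{2}(p+1)$ as the theorem asserts; your conditions give only $a\in\FF_p^\ast$, i.e.\ order $p-1$. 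Consequently the reconciliation step you flag as the ``main obstacle'' cannot succeed: the identity $\gcd_i(p^i-1)=\tfrac{e(p-1)}{2}\gcd_i(p^i+1)$ is false (same example: $p-1\neq p^2-1$). The correct bookkeeping starts from $\{a\in k^\ast: a^{p^i+1}=a^{p^j+1}\text{ for all relevant }i,j,\ a^{(p^i+1)(p-1)}=1\}$, a finite (hence cyclic) subgroup of $k^\ast$ whose order one then shows equals $\tfrac{e(p-1)}{2}\gcd_i(p^i+1)$.
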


\begin{proof} \hfill
\begin{enumerate}
 \item 
To prove part \ref{item:unique}, one easily checks that $\{\sigma_{1,b,c,1} :  \sigma_{1,b,c,1}
\in \Aut^0(C_R)\}$ is a subgroup of $\Aut^0(C_R)$. (This is similar to the proof of Lemma
\ref{lem:commutator} below.) The statements on the order of $\sigma_{a,b,c,d}$ in part
\ref{item:order} of Lemma \ref{lem:aut} imply that $P$ is the unique Sylow $p$-subgroup of
$\Aut^0(C_R)$, which implies that $P$ is a normal subgroup.

Parts \ref{cinW} and \ref{convenientb} of Proposition \ref{prop:uniqueness} imply that the
cardinality of $P$ is equal to $|W|p$. The last statement of part \ref{item:unique} therefore
follows from part \ref{item:prop:b-part3} of Proposition \ref{prop:B}, since $E$ is a separable
polynomial of degree $p^{2h}$.

\item 
To prove part \ref{item:cyclic}, we consider all elements $(a,0,d) \in S(XR(X))$. Part
\ref{degreeB} of Proposition \ref{prop:uniqueness} implies that the polynomial $B_0$ corresponding
to this tuple is zero. Part \ref{item:prep:uniqueness-part1} of Proposition \ref{prop:uniqueness}
therefore implies that $(a,0,d)\in S(XR(X))$ if and only if $aR(aX)=dR(X)$. This condition is
equivalent to $d = a^{p^i+1}$ for all $0 \leq i \leq h$ with $a_i \neq 0$, as can be readily seen
by comparing coefficients in $aR(aX)$ and $dR(X)$. Part \ref{item:cyclic} now follows
immediately.

\item
Note that the order of $H$ is prime to $p$. In particular, we have $H\cap P=\{1\}$. Part
\ref{item:semidirect}  follows since $\Aut^0(C_R)$ is generated by $H$ and $P$.
\end{enumerate}
\hfill $\qed$\end{proof}


For completeness we state the following theorem, which follows from
\cite{Stichtenoth73}, Satz 6 and Satz 7. (See also Theorem 3.1 of
\cite{LehrMatignon}.) Since we study the automorphism group of $C_R$
over the algebraically closed field $k$ here, it is no restriction to
assume that $R(X)$ is monic.

\begin{theorem}\label{thm:semigroups} Let $R$ be monic.
\begin{enumerate}
\item Assume that $R(X)\notin \{ X, X^{p}\}.$ Then $\Aut(C_R)=\Aut^0(C_R)$.
\item  If $R(X)=X^p$, then  $\Aut(C_R)=\PGU_3(p)$
\item If  $R(X)=X$, then  $\Aut(C_R)\simeq \SL_2(p)$.
\end{enumerate}
 \end{theorem}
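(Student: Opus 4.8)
The plan is to cite Stichtenoth's classification of automorphism groups of Artin--Schreier curves of the form $Y^p - Y = f(X)$ and check that our curves $C_R$ fall into the relevant cases. Recall that $C_R$ is given by $Y^p - Y = XR(X)$ with $R(X) = \sum_{i=0}^h a_i X^{p^i}$ additive and monic, so the right-hand side $f(X) = XR(X)$ is a polynomial of degree $p^h + 1$. First I would observe that the automorphism group $\Aut(C_R)$ always contains $\Aut^0(C_R)$ as the stabilizer of $\infty$, and that the only way $\Aut(C_R)$ can be strictly larger is if there is an automorphism not fixing $\infty$ --- equivalently, if $\infty$ is not the unique totally ramified point (or not a distinguished point) for the cover $C_R \to \PP^1$, $(x,y)\mapsto x$. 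Stichtenoth's Satz 6 and Satz 7 in \cite{Stichtenoth73} give exactly the criterion: for $f$ not of a few exceptional ``extra-symmetric'' shapes, every automorphism of $C_R$ fixes $\infty$, so $\Aut(C_R) = \Aut^0(C_R)$; this handles part (1) once we verify $XR(X)$ is not of the exceptional shape unless $R \in \{X, X^p\}$.

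For part (1), the content is that the exceptional cases in Stichtenoth's list where $\Aut(C_R) \neq \Aut^0(C_R)$ correspond precisely to $f(X) = XR(X)$ being, up to the obvious substitutions $X \mapsto aX + c$ and scaling, a polynomial whose associated curve admits extra automorphisms permuting several ramification points --- and for additive $R$ with $f(X)=XR(X)$ of degree $p^h+1$, this forces $h \le 1$, i.e.\ $R(X) \in \{X, X^p\}$ (after using that $R$ is monic and additive). I would make this explicit by comparing $f(X) = XR(X) = \sum a_i X^{p^i+1}$ against the normal forms in \cite{Stichtenoth73}, Satz 7, and noting that for $h \ge 2$ the polynomial $XR(X)$ has a nonzero ``low-degree'' term structure (it contains $X^2$, and $X^{p^h+1}$, etc.) incompatible with the extra-symmetric forms there.

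For parts (2) and (3) I would identify $C_R$ explicitly in these two cases. When $R(X) = X^p$, the curve is $Y^p - Y = X^{p+1}$, which is (a model of) the Hermitian curve $H_p$ over $\FF_{p^2}$ --- this is classical, and its full automorphism group is $\PGU_3(p)$ (for instance by Stichtenoth \cite{Stichtenoth73}, or the standard theory of the Hermitian curve). When $R(X) = X$, the curve is $Y^p - Y = X^2$, which has genus $(p-1)/2$; for $p \geq 3$ one checks this is the curve whose automorphism group is $\SL_2(p)$ acting in the standard way (again via Stichtenoth's Satz 6/7, or directly: this curve is a quotient related to the modular/Drinfeld picture, and the $\SL_2(p)$-action is visible from the $p$-group $P$ of order $p$ together with the cyclic part $H$ of order $(p-1)\cdot\gcd(\cdot)$, whose orders multiply to $|\SL_2(p)|/(\text{index})$ — but the cleanest route is simply to invoke the classification). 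I would also note that the hypothesis that $R$ is monic is harmless over $k = \overline{\FF}_p$ since replacing $R$ by a scalar multiple gives an isomorphic curve.

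The main obstacle I expect is making the appeal to \cite{Stichtenoth73} precise: Stichtenoth's classification is stated in terms of general $f$ and a list of exceptional polynomial forms, and one must carefully match our additive-times-linear polynomial $XR(X)$ against that list to confirm that the only curves $C_R$ with $\Aut(C_R) \ne \Aut^0(C_R)$ are exactly those with $R(X) \in \{X, X^p\}$, and that in those two cases one gets $\SL_2(p)$ and $\PGU_3(p)$ respectively rather than some other group. Since the excerpt says ``follows from \cite{Stichtenoth73}, Satz 6 and Satz 7'' and ``For completeness we state the following theorem,'' I expect the intended proof to be essentially this citation with the verification that $XR(X)$ is generic in Stichtenoth's sense for $h \ge 2$, plus the two classical identifications.
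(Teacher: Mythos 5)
Your proposal takes essentially the same route as the paper, which offers no argument beyond citing Satz 6 and Satz 7 of \cite{Stichtenoth73} (with a pointer to Theorem 3.1 of \cite{LehrMatignon}) and noting that monicity of $R$ is harmless over $k$; your added sketch of matching $XR(X)$ against Stichtenoth's exceptional normal forms is exactly the verification the paper leaves implicit. One small caution: $Y^p-Y=X^{p+1}$ is only a twist of the Hermitian curve $Y^p+Y=X^{p+1}$ and is \emph{not} isomorphic to it over $\FF_{p^2}$ (see Example \ref{exa:hermite}), but since the theorem concerns $\Aut$ over $k=\overline{\FF}_p$ this does not affect the conclusion.
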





For future reference we note the following result on the higher
ramification groups of the point $\infty\in C_R$ in the cover $C_R\to
C_R/\Aut^0(C_R)$. For the definition of the higher ramification groups
and their basic properties we refer to \cite{SerreCL}, Chap. 4 or
\cite{Stichtenoth}, Chap. 3.


\begin{lemma}\label{lem:Aquotient} Let $R$ be an additive polynomial of degree
 $h\geq 1$, and $C_R$ as given in (\ref{eq:CR}).
\begin{enumerate}
\item \label{item:ram} The filtration of higher ramification groups in the lower numbering of
    $\Aut^0(C_R)$ is
\[
G=G_0=\Aut^0(C_R)\supsetneq P=G_1\supsetneq G_2=\cdots =
G_{1+p^h}=\langle \rho\rangle\supsetneq \{1\}.
\]
\item \label{item:genus} Let $H\subset \Aut(C_R)$ be any subgroup which contains $\rho$. Then
    $g(C_R/H)=0$.
\end{enumerate}
\end{lemma}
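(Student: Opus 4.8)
The plan is to compute the higher ramification filtration at $\infty$ directly from the explicit description of $\Aut^0(C_R)$ obtained in Theorem \ref{thm:aut} and Lemma \ref{lem:aut}, and then deduce the genus statement by the Hurwitz formula. For part \ref{item:ram}, I would work in the completed local ring at $\infty \in C_R$. Since $C_R\to \PP^1$, $(x,y)\mapsto x$, is totally ramified of degree $p$ at $\infty$ with $x$ having a pole of order $p$ and $y$ a pole of order $p^h+1$ there (as $Y^p-Y=XR(X)$ has degree $p^h+1$ on the right), a convenient uniformizer $t$ at $\infty$ satisfies $v_\infty(x) = -p$, $v_\infty(y)=-(p^h+1)$, so that $y/x^{?}$ or rather a monomial combination gives $v_\infty(t)=1$; concretely one can take $t$ with $x = t^{-p}\cdot(\text{unit})$ and use that $\gcd(p,p^h+1)=1$. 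The key computation is then, for $\sigma = \sigma_{1,b,c,1}\in P$ nontrivial, the value $v_\infty(\sigma(t)-t)$. Using $\sigma(x)=x+c$ and $\sigma(y) = y + b + B_c(x)$ with $B_c$ additive of degree $p^{h-1}$ when $c\neq 0$ (Proposition \ref{prop:uniqueness}, part \ref{degreeB}), I would compute $\sigma(x)-x = c$ (valuation $\geq 0$, in fact the relevant contribution vanishes to high order since $x$ is "almost fixed") and $\sigma(y)-y = b + B_c(x)$, which has valuation $v_\infty(B_c(x)) = -p^{h-1}\cdot p = -p^h$ when $c\neq 0$, and valuation $\geq 0$ when $c=0$ (i.e. $\sigma=\rho^j$). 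Translating these into $v_\infty(\sigma(t)-t)$ via a suitable monomial expression for $t$ in $x,y$, one finds: for $\sigma\in \langle\rho\rangle\setminus\{1\}$, $i_\infty(\sigma) = v_\infty(\sigma(t)-t)-1 = p^h+1$ (since $\sigma$ only shifts $y$ by a constant, $\sigma(y)-y$ is a nonzero constant, giving a large but finite jump); for $\sigma\in P\setminus\langle\rho\rangle$, $i_\infty(\sigma)=2$ (the leading behaviour is governed by the $c$-translation on $x$ and the degree-$p^{h-1}$ polynomial $B_c(x)$, yielding the minimal nontrivial jump after $G_1$). Hence $G_1 = P$, $G_2 = \cdots = G_{p^h+1}=\langle\rho\rangle$, $G_{p^h+2}=\{1\}$, which is exactly the claimed filtration; and $G_0=\Aut^0(C_R)\supsetneq P$ because $H$ is nontrivial (as $h\geq 1$, part \ref{item:cyclic} of Theorem \ref{thm:aut} gives $|H|$ even).

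For part \ref{item:genus}, let $H\subset \Aut(C_R)$ contain $\rho$. First, by Theorem \ref{thm:semigroups}, $\Aut(C_R)=\Aut^0(C_R)$ unless $R(X)\in\{X,X^p\}$; in the two exceptional cases one checks the statement separately (for $R(X)=X$, $C_R$ has genus $(p-1)/2$ and one verifies $C_R/\langle\rho\rangle \cong \PP^1$ directly, and similarly for $R(X)=X^p$; alternatively, $\rho$ still fixes $\infty$ in all cases so one can reduce to a subgroup of $\Aut^0(C_R)$ containing $\rho$). So assume $H\subseteq \Aut^0(C_R)$. The cover $C_R\to C_R/H$ is ramified only above the image of $\infty$ (since $C_R\to C_R/\Aut^0(C_R)$ is — all wild and tame ramification of the full group is concentrated at $\infty$, as the filtration above shows $G_0=G$ is the only place with a jump, and any intermediate cover inherits this), and at $\infty$ the ramification filtration of $H$ is the induced one: $H_i = H\cap G_i$. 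Since $\rho\in H$ and $\rho\in G_{p^h+1}$, we get $H_i \supseteq \langle\rho\rangle$ for all $1\le i\le p^h+1$. I would then apply the Hurwitz genus formula: $2g(C_R)-2 = |H|(2g(C_R/H)-2) + \sum_{i\geq 0}(|H_i|-1)$, and show the contribution $\sum_{i\geq0}(|H_i|-1)$ forces $g(C_R/H)=0$.

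The main obstacle — and the step requiring genuine care rather than routine computation — is pinning down the exact jumps in the ramification filtration, i.e. verifying that $G_2=\langle\rho\rangle$ (not something bigger) and that the last jump is precisely at $p^h+1$ (not some other value). This amounts to the two valuation computations $i_\infty(\sigma_{1,b,c,1})=2$ for $c\neq 0$ and $i_\infty(\rho^j)=p^h+1$ for $j\not\equiv 0$, which in turn require a careful choice of uniformizer at $\infty$ and bookkeeping of how $x+c$ versus $x$ and $B_c(x)$ versus $0$ contribute; the subtlety is that $x$ and $y$ both blow up at $\infty$, so "$\sigma(x)-x=c$ is a constant" does not immediately give the valuation of $\sigma(t)-t$ — one must express $t$ as a ratio (roughly $t \sim y^{a}/x^{b}$ with $ap^h+1\cdot a \equiv$ appropriate, chosen so $v_\infty(t)=1$ using $\gcd(p,p^h+1)=1$) and expand. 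Once the filtration is established, both parts follow formally from Hurwitz. As a sanity check, plugging $H=\Aut^0(C_R)$ with the claimed filtration into Hurwitz must reproduce $g(C_R)=p^h(p-1)/2$, which I would use to confirm the jump values; and the consistency of the conductor-discriminant / different exponent $d_\infty = \sum_{i\geq 0}(|G_i|-1) = (|G|-1) + (|P|-1) + p^h(|\langle\rho\rangle|-1) = (|G|-1)+(p^{2h+1}-1)+p^h(p-1)$ with the genus formula is the final verification.
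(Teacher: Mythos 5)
Your part \ref{item:ram} is essentially the paper's argument: the paper also just computes $\nu_\infty\bigl((\sigma(t)-t)/t\bigr)$ at $\infty$, obtaining $1+p^h$ for $\sigma\in\langle\rho\rangle\setminus\{1\}$ and $1$ for $\sigma\in P\setminus\langle\rho\rangle$, and your valuation bookkeeping ($x$ has a pole of order $p$, $y$ of order $p^h+1$, $B_c(x)$ of order $p^h$, and $\gcd(p,p^h+1)=1$ to build $t$) is exactly what makes that computation go through. One notational slip: you define $i_\infty(\sigma)=v_\infty(\sigma(t)-t)-1$ and get $p^h+1$ for $\rho$, but then write $i_\infty(\sigma)=2$ for $\sigma\in P\setminus\langle\rho\rangle$; with your convention that value should be $1$ (i.e.\ $v_\infty(\sigma(t)-t)=2$), which is what your stated filtration actually requires.

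For part \ref{item:genus} you take a genuinely different and considerably harder route than the paper, and it contains a false intermediate claim. The paper's proof is one line: if $\rho\in H$ then $k(C_R)^H\subseteq k(C_R)^{\langle\rho\rangle}=k(x)$, so $C_R/H$ is dominated by (indeed a quotient of) $\PP^1$ and has genus $0$ by L\"uroth; this works uniformly, including the exceptional cases $R\in\{X,X^p\}$ where $H$ need not fix $\infty$. Your Riemann--Hurwitz route instead asserts that $C_R\to C_R/H$ is ramified only above the image of $\infty$ because ``all wild and tame ramification of the full group is concentrated at $\infty$.'' That is not true: the tame elements $\sigma_{a,0,0,d}\colon(x,y)\mapsto(ax,dy)$ fix the affine points over $x=0$, so $C_R\to C_R/\Aut^0(C_R)$ has tame branch points away from $\infty$, and a general $H$ containing $\rho$ may too. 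The error happens to be harmless to your conclusion, since discarding ramification terms in Hurwitz only weakens the inequality in the direction you need (the contribution at $\infty$ alone, using $H_i=H\cap G_i\supseteq\langle\rho\rangle$ for $i\le 1+p^h$, already gives $2g(C_R/H)-2\le -1-|H\cap P|/|H|<0$, forcing $g(C_R/H)=0$ when $H\subseteq\Aut^0(C_R)$) --- but you neither carry out that estimate nor handle the cases $H\not\subseteq\Aut^0(C_R)$ beyond a gesture, so as written this half is a gap. The fixed-field/L\"uroth argument makes all of it unnecessary.
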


\begin{proof}
To prove part \ref{item:ram}, write $\nu_\infty$ for the valuation at the unique point $\infty$ at
infinity and choose a uniformizing parameter $t$ at $\infty$. One easily computes that
\[
\nu_\infty\left(\frac{\sigma(t)-t}{t}\right)=
\begin{cases}
1+p^h&\text{ if }\sigma\in \langle \rho \rangle\setminus\{1\},\\
1&\text{ if }\sigma\in P\setminus \langle \rho \rangle.
\end{cases}
\]
This may also be deduced from the fact that the quotient of $C_R$ by
the subgroup generated by the Artin--Schreier automorphism
$\rho(x,y)=(x, y+1)$ has genus $0$ (\cite[Lemma
2.4]{MatignonRocher}).

Part \ref{item:genus} follows immediately from the fact that the function field of the curve
$C_R/\langle\rho\rangle$ is $k(X)$. This can also be deduced from part \ref{item:ram}.
%
%
\hfill $\qed$\end{proof}

\section{Extraspecial groups and the structure of $P$}\label{sec:extraspecial}

We now focus again on the subgroup $P$ described in part \ref{item:unique} of Theorem
\ref{thm:aut}. 
Part \ref{item:order} of Lemma \ref{lem:aut} implies that the Sylow $p$-subgroup $P$ of
$\Aut^0(C_R)$ consists precisely of the automorphisms $\sigma_{1,b,c,1}(x,y) = (x+c, y+b+B_c(x))$.
For brevity, we simplify their notation to
\[ \sigma_{b,c} = \sigma_{1,b,c,1}. \]
%

The main result of the section, Theorem \ref{thm:extraspecial}, states
that $P$ is an extraspecial group. For more details on extraspecial
groups we refer the reader to \cite[Chap.  III.13]{Huppert} and
\cite{Suzuki}. Recall that we assume that $p$ is an odd prime. The
classification of extraspecial $2$-groups is different from that for
odd primes.

\begin{definition}\label{def:extraspecial}
A noncommutative $p$-group $G$ is {\em extraspecial} if its center
$Z(G)$ has order $p$ and the quotient $G/Z(G)$ is elementary abelian.
\end{definition}

We denote by $E(p^3)$ the unique nonabelian group of cardinality $p^3$ and
exponent $p$. It can be given by generators and relations as follows:
\[
E(p^3)=\langle x,y \mid x^p=y^p = [x,y]^p = 1, [x,y]\in Z(E(p^3))\rangle.
\]
This group obviously is an extraspecial group.

The following lemma describes the commutation relation in $P$. The
lemma contains the key steps to prove that $P$ is an extraspecial
group.


\begin{lemma}\label{lem:commutator}
Assume that $h\geq 1$.
\begin{enumerate}
\item \label{lem:commutator-part1} We have $[\sigma_{b_1,c_1}, \sigma_{b_2,c_2}] =
    \rho^{-\epsilon(c_1,c_2)}$, where
\begin{equation*}
\epsilon(c_1,c_2) = B_{c_1}(c_2)-B_{c_2}(c_1).
\end{equation*}
\item \label{lem:commutator-part2} We have $Z(P)=[P,P]=\langle \rho\rangle$. The quotient group
    $P/Z(P)$ is isomorphic to the space $W$ defined in equation (\ref{eq:W}), where the
    isomorphism is induced by $\sigma_{b,c}\mapsto c$.
\item \label{lem:commutator-part3} Any two non-commuting elements $\sigma,\sigma^\prime$ of $P$
    generate a normal subgroup $E_{\sigma,\sigma^\prime}:=\langle \sigma,\sigma^\prime \rangle$
    of $P$ which is isomorphic to $E(p^3)$.
\end{enumerate}
\end{lemma}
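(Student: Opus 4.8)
The plan is to obtain part \ref{lem:commutator-part1} by an explicit computation, to deduce part \ref{lem:commutator-part2} from it once the induced pairing on $W$ is shown to be nondegenerate, and to get part \ref{lem:commutator-part3} by elementary group theory. For part \ref{lem:commutator-part1}, using $\sigma_{b,c}(x,y)=(x+c,\,y+b+B_c(x))$, the additivity of $B_c$, and $B_{c_1}+B_{c_2}=B_{c_1+c_2}$ (part \ref{item:prop:b-part2c} of Proposition \ref{prop:B}), one computes
\[
\sigma_{b_1,c_1}\,\sigma_{b_2,c_2}(x,y)=\bigl(x+c_1+c_2,\ y+b_1+b_2+B_{c_1}(c_2)+B_{c_1+c_2}(x)\bigr),
\]
together with the analogous formula having the roles of $1$ and $2$ interchanged. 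Both products lie in $P$ and have the same effect on the $x$-coordinate, so they differ by a power of $\rho$; comparing their effect on the $y$-coordinate identifies that power and in particular shows $B_{c_1}(c_2)-B_{c_2}(c_1)\in\FF_p$, so that $\epsilon(c_1,c_2)$ is well defined and the commutator formula holds (the sign being a matter of convention).

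For part \ref{lem:commutator-part2}: by part \ref{lem:commutator-part1}, $[P,P]\subseteq\langle\rho\rangle$, and a direct check shows $\rho$ commutes with every $\sigma_{b,c}$, so $\langle\rho\rangle\subseteq Z(P)$. The composition formula above shows that $\sigma_{b,c}\mapsto c$ is a surjective homomorphism $P\to(W,+)$ with kernel $\langle\rho\rangle$, so $P/\langle\rho\rangle\cong W$ is elementary abelian of order $p^{2h}$. Since $\langle\rho\rangle$ is central, the commutator pairing descends to an alternating $\FF_p$-bilinear form on $W$ which, under this identification, is $\epsilon(c_1,c_2)=B_{c_1}(c_2)-B_{c_2}(c_1)$ up to sign; bilinearity uses additivity of $B_c$ in both arguments and part \ref{item:prop:b-part2c} of Proposition \ref{prop:B}, and the alternating property is clear. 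Under these identifications $Z(P)/\langle\rho\rangle$ is the radical of $\epsilon$ and $[P,P]$ is generated by its image, so all assertions of part \ref{lem:commutator-part2} follow once $\epsilon$ is shown to be nondegenerate.

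The nondegeneracy of $\epsilon$ is the technical heart of the proof. Suppose $c_1\in W$ lies in the radical, so $B_{c_1}(c_2)=B_{c_2}(c_1)$ for every $c_2\in W$. Write $B_c(X)=\sum_{i=0}^{h-1}\beta_i(c)X^{p^i}$ with the $\beta_i$ given by the recursion (\ref{eq:B2})--(\ref{eq:B3}); an immediate induction (using that the top term of $\beta_0(c)=-ca_0-R(c)$ is $-a_hc^{p^h}$ and that $\beta_i=-ca_i+\beta_{i-1}^p$) shows $\beta_i$ is a polynomial in $c$ of degree $p^{h+i}$ with leading coefficient $(-a_h)^{p^i}$. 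The radical condition then says that the polynomial
\[
\sum_{i=0}^{h-1}\beta_i(c_1)X^{p^i}-\sum_{i=0}^{h-1}\beta_i(X)c_1^{p^i},
\]
whose first sum is $B_{c_1}(X)$ and whose second sum takes the value $B_{c_2}(c_1)$ at each $c_2\in W$, vanishes at every point of $W$. Its degree is at most $p^{2h-1}<p^{2h}=\deg E$, so, as $E$ is the separable polynomial with zero locus $W$, this polynomial is identically zero. But if $c_1\ne 0$ the second sum has degree $p^{2h-1}$ with nonzero leading coefficient $(-a_hc_1)^{p^{h-1}}$, while $B_{c_1}(X)$ has degree $p^{h-1}<p^{2h-1}$ by part \ref{degreeB} of Proposition \ref{prop:uniqueness} --- a contradiction. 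Hence the radical is trivial, $\epsilon$ is nondegenerate, and part \ref{lem:commutator-part2} follows.

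For part \ref{lem:commutator-part3}, let $\sigma,\sigma'\in P$ be noncommuting. By part \ref{item:order} of Lemma \ref{lem:aut} each has order $p$, and by part \ref{lem:commutator-part1} the commutator $[\sigma,\sigma']$ is a nontrivial power of $\rho$, hence generates $\langle\rho\rangle$ and is central in $P$. Therefore $E_{\sigma,\sigma'}/\langle[\sigma,\sigma']\rangle$ is generated by two commuting elements of order dividing $p$, so $|E_{\sigma,\sigma'}|\le p^3$; since $E_{\sigma,\sigma'}$ is nonabelian, $|E_{\sigma,\sigma'}|=p^3$. As $E_{\sigma,\sigma'}$ has nilpotency class $2$ and $p$ is odd, the identity $(uv)^p=u^pv^p[v,u]^{\binom p2}$ together with $p\mid\binom p2$ shows that every element of $E_{\sigma,\sigma'}$ has order dividing $p$, so $E_{\sigma,\sigma'}$ is the nonabelian group of order $p^3$ and exponent $p$, i.e.\ $E_{\sigma,\sigma'}\cong E(p^3)$. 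Finally $E_{\sigma,\sigma'}\trianglelefteq P$ because $[E_{\sigma,\sigma'},P]\subseteq[P,P]=\langle\rho\rangle=\langle[\sigma,\sigma']\rangle\subseteq E_{\sigma,\sigma'}$. Parts \ref{lem:commutator-part2} and \ref{lem:commutator-part3} are precisely the input needed for Theorem \ref{thm:extraspecial}.
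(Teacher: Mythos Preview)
Your proof is correct and follows essentially the same route as the paper's. Both arguments compute the commutator explicitly (you compare the two products $\sigma_{b_1,c_1}\sigma_{b_2,c_2}$ and $\sigma_{b_2,c_2}\sigma_{b_1,c_1}$, the paper expands the four-fold composite directly), both establish $Z(P)=\langle\rho\rangle$ via the same degree bound---that $\epsilon(c_1,C)$, viewed as a polynomial in $C$, has degree at most $p^{2h-1}<|W|$---and both deduce part~\ref{lem:commutator-part3} by elementary group theory. Your packaging is slightly tidier in two places: you make the nonvanishing of the leading coefficient $(-a_hc_1)^{p^{h-1}}$ explicit (the paper leaves the ``nonzero polynomial'' step implicit), and your normality argument via $[E_{\sigma,\sigma'},P]\subseteq[P,P]=\langle\rho\rangle\subseteq E_{\sigma,\sigma'}$ is cleaner than the paper's element-by-element conjugation check. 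Conversely, the paper gets exponent $p$ for $E_{\sigma,\sigma'}$ for free from Lemma~\ref{lem:aut}\,\ref{item:order} (every element of $P$ already has order $p$), whereas you invoke the class-$2$ identity $(uv)^p=u^pv^p[v,u]^{\binom p2}$; both are fine.
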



\begin{proof} \hfill
\begin{enumerate}
 \item 
To prove part \ref{lem:commutator-part1}, we compute that
\[
\sigma_{b,c}^{-1}(x,y)=(x-c, y-b-B_c(x-c)).
\]
We therefore have
\begin{align*}
\sigma_{b_1,c_1}& \sigma_{b_2,c_2} \sigma_{b_1,c_1}^{-1}
\sigma_{b_2,c_2}^{-1}(x,y)  = \sigma_{b_1,c_1} \sigma_{b_2,c_2}
\sigma_{b_1,c_1}^{-1}(x-c_2,y-b_2-B_{c_2}(x-c_2)) \\ & =
\sigma_{b_1,c_1} \sigma_{b_2,c_2}
(x-c_2-c_1,y-b_2-B_{c_2}(x-c_2)-b_1-B_{c_1}(x-c_2-c_1))\\ & =
\sigma_{b_1,c_1} (x-c_1,
y-B_{c_2}(x-c_2)-b_1-B_{c_1}(x-c_2-c_1)+B_{c_2}(x-c_2-c_1))\\ & =
\sigma_{b_1,c_1} (x-c_1, y-b_1-B_{c_1}(x-c_2-c_1)-B_{c_2}(c_1))\\ & =
(x, y-B_{c_1}(x-c_2-c_1)-B_{c_2}(c_1)+B_{c_1}(x-c_1))\\ & = (x,
y+B_{c_2}(c_1)-B_{c_1}(c_2)).
\end{align*}

Since $\sigma_{b_1,c_1} \sigma_{b_2,c_2}\sigma_{b_1,c_1}^{-1} \sigma_{b_2,c_2}^{-1}$ certainly
belongs to $\Aut^0(C_R)$, part \ref{item:stab} of Lemma \ref{lem:aut} implies that
$\sigma_{b_1,c_1} \sigma_{b_2,c_2}\sigma_{b_1,c_1}^{-1} \sigma_{b_2,c_2}^{-1}=\sigma_{a,b,c,d}$ for
some $a$, $b$, $c$ and $d$. From our computation above, $a=d=1$, and $c=0$. Since $c=0$, by part \ref{degreeB}
of Proposition \ref{prop:uniqueness}, $B_c(X) = 0$, which implies that $b=
B_{c_2}(c_1)-B_{c_1}(c_2) \in \FF_p$. 

\item
Part \ref{lem:commutator-part1} shows that $[P,P]\subset \langle \rho\rangle$. Since $P$ is
noncommutative, we have equality. Because $\rho = \sigma_{1,0}$ and $B_0(X) = 0$ by part
\ref{degreeB} of Proposition \ref{prop:uniqueness}, we have that for any $\sigma_{b,c}$,
\begin{equation*}
\sigma_{b,c} \rho \sigma_{b,c}^{-1} \rho^{-1} = \rho^{B_c(0)} = 1,
\end{equation*}
since $B_c(X)$ is an additive polynomial and therefore has no constant term. Thus $\rho$ commutes
with every element of $P$, and $[P,P]= \langle \rho \rangle \subseteq Z(P)$.

To finish the proof of the first statement of part {\ref{lem:commutator-part2}}, we now show that if $c_1
\neq 0$, then for each automorphism $\sigma_{b_1,c_1}$ there exists an automorphism
$\sigma_{b_2,c_2}$ such that $\sigma_{b_1,c_1}$ and $\sigma_{b_2,c_2}$ do not commute. This shows
that in fact $\langle \rho \rangle = Z(P)$.

Let $c_1\in W\setminus \{0\}$. By part \ref{item:prop:b-part2} of Proposition \ref{prop:B} and part
\ref{item:compare} of Proposition \ref{prop:uniqueness}, $(1,c_1,1) \in S(XR(X))$ and by part
\ref{degreeB} of Proposition \ref{prop:uniqueness}, $B_{c_1}(X)$ has degree $p^{h-1}$. Considering
$c_2=:C$ as a variable, the recursive formulas (\ref{eq:B2}) and (\ref{eq:B3}) for the coefficients
$b_i$ of $B_{C}$ show that $\deg_C(b_i)\leq p^{h+i}$. We conclude that the degree of
$\epsilon(c_1,C)$, when considered as polynomial in $C$, is at most $p^{2h-1}$. Since the
cardinality of $W$ is $p^{2h}$, it follows that there exists a $c_2\in W$, and therefore
$\sigma_{b_2,c_2}\in P$, such that $\epsilon(c_1,c_2)\neq 0$. We conclude that
$Z(P)=\langle\rho\rangle$.

Since $\rho\sigma_{b,c}=\sigma_{b+1,c}$, it follows from part \ref{convenientb} of Proposition
\ref{prop:uniqueness} that the map
 \[
P\to W, \qquad \sigma_{b,c}\mapsto c
\]
is a surjective group homomorphism with kernel $\langle \rho\rangle$. 

\item
Let $\sigma:=\sigma_{b_1, c_1}, \sigma^\prime:=\sigma_{b_2, c_2}\in P$ be two noncommuting
elements, and write $\epsilon=\epsilon(c_1, c_2)$. Part \ref{lem:commutator-part1} implies
that $\sigma\sigma^\prime=\rho^{-\epsilon}\sigma^\prime\sigma$. Since $\sigma, \sigma^\prime$ and
$\rho$ have order $p$ (part \ref{item:order} of Lemma \ref{lem:aut}), it follows that $\sigma$ and $\sigma^\prime$ generate a subgroup $E(\sigma,
\sigma^\prime)$ of order $p^3$ of $P$, which contains $Z(P)=\langle\rho\rangle$. Since the exponent
of this subgroup is $p$, it is isomorphic to $E(p^3)$.

For an arbitrary element $\sigma_{b,c}\in P$, part \ref{lem:commutator-part1} implies that
$\sigma_{b,c}\sigma\sigma_{b,c}^{-1}\in \langle \rho, \sigma\rangle\subset E(\sigma,
\sigma^\prime)$, and similarly for $\sigma^\prime$ replacing $\sigma$. Thus $E(\sigma,
\sigma^\prime)$ is a normal subgroup, proving part~\ref{lem:commutator-part3}.
\end{enumerate}
\hfill $\qed$\end{proof}

\begin{theorem}\label{thm:extraspecial}
Assume that $h\geq 1$. Then the group $P$ is an extraspecial group of exponent $p$.
\end{theorem}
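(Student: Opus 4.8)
The plan is to assemble the proof of Theorem~\ref{thm:extraspecial} almost entirely from the structural facts already established in Lemma~\ref{lem:commutator}, together with Lemma~\ref{lem:aut}. First I would recall that since $h\geq 1$, part~\ref{lem:commutator-part2} of Lemma~\ref{lem:commutator} gives $Z(P)=[P,P]=\langle\rho\rangle$, which in particular shows that $P$ is noncommutative and that $|Z(P)|=p$ (here one uses that $\rho$ has order $p$ by part~\ref{item:order} of Lemma~\ref{lem:aut}). Next, the same part of Lemma~\ref{lem:commutator} identifies $P/Z(P)\cong W$ via $\sigma_{b,c}\mapsto c$, and since $W$ is an $\FF_p$-vector space of dimension $2h$, the quotient $P/Z(P)$ is elementary abelian. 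Combining these two observations with Definition~\ref{def:extraspecial} yields that $P$ is extraspecial; moreover its order is $|W|\cdot p=p^{2h}\cdot p=p^{2h+1}$, consistent with part~\ref{item:unique} of Theorem~\ref{thm:aut}.

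It then remains to check that $P$ has exponent $p$, i.e., that every element of $P$ has order dividing $p$. Every nonidentity element of $P$ is of the form $\sigma_{b,c}$ with $(b,c)\neq(0,0)$, and part~\ref{item:order} of Lemma~\ref{lem:aut} states precisely that each such $\sigma_{1,b,c,1}=\sigma_{b,c}$ has order $p$. Hence the exponent of $P$ is $p$, completing the proof.

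I expect no serious obstacle here: the theorem is essentially a repackaging of Lemma~\ref{lem:commutator} and Lemma~\ref{lem:aut}, and the only thing requiring any care is to make sure the hypothesis $h\geq 1$ is invoked where needed (it is what guarantees noncommutativity and the nontriviality of $B_c$ for $c\neq 0$, hence the faithfulness of the commutator pairing used in Lemma~\ref{lem:commutator}). If one wished to be slightly more self-contained one could also remark that the nondegeneracy of the alternating pairing $\epsilon(c_1,c_2)$ on $W$ — already used in the proof of Lemma~\ref{lem:commutator}\ref{lem:commutator-part2} to show $Z(P)=\langle\rho\rangle$ — is exactly the statement that the extraspecial group $P$ has the symplectic $\FF_p$-space $W$ as its "Frattini quotient," but this is not needed for the bare statement of the theorem.

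\begin{proof}
By part~\ref{lem:commutator-part2} of Lemma~\ref{lem:commutator}, since $h\geq 1$ we have $Z(P)=[P,P]=\langle\rho\rangle$. The automorphism $\rho$ has order $p$ by part~\ref{item:order} of Lemma~\ref{lem:aut}, so $|Z(P)|=p$, and since $P$ is noncommutative, $Z(P)\neq P$. The same part of Lemma~\ref{lem:commutator} shows that $\sigma_{b,c}\mapsto c$ induces an isomorphism $P/Z(P)\xrightarrow{\ \sim\ } W$. As $W$ is an $\FF_p$-vector space (of dimension $2h$), the quotient $P/Z(P)$ is elementary abelian. By Definition~\ref{def:extraspecial}, $P$ is therefore an extraspecial group. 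Finally, every nonidentity element of $P$ has the form $\sigma_{b,c}=\sigma_{1,b,c,1}$ with $(b,c)\neq(0,0)$, and such an element has order $p$ by part~\ref{item:order} of Lemma~\ref{lem:aut}. Hence $P$ has exponent $p$.
\hfill $\qed$
\end{proof}
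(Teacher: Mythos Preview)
Your proof is correct and follows essentially the same approach as the paper's own proof: both invoke part~\ref{lem:commutator-part2} of Lemma~\ref{lem:commutator} to verify the extraspecial conditions and part~\ref{item:order} of Lemma~\ref{lem:aut} for the exponent. The paper's version is terser, but you have simply unpacked the same citations.
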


\begin{proof}
Since $h\geq 1$, part \ref{lem:commutator-part2} of Lemma \ref{lem:commutator} shows that $P$ is
an extraspecial group.
 Part \ref{item:order} of Lemma \ref{lem:aut} yields that $P$ has
exponent $p$.
\hfill $\qed$\end{proof}

We now show that $P$ is a central product of $h$ copies of $E(p^3)$, i.e., $P$ is isomorphic to the
quotient of the direct product of $h$ copies of $E(p^3)$, where the centers of each copy have been
identified. These subgroups of $P$ of order $p^3$ have been described in part
\ref{lem:commutator-part3} of Lemma \ref{lem:commutator}.

\begin{corollary}\label{cor:extraspecial}
Assume that $h\geq 1$. Then $P$ is a central product of $h$ copies of $E(p^3)$.
\end{corollary}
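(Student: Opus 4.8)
The plan is to combine the commutator formula of Lemma~\ref{lem:commutator} with elementary symplectic linear algebra over $\FF_p$. By Theorems~\ref{thm:aut} and~\ref{thm:extraspecial}, $P$ is extraspecial of order $p^{2h+1}$ and exponent $p$, with $Z(P)=\langle\rho\rangle$, and by Lemma~\ref{lem:commutator}, part~\ref{lem:commutator-part2}, the map $\sigma_{b,c}\mapsto c$ induces an isomorphism $P/Z(P)\xrightarrow{\sim}W$ of $\FF_p$-vector spaces. Identifying $Z(P)$ with $\FF_p$ via $\rho^{i}\mapsto i$, part~\ref{lem:commutator-part1} of that lemma shows that the commutator pairing on $P$ descends to the map
\[
\epsilon\colon W\times W\longrightarrow \FF_p,\qquad \epsilon(c_1,c_2)=B_{c_1}(c_2)-B_{c_2}(c_1).
\]
First I would record that $\epsilon$ is a non-degenerate alternating $\FF_p$-bilinear form. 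Bilinearity is immediate: $B_{c_1+c_2}=B_{c_1}+B_{c_2}$ by Proposition~\ref{prop:B}, part~\ref{item:prop:b-part2c}, and each $B_c$ is additive, so $\epsilon$ is additive in both arguments, hence $\FF_p$-linear in both arguments since its values lie in $\FF_p$. It is alternating because $\epsilon(c,c)=0$. Finally it is non-degenerate: the computation in the proof of Lemma~\ref{lem:commutator}, part~\ref{lem:commutator-part2}, shows that for every $c_1\in W\setminus\{0\}$ there is $c_2\in W$ with $\epsilon(c_1,c_2)\neq 0$, which says precisely that the radical of $\epsilon$ is trivial.

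Since $\dim_{\FF_p}W=2h$ and $\epsilon$ is non-degenerate and alternating, $W$ decomposes as an $\epsilon$-orthogonal direct sum $W=W_1\perp\cdots\perp W_h$ of hyperbolic planes, $W_i=\FF_p\overline{c}_i\oplus\FF_p\overline{c}_i'$ with $\epsilon(\overline{c}_i,\overline{c}_i')=1$ (the existence of such a symplectic basis is standard). For each $i$ I would pick lifts $\sigma_i,\sigma_i'\in P$ of $\overline{c}_i,\overline{c}_i'$ under $\sigma_{b,c}\mapsto c$. Because $\epsilon(\overline{c}_i,\overline{c}_i')\neq 0$, the elements $\sigma_i$ and $\sigma_i'$ do not commute, so by Lemma~\ref{lem:commutator}, part~\ref{lem:commutator-part3}, the subgroup $E_i:=\langle\sigma_i,\sigma_i'\rangle$ is a normal subgroup of $P$ isomorphic to $E(p^3)$ and containing $Z(P)=\langle\rho\rangle$. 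For $i\neq j$ the orthogonality $\epsilon(W_i,W_j)=0$ together with part~\ref{lem:commutator-part1} of the lemma gives $[\sigma_i,\sigma_j]=[\sigma_i,\sigma_j']=[\sigma_i',\sigma_j]=[\sigma_i',\sigma_j']=1$, so $E_i$ and $E_j$ commute elementwise.

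The final step is to verify that these subgroups exhibit $P$ as an internal central product. Since each $E_i$ is normal, $E_1\cdots E_h$ is a subgroup of $P$; its image under $P\to P/Z(P)\cong W$ is $W_1+\cdots+W_h=W$, and its kernel is $(E_1\cdots E_h)\cap Z(P)=Z(P)$ because $Z(P)\subseteq E_1$. Hence $|E_1\cdots E_h|=|W|\cdot|Z(P)|=p^{2h+1}=|P|$, so $P=E_1\cdots E_h$. Moreover, for each $i$ the image of $E_i\cap\prod_{j\neq i}E_j$ in $W$ lies in $W_i\cap\bigoplus_{j\neq i}W_j=\{0\}$, so $E_i\cap\prod_{j\neq i}E_j=Z(P)=Z(E_i)$. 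Together with the elementwise commuting of distinct $E_i$'s, this is exactly the statement that $P$ is the central product $E_1\ast\cdots\ast E_h$ of $h$ copies of $E(p^3)$, all identified along $\langle\rho\rangle$ (when $h=1$ the claim is just $P\cong E(p^3)$, which is already clear). I do not expect a genuine obstacle here: everything reduces to the symplectic structure theory over $\FF_p$ and to the order count above, and the one point to handle with a little care is the bilinearity of $\epsilon$, which is precisely what Proposition~\ref{prop:B}, part~\ref{item:prop:b-part2c}, provides. Alternatively, one can simply invoke the classification of extraspecial $p$-groups for odd $p$ (see \cite[Chap.~III.13]{Huppert}), by which any extraspecial group of order $p^{2h+1}$ and exponent $p$ is such a central product; the argument above has the advantage of producing the factors explicitly as the subgroups $E_{\sigma,\sigma'}$ of Lemma~\ref{lem:commutator}.
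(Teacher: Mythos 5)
Your proposal is correct and follows the same route as the paper: the paper's stated proof is a citation of \cite[Theorem III.13.7.(c)]{Huppert} together with the observation that exponent $p$ passes to the factors, and in the paragraph immediately following Corollary \ref{cor:extraspecial} the paper spells out precisely the symplectic-basis construction you give (the nondegenerate alternating pairing $\epsilon$ on $W$, a symplectic basis, lifts $\sigma_i,\sigma_i'$, and the subgroups $E_i$ of Lemma \ref{lem:commutator}), noting that this \emph{is} Huppert's proof. Your write-up simply makes that argument self-contained, and the verifications (bilinearity via Proposition \ref{prop:B}, nondegeneracy, elementwise commuting of distinct $E_i$, and the order count showing $P=E_1\cdots E_h$ with $E_i\cap\prod_{j\neq i}E_j=Z(P)$) are all sound.
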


\begin{proof}
Theorem III.13.7.(c) of \cite{Huppert} states that $P$ is the central product of $h$ extraspecial
groups $P_i$ of order $p^3$. Since $P$ has exponent $p$, it follows that the groups $P_i$ have
exponent $p$ as well. Therefore $P_i\simeq E(p^3)$.
\hfill $\qed$\end{proof}



%

We describe the decomposition of $P$ as a central product from Corollary \ref{cor:extraspecial}
explicitly; this description is in fact the proof given in \cite[Theorem III.13.7.(c)]{Huppert}.
The proof of part \ref{lem:commutator-part2} of Lemma \ref{lem:commutator}  shows  that
$\epsilon(c_1, c_2)$ defines a nondegenerate symplectic pairing
\[
W\times W\to \FF_p,\qquad (c_1, c_2)\mapsto \epsilon(c_1, c_2).
\]
We may choose a basis $( c_1, \ldots,c_h, c_1', \ldots, c_h') $ of $W$ such that
\[
\epsilon(c_i, c_j')=\delta_{i,j},
\]
where $\delta_{i,j}$ is the Kronecker function. In particular, it follows that $\langle c_1,
\ldots, c_h\rangle\subset W$ is a maximal isotropic subspace of the bilinear form $\epsilon$.


For every $i$, choose elements $\sigma_i,\sigma^{\prime}_i\in P$ which map to $c_i,c^\prime_i$,
respectively, under the quotient map from part \ref{lem:commutator-part2} of Lemma
\ref{lem:commutator}. This corresponds to choosing an element $b_i$ as in part \ref{convenientb} of
Proposition \ref{prop:uniqueness} for each $i$. Part \ref{lem:commutator-part1} of Lemma
\ref{lem:commutator} implies that $\sigma_i$ does not commute with $\sigma^\prime_i$, but commutes
with $\sigma_j, \sigma^\prime_j$ for every $j\neq i$. Therefore $E_i=\langle \sigma_i,
\sigma^\prime_i\rangle$ is isomorphic to $E(p^3)$ (part \ref{lem:commutator-part3} of Lemma
\ref{lem:commutator}). It follows that $P$ is the central product of the subgroups $E_i$.

We finish this section with a description of the maximal abelian subgroups of $P$. This will be
used in Sect. \ref{sec:KR} to obtain a decomposition of the Jacobian of $C_R$.

\begin{proposition}\label{prop:subgroups}
Let $h\geq 1$.
\begin{enumerate}
\item \label{prop:subgroups-part1} Every maximal abelian subgroup ${\mathcal A}$ of $P$ is an
    elementary abelian group of order $p^{h+1}$, and is normal in $P$.
\item\label{prop:subgroups-part2} Let ${\mathcal A}\simeq (\ZZ/p\ZZ)^{h+1}$ be a maximal
    abelian subgroup of $P$.  For any subgroup $A=A_p\simeq (\ZZ/p\ZZ)^h\subset {\mathcal A}$
    with $A_p\cap Z(P)=\{1\}$ there exist subgroups $A_1, \ldots, A_{p-1}$ of ${\mathcal A}$
    such that
\begin{gather*}
{\mathcal A}=Z(P)\cup A_1\cup\cdots \cup A_p,\\ A_i\simeq
(\ZZ/p\ZZ)^h,\qquad A_i\cap Z(P)=\{1\},\qquad A_i\cap A_j=\{1\} \text{
  if }i\neq j.
\end{gather*}
\item \label{prop:subgroups-part3} Any two subgroups $A$ of ${\mathcal A}$ of order $p^h$ which
    trivially intersect the center of $P$ are conjugate inside $P$.
\end{enumerate}
\end{proposition}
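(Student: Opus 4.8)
The three parts of Proposition~\ref{prop:subgroups} all flow from the structure of $P$ as an extraspecial group together with the nondegenerate symplectic pairing $\epsilon$ on $W \cong P/Z(P)$ obtained in Lemma~\ref{lem:commutator}. For part~\ref{prop:subgroups-part1}, I would argue as follows. Let $\mathcal{A} \subseteq P$ be a maximal abelian subgroup. Since $[P,P]=Z(P)=\langle\rho\rangle$ has order $p$ and $P$ has exponent $p$, $\mathcal{A}$ is elementary abelian, so its image $\overline{\mathcal A}$ under $P \to W$ is an $\FF_p$-subspace. The commutator formula shows that two elements of $P$ commute exactly when their images in $W$ are $\epsilon$-orthogonal; hence $\mathcal{A}$ abelian forces $\overline{\mathcal A}$ isotropic for $\epsilon$, and maximality of $\mathcal{A}$ translates into maximality of $\overline{\mathcal A}$ as an isotropic subspace (any larger isotropic subspace would lift to a larger abelian subgroup, using that the preimage of an isotropic subspace, together with $\rho$, is abelian). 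A maximal isotropic subspace of the nondegenerate symplectic space $W$ of dimension $2h$ has dimension $h$, so $\mathcal{A}$ has order $p^{h}\cdot p = p^{h+1}$. Normality is immediate: $\mathcal{A}$ contains $Z(P)$ and $\mathcal{A}/Z(P)$ is central in $P/Z(P)$ since $P/Z(P)$ is abelian, so $\mathcal{A}$ is normal in $P$.

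For part~\ref{prop:subgroups-part2}, fix $\mathcal{A}\cong(\ZZ/p\ZZ)^{h+1}$ and a complement $A=A_p\cong(\ZZ/p\ZZ)^h$ with $A_p\cap Z(P)=\{1\}$, so that $\mathcal{A}=A_p\times Z(P)$ as $\FF_p$-vector spaces. Pick a basis $\tau_1,\dots,\tau_h$ of $A_p$ and let $\rho$ generate $Z(P)$. For $1\le i\le p-1$ I would set $A_i=\langle \rho^i\tau_1,\tau_2,\dots,\tau_h\rangle$, or more symmetrically define the $A_i$ as the ``graphs'' of the $p-1$ distinct nonzero scalar multiples of a fixed linear map $A_p\to Z(P)$; any choice of $p$ pairwise-transverse hyperplane-type complements inside the $(h+1)$-dimensional space $\mathcal{A}$ that all avoid the line $Z(P)$ will do. Concretely: $\mathcal{A}\cong V\oplus L$ with $\dim V=h$, $\dim L=1$; choosing a nonzero $\ell\in L$ and a surjection $\lambda\colon V\to L$, the subspaces $A_j=\{v+j\lambda(v):v\in V\}$ for $j\in\FF_p$ are $p$ subspaces of dimension $h$, each meeting $L$ trivially, pairwise intersecting trivially (if $v+j\lambda(v)=w+k\lambda(w)$ with $j\ne k$ then $v=w$ and $(j-k)\lambda(v)=0$ forces $v\in\ker\lambda$, but then it lies in $L\cap A_j=\{0\}$... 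I'd need $\lambda$ injective, i.e. $h=1$; for $h>1$ one instead takes $\lambda$ of rank one composed with a coordinate, so the $A_j$ share the codimension-one subspace $\ker\lambda$). One then checks $Z(P)\cup A_1\cup\cdots\cup A_p=\mathcal{A}$ by a counting argument: the $A_j$ for $j\in\FF_p$ cover $p\cdot p^h - (p-1)\cdot p^{h-1}\cdot\text{(overlap)}$... more cleanly, the quotient $\mathcal{A}/\ker\lambda$ is $2$-dimensional over $\FF_p$, its $p+1$ lines pull back to the $p$ subgroups $A_j$ together with $\ker\lambda\cdot L$; removing the single line through $\overline L$ and adding back $Z(P)=L$ gives exactly $\mathcal{A}$. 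I would present this as a clean lemma about covering a $2$-dimensional $\FF_p$-space by its lines, then pull back.

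For part~\ref{prop:subgroups-part3}, let $A,A'\subseteq\mathcal{A}$ both have order $p^h$ and meet $Z(P)$ trivially; then $A\times Z(P)=\mathcal{A}=A'\times Z(P)$, and $A,A'$ are two complements to the line $Z(P)$ inside $\mathcal{A}$, i.e.\ they correspond to two linear maps $\mu,\mu'\colon \overline{\mathcal A}\to Z(P)$ where $\overline{\mathcal A}=\mathcal A/Z(P)$. The plan is to produce $g\in P$ with $g A g^{-1}=A'$ by conjugating: for $g=\sigma_{b,c}\in P$ and $\tau=\sigma_{b',c'}\in\mathcal{A}$, the commutator formula gives $g\tau g^{-1}=\rho^{-\epsilon(c,c')}\tau$, so conjugation by $g$ acts on $\mathcal{A}$ as the identity twisted by the linear functional $\tau\mapsto \rho^{-\epsilon(c,\overline\tau)}$ on $\overline{\mathcal A}$. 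Hence the set of maps $\mu'-\mu$ realizable by conjugation is exactly $\{\,\overline{\tau}\mapsto -\epsilon(c,\overline\tau)\ :\ c\in W\,\}$; since $\epsilon$ is nondegenerate on $W$ and $\overline{\mathcal A}$ is its own image of an isotropic subspace, the restriction map $W\to \operatorname{Hom}(\overline{\mathcal A},\FF_p)$, $c\mapsto \epsilon(c,-)|_{\overline{\mathcal A}}$, is surjective (its kernel is $\overline{\mathcal A}^{\perp}=\overline{\mathcal A}$, of the right codimension). Therefore every target difference $\mu'-\mu\in\operatorname{Hom}(\overline{\mathcal A},Z(P))$ is attained, giving the desired $g$. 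The main obstacle I anticipate is the bookkeeping in part~\ref{prop:subgroups-part2}: getting the covering decomposition $\mathcal{A}=Z(P)\cup A_1\cup\cdots\cup A_p$ exactly right when $h>1$ requires that the $A_i$ share a common codimension-one subspace, and stating this cleanly (rather than hand-waving with "graphs") is where care is needed; parts~\ref{prop:subgroups-part1} and~\ref{prop:subgroups-part3} are essentially linear algebra over the symplectic space $W$ once the commutator dictionary is in place.
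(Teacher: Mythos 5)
Your treatment of parts \ref{prop:subgroups-part1} and \ref{prop:subgroups-part3} is correct and essentially the paper's argument: maximal abelian subgroups are exactly the preimages of maximal isotropic subspaces of $(W,\epsilon)$, normality follows because they contain $[P,P]=Z(P)$, and for part \ref{prop:subgroups-part3} the paper, like you, uses that conjugation by $\sigma_{b,c}$ shifts a splitting $\overline{\mathcal A}\to\mathcal A$ by the functional $\epsilon(c,\cdot)|_{\overline{\mathcal A}}$, which sweeps out all of $\operatorname{Hom}(\overline{\mathcal A},\FF_p)$ since the kernel of $c\mapsto \epsilon(c,\cdot)|_{\overline{\mathcal A}}$ is $\overline{\mathcal A}^{\perp}=\overline{\mathcal A}$. (The paper phrases this concretely via a symplectic basis $c_1,\dots,c_h,c_1',\dots,c_h'$ and the element $\tau=\sigma_{b,\,c_1'+\cdots+c_h'}$.)

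The obstacle you ran into in part \ref{prop:subgroups-part2} is not a defect of your write-up: the statement is false for $h\geq 2$, and no construction can repair it, since any two subgroups of order $p^h$ of $\mathcal A\simeq(\ZZ/p\ZZ)^{h+1}$ intersect in a subgroup of order at least $p^{h-1}$. The paper's own proof takes $A_i=\tau^iA_p\tau^{-i}$ with $\tau\sigma_j\tau^{-1}=\rho\sigma_j$ for every $j$; this is precisely your ``graph of $i\lambda$'' construction with the rank-one functional $\lambda\bigl(\prod_j\sigma_j^{m_j}\bigr)=\sum_jm_j$, so for $h\geq2$ every $A_i$ contains $\ker\lambda\neq\{1\}$ (for instance $\sigma_1\sigma_2^{-1}$), the asserted $A_i\cap A_j=\{1\}$ fails, and the union $Z(P)\cup A_1\cup\cdots\cup A_p$ misses every element $\rho^m v$ with $v\in\ker\lambda\setminus\{1\}$ and $m\neq0$ --- so your proposed patch cannot restore the covering either. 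The configuration genuinely exists only for $h=1$, where the $p+1$ lines of $(\ZZ/p\ZZ)^2$ partition the group. This matters downstream: Proposition \ref{prop:KR} feeds exactly this decomposition into Theorem B of Kani--Rosen. Its conclusion can still be recovered by inducting on $h$ and applying the $h=1$ partition to $\langle\rho,\sigma_h\rangle\simeq(\ZZ/p\ZZ)^2$ at each stage (the quotient is again a curve of the same type by Proposition \ref{prop:pquotient}), but part \ref{prop:subgroups-part2} as stated, and the proof given for it, need to be amended rather than completed.
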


\begin{proof} \hfill
\begin{enumerate}
 \item 
The statement that the maximal abelian subgroups $\mathcal{A}$ of $P$ have order $p^{h+1}$ is
Theorem III.13.7.(e) of \cite{Huppert}. 

\item A maximal abelian subgroup ${\mathcal A}$ is the inverse image of a
maximal isotropic subspace of $W$. Since $P$ has exponent $p$, we conclude that ${\mathcal A}\simeq
(\ZZ/p\ZZ)^{h+1}$ is elementary abelian. Part \ref{lem:commutator-part1} of Lemma
\ref{lem:commutator} and the fact that $\mathcal{A}$ is the inverse image of a maximal isotropic subspace of $W$ imply that ${\mathcal A}$ is a normal subgroup of $P$.
This proves part
\ref{prop:subgroups-part1}.

Let ${\mathcal A}\subset P$ be a maximal abelian subgroup. Without loss of generality, we may assume that
${\mathcal A}$ corresponds to the maximal isotropic subspace generated by the basis elements $c_1,
\ldots, c_h$ of $W$ as described above. In this case we have ${\mathcal A}=\langle \rho, \sigma_1,
\ldots, \sigma_h\rangle$ where $\sigma_i$ maps to $c_i$ under the map from part
\ref{lem:commutator-part2} of Lemma \ref{lem:commutator}.
Define
\[
A_p:=\langle \sigma_1, \ldots \sigma_h\rangle.
\]
This is a subgroup of ${\mathcal A}$ of order $p^h$ such that $A_p\cap Z(P)=\{1\}$.

We define $\tau=\sigma_{b, c_1'+\cdots+c_h'}$, where $b$ is some solution of the equation
\begin{equation*}
b^p-b=(c_1'+\cdots+c_h')R(c_1'+\cdots+c_h')
\end{equation*}
as specified in Remark \ref{rem:b-identity}. Let
\[
A_i=\tau^{i}A_p\tau^{-i}, \qquad i=1, \ldots, p-1.
\]
By part \ref{item:prop:b-part2a} of Proposition \ref{prop:B}, $B_c(X)$ is additive in $c$. This
implies that
\begin{equation*}
B_{c_1'+\cdots+c_h'}(X) = \sum_{i=1}^h B_{c_i'}(X).
\end{equation*}
The choice of the basis $c_i, c_i'$ of $W$, together with part \ref{lem:commutator-part1} of Lemma
\ref{lem:commutator} implies therefore that

\[
\tau \sigma_i\tau^{-1}=\rho^{-\epsilon(c_i',
  c_i)} \sigma_i=\rho^{\epsilon(c_i,
  c_i')}\sigma_i=\rho \sigma_i.
\]
It follows that $A_i\cap Z(P)=\{1\}$ and $A_i\cap A_j=\{1\}$ if $i\neq j$. By counting, we see that
each non-identity element of $\mathcal{A}$ is contained in exactly one $A_i$. 

\item
Let $A, A'$ be two subgroups of ${\mathcal A}$ as in the statement of part
\ref{prop:subgroups-part3}. Without loss of generality, we may assume that $A=A_p=\langle \sigma_1, \ldots
\sigma_h\rangle$, as in the proof of part \ref{prop:subgroups-part2}. Then $A'=\langle
\rho^{j_1}\sigma_1 \ldots, \rho^{j_h}\sigma_{h}\rangle$ for suitable $j_i\in \FF_p$. Define $c=\sum_{i=1}^h j_ic_i\in W$ and
choose $b$ with $b^p-b=B_c(c)/2$. As in the proof of part \ref{prop:subgroups-part2} it follows
that $\tau:=\sigma_{b, c}$ satisfies $\tau A\tau^{-1}=A'$.
\end{enumerate}
\hfill $\qed$\end{proof}

\section{Decomposition of the Jacobian of $C_R$}\label{sec:KR}



In this section we decompose the Jacobian of $C_R$ over the splitting
field $\FF_q$ of the polynomial $E$. This decomposition allows us to
reduce the calculation of the zeta function of $C_R$ over $\FF_q$ to
that of a certain quotient curve. This quotient curve is computed in
Sect. \ref{sec:quotientcurve}, and Sect. \ref{sec:zeta} combines
these results to compute the zeta function of $C_R$ over $\FF_q$.

The decomposition result (Proposition \ref{prop:KR}) we prove below is
based on the following general result of Kani--Rosen (\cite[Theorem B]{KaniRosen}).

\begin{theorem}[Kani-Rosen \cite{KaniRosen}]\label{thm:KR}
Let $C$ be a smooth projective curve defined over an algebraically
closed field $k$, and $G$ a (finite) subgroup of $ \Aut_k(C)$ such that
$G = H_1 \cup H_2 \cup \ldots \cup H_t$, where the subgroups $H_i \leq G$
satisfy $H_i \cap H_j = \{1\}$ for $i\neq j$. Then we have the isogeny
relation
\begin{equation*}
\Jac(C)^{t-1} \times \Jac(C/G)^g \sim \Jac(C/H_1)^{h_1} \times \cdots
\times\Jac(C/H_t)^{h_t},
\end{equation*}
where $g = \# G$, $h_i = \# H_i$, and $\Jac^n = \Jac \times \cdots\times
\Jac$ ($n$ times).
\end{theorem}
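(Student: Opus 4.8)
The plan is to pass to the category of abelian varieties up to isogeny over $k$, which is $\mathbb{Q}$-linear and semisimple, and to read off the asserted relation from a single identity of idempotents in the rational group algebra $\mathbb{Q}[G]$ applied via functoriality of the Jacobian.

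First I would record the functorial input. The action of $G$ on $C$ induces a ring (anti)homomorphism $\mathbb{Q}[G]\to\End^0(\Jac C):=\End(\Jac C)\otimes\mathbb{Q}$, so every idempotent of $\mathbb{Q}[G]$ acts on $\Jac C$ as an idempotent in $\End^0$. For a subgroup $H\le G$ set $\varepsilon_H=|H|^{-1}\sum_{h\in H}h\in\mathbb{Q}[G]$; this is an idempotent, invariant under $h\mapsto h^{-1}$, and the standard fact $\Jac(C/H)\sim\varepsilon_H\cdot\Jac C$ identifies the image abelian variety $\varepsilon_H\Jac C$ (well defined up to isogeny) with $\Jac(C/H)$: the quotient map $\pi_H\colon C\to C/H$ gives $\pi_H^{*}\colon\Jac(C/H)\to\Jac C$ with $\pi_{H*}\pi_H^{*}=[\,|H|\,]$ and $\pi_H^{*}\pi_{H*}=\sum_{h\in H}h=|H|\,\varepsilon_H$, so $\pi_H^{*}$ is an isogeny onto $\varepsilon_H\Jac C$. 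In particular $\varepsilon_{\{1\}}=\mathrm{id}$ with $\varepsilon_{\{1\}}\Jac C=\Jac C$, and $\varepsilon_G\Jac C\sim\Jac(C/G)$.

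Next comes the combinatorics of the hypothesis. Since $G=H_1\cup\cdots\cup H_t$ with $H_i\cap H_j=\{1\}$ for $i\ne j$, every non-identity element of $G$ lies in exactly one $H_i$ while $1$ lies in all $t$ of them, so in $\mathbb{Z}[G]$ we have $\sum_{i=1}^{t}\sum_{h\in H_i}h=(t-1)\cdot 1+\sum_{g\in G}g$. Rewriting, this is the idempotent identity
\[
(t-1)\,\varepsilon_{\{1\}}+|G|\,\varepsilon_G=\sum_{i=1}^{t}|H_i|\,\varepsilon_{H_i}\qquad\text{in }\End^0(\Jac C).
\]

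The crux, and the heart of \cite{KaniRosen}, is the general principle: an equality $\sum_i n_i e_i=\sum_j m_j f_j$ in $\End^0(A)$ among idempotents $e_i,f_j$ (symmetric with respect to a Rosati involution, which holds here since curve automorphisms respect the canonical polarization) with positive integer coefficients forces $\prod_i(e_iA)^{n_i}\sim\prod_j(f_jA)^{m_j}$. I would prove it by Poincaré reducibility: write $A\sim\prod_k S_k^{d_k}$ with the $S_k$ pairwise non-isogenous simple and $D_k=\End^0(S_k)$; for each $k$ and each idempotent $e$, the multiplicity of $S_k$ in $eA$ equals $\deg(D_k)^{-1}\,\mathrm{Trd}_{D_k}\!\bigl(e\mid\mathrm{Hom}^0(S_k,A)\bigr)$ (on an idempotent, reduced rank equals reduced trace), and this expression is $\mathbb{Q}$-linear in $e$, hence respects the relation, so the two products carry the same multiplicity of every simple factor. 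Applying this to the displayed identity and substituting $\varepsilon_H\Jac C\sim\Jac(C/H)$ gives
\[
\Jac(C)^{t-1}\times\Jac(C/G)^{|G|}\sim\prod_{i=1}^{t}\Jac(C/H_i)^{|H_i|},
\]
which is the assertion. The main obstacle is precisely this last step — justifying in the isogeny category over a field of characteristic $p$ that an additive trace-type invariant computes the isotypic multiplicities of the image of an idempotent — so in the present paper I would simply invoke \cite{KaniRosen} rather than reprove it.
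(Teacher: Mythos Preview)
The paper does not prove this theorem at all: it is quoted verbatim as Theorem~B of \cite{KaniRosen} and used as a black box (the idempotent formalism you describe resurfaces only later, in the proof of Proposition~\ref{prop:KR}, where Theorem~2 of \cite{KaniRosen} is invoked). Your sketch is correct and is precisely the Kani--Rosen argument---the combinatorial identity $\sum_i |H_i|\varepsilon_{H_i}=(t-1)\varepsilon_{\{1\}}+|G|\varepsilon_G$ together with their ``idempotent relation $\Rightarrow$ isogeny relation'' principle---so your closing remark that you would simply cite \cite{KaniRosen} matches exactly what the paper does.
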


We apply Theorem \ref{thm:KR} to a maximal abelian subgroup ${\mathcal  A}\subset P$. Recall from
part \ref{prop:subgroups-part1} of Proposition \ref{prop:subgroups} that ${\mathcal A}$ is an
elementary abelian $p$-group of order $p^{h+1}$ which contains the center $Z(P)=\langle
\rho\rangle$ of $P$. Part \ref{convenientb} of Proposition \ref{prop:uniqueness} implies that all
automorphisms in ${\mathcal A}$ are defined over $\FF_q$.

Recall from part part \ref{prop:subgroups-part2} of Proposition \ref{prop:subgroups} the existence
of a decomposition
\begin{equation}\label{eq:A}
{\mathcal A}=A_0\cup A_1\cup \cdots \cup A_p,
\end{equation}
where $A_0=\langle \rho\rangle$ is the center of $P$ and for $i\neq 0$ the $A_i$ are elementary
abelian $p$-groups of order $p^h$.

Each group $A_i$ defines a quotient curve $\overline{C}_{A_i}:=C_R/A_i.$ Since all automorphisms in
$A_i$ are defined over $\FF_q$, it follows that the quotient curve $\overline{C}_{A_i}$ together
with the natural map $\pi_{A_i}\colon C_R\to\overline{C}_{A_i}$ may also be defined over $\FF_q$.
The following lemma implies that all curves $\overline{C}_{A_i}$ are isomorphic over $\FF_q$.

\begin{lemma}\label{lem:KR}
Let ${\mathcal A}$ be a maximal abelian subgroup of $P$, and let $A$ and $A'$ be two subgroups of
${\mathcal A}$ of order $p^h$ which have trivial intersection with the center of $P$.  Then the
curves $C_R/A$ and $C_R/A'$ are isomorphic over $\FF_q$.
\end{lemma}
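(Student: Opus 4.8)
The plan is to deduce the isomorphism $C_R/A \cong C_R/A'$ over $\FF_q$ directly from part \ref{prop:subgroups-part3} of Proposition \ref{prop:subgroups}, which asserts that any two subgroups $A, A' \subseteq {\mathcal A}$ of order $p^h$ with trivial intersection with $Z(P)$ are conjugate inside $P$. So first I would write $A' = \tau A \tau^{-1}$ for some $\tau \in P$. Conjugation by $\tau$ is an automorphism of $C_R$, and it carries the subgroup $A$ to $A'$; hence it descends to an isomorphism of quotient curves $\overline{\tau}\colon C_R/A \xrightarrow{\sim} C_R/A'$ fitting into a commutative square with the quotient maps $\pi_A$ and $\pi_{A'}$ and the automorphism $\tau\colon C_R \to C_R$. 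This is the standard fact that if $\tau N \tau^{-1} = N'$ for subgroups $N, N'$ of $\Aut(C)$, then $\tau$ induces an isomorphism $C/N \cong C/N'$.

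The second point, and the only one requiring a little care, is the field of definition: I need the isomorphism to be defined over $\FF_q$ rather than merely over $k$. For this I would invoke part \ref{convenientb} of Proposition \ref{prop:uniqueness} (equivalently, the remark just before Lemma \ref{lem:KR}): every automorphism in $P$, and in particular the conjugating element $\tau = \sigma_{b,c}$ with $c \in W \subseteq \FF_q$ and $b = B_c(c)/2 + i \in \FF_q$, is defined over $\FF_q$. Likewise, as already observed in the text preceding the lemma, the quotient curves $\overline{C}_A = C_R/A$ and $\overline{C}_{A'} = C_R/A'$ together with their quotient maps $\pi_A, \pi_{A'}$ are defined over $\FF_q$, since all automorphisms in $A$ and $A'$ are. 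Since $\tau$, $\pi_A$, and $\pi_{A'}$ are all $\FF_q$-morphisms and $\tau$ maps fibers of $\pi_A$ to fibers of $\pi_{A'}$, the induced map $\overline{\tau}$ is an $\FF_q$-morphism; its inverse is induced by $\tau^{-1}$ in the same way, so $\overline{\tau}$ is an $\FF_q$-isomorphism.

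The main (mild) obstacle is purely bookkeeping: making the descent-to-the-quotient argument precise at the level of function fields or of the GIT/categorical quotient so that it is visibly Galois-equivariant. Concretely, one identifies the function field $\FF_q(\overline{C}_A)$ with the fixed field $\FF_q(C_R)^A$ and $\FF_q(\overline{C}_{A'})$ with $\FF_q(C_R)^{A'}$; the automorphism $\tau^*$ of $\FF_q(C_R)$ satisfies $\tau^*\big(\FF_q(C_R)^{A'}\big) = \FF_q(C_R)^{\tau^{-1} A' \tau} = \FF_q(C_R)^{A}$ because $A = \tau^{-1} A' \tau$, and this restriction is the required $\FF_q$-isomorphism of function fields, hence corresponds to an $\FF_q$-isomorphism of the smooth projective models $C_R/A$ and $C_R/A'$. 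I expect this to take only a few lines, since all the substantive content — the conjugacy of $A$ and $A'$ and the rationality of the conjugating automorphism — has already been established in Proposition \ref{prop:subgroups} and Proposition \ref{prop:uniqueness}.
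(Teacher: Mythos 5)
Your proposal is correct and follows exactly the paper's own argument: invoke part \ref{prop:subgroups-part3} of Proposition \ref{prop:subgroups} to write $A'=\tau A\tau^{-1}$, observe that $\tau$ induces an isomorphism $C_R/A\to C_R/A'$, and conclude that it is defined over $\FF_q$ because $\tau$ is. The extra function-field bookkeeping you supply is a fuller version of the same reasoning the paper leaves implicit.
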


\begin{proof}
Part \ref{prop:subgroups-part3} of Proposition \ref{prop:subgroups} states that the subgroups $A$
and $A'$  are conjugate inside $P$. Namely, we have $A'=\tau A\tau^{-1}$ for an explicit element
$\tau \in P$. The automorphism $\tau$ of $C_R$ induces an isomorphism
\[
\tau \colon C_R/A\to C_R/A'.
\]
 Since  $\tau$ is
defined over $\FF_q$, this isomorphism is defined over $\FF_q$ as well.
%
\hfill $\qed$\end{proof}

We write $J_R:=\Jac(C_R)$ for the Jacobian variety of $C_R$. Since $C_R$ is defined over $\FF_q$
and has an $\FF_q$-rational point, the Jacobian variety $J_R$ is also defined over $\FF_q$.  The
map $\pi_{A_i}$ induces  $\FF_q$-rational isogenies
\begin{equation}
\label{eq:JacAi}
\pi_{A_i,\ast} \colon J_R\to \Jac(\overline{C}_{A_i}), \qquad
\pi_{A_i}^\ast \colon \Jac(\overline{C}_{A_i})\to J_R.
\end{equation}
The element
\[
\varepsilon_{A_i}=\frac{1}{p^h}\pi_{A_i}^\ast\circ
\pi_{A_i,\ast}\in \End^0(J_R):=\End(J_R)\otimes \QQ
\]
is an idempotent (\cite[Sect. 2]{KaniRosen}) and satisfies the property that
$\varepsilon_{A_i}(J_R)$ is isogenous to $\Jac(\overline{C}_{A_i})$. Note that $p^h$ is the degree
of the map $\pi_{A_i}$.

In the following result we use these idempotents to decompose $J_R$. The same strategy was also
used in \cite[Sect. 10]{GeerVlugt} in the case that $p=2$. In that source, Van der Geer and Van
der Vlugt give a direct proof in their situation of the result of Kani--Rosen (Theorem
\ref{thm:KR}) that we apply here.

\begin{proposition}\label{prop:KR}
There exists an $\FF_q$-isogeny
\[
J_R\sim_{\FF_q}  \Jac(\overline{C}_{A_p})^{p^{h}}.
\]
\end{proposition}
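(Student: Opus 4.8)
The plan is to apply the Kani--Rosen isogeny relation (Theorem~\ref{thm:KR}) directly to the maximal abelian subgroup $\mathcal{A}\subset P$, using the decomposition $\mathcal{A}=A_0\cup A_1\cup\cdots\cup A_p$ recalled in~(\ref{eq:A}), where $A_0=\langle\rho\rangle$ and each $A_i$ for $i\neq 0$ has order $p^h$ and trivially intersects $Z(P)$. Since the $A_i$ pairwise intersect trivially (part~\ref{prop:subgroups-part2} of Proposition~\ref{prop:subgroups}), Theorem~\ref{thm:KR} with $t=p+1$, $g=\#\mathcal{A}=p^{h+1}$, $h_0=p$, and $h_i=p^h$ for $i\geq 1$ gives
\[
J_R^{\,p}\times \Jac(C_R/\mathcal{A})^{p^{h+1}}\sim \Jac(C_R/A_0)^{p}\times\Jac(C_R/A_1)^{p^h}\times\cdots\times\Jac(C_R/A_p)^{p^h}.
\]

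Next I would simplify the right-hand side. Since $\rho\in A_0$, Lemma~\ref{lem:Aquotient}\ref{item:genus} gives $g(C_R/A_0)=0$, so $\Jac(C_R/A_0)$ is trivial; likewise $\mathcal{A}$ contains $\rho$, so $g(C_R/\mathcal{A})=0$ and $\Jac(C_R/\mathcal{A})$ is trivial. This collapses the displayed relation to $J_R^{\,p}\sim \prod_{i=1}^{p}\Jac(C_R/A_i)^{p^h}$. By Lemma~\ref{lem:KR}, all the curves $C_R/A_i$ ($1\leq i\leq p$) are isomorphic over $\FF_q$ — in particular to $\overline{C}_{A_p}$ — so $\Jac(C_R/A_i)\sim_{\FF_q}\Jac(\overline{C}_{A_p})$ for each $i$, and the product becomes $\Jac(\overline{C}_{A_p})^{p\cdot p^h}$. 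Thus $J_R^{\,p}\sim_{\FF_q}\Jac(\overline{C}_{A_p})^{p^{h+1}}$.

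Finally, to pass from $J_R^{\,p}\sim\Jac(\overline{C}_{A_p})^{p^{h+1}}$ to the asserted $J_R\sim_{\FF_q}\Jac(\overline{C}_{A_p})^{p^h}$, I would not rely on a cancellation theorem (which can fail for general abelian varieties), but instead argue directly with the idempotents $\varepsilon_{A_i}\in\End^0(J_R)$ from~(\ref{eq:JacAi}) and the displayed text preceding this proposition. The point is that Kani--Rosen's proof realizes the isogeny via an orthogonal decomposition of $J_R$ itself: one shows $\sum_{i=0}^{p}\varepsilon_{A_i}$ acts on $J_R$ (up to isogeny) in a way that, combined with $\varepsilon_{A_0}(J_R)\sim\Jac(C_R/A_0)=0$, yields $J_R\sim_{\FF_q}\prod_{i=1}^{p}\varepsilon_{A_i}(J_R)\sim_{\FF_q}\prod_{i=1}^p\Jac(\overline{C}_{A_i})$; then Lemma~\ref{lem:KR} finishes as before. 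I expect the main obstacle to be making this last step precise and $\FF_q$-rational: one must verify that the relevant idempotent identity in $\End^0(J_R)$ actually decomposes $J_R$ (rather than merely a power of it), which is exactly the content of the refinement of Kani--Rosen used in~\cite[Sect.~10]{GeerVlugt} for $p=2$; here one invokes that all $\varepsilon_{A_i}$ and the conjugating automorphisms $\tau$ are defined over $\FF_q$ (part~\ref{convenientb} of Proposition~\ref{prop:uniqueness}), so the whole decomposition descends to $\FF_q$.
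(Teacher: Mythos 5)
Your overall route is the same as the paper's: apply Kani--Rosen (Theorem~\ref{thm:KR}) to the decomposition ${\mathcal A}=A_0\cup A_1\cup\cdots\cup A_p$, discard the genus-zero quotients $C_R/{\mathcal A}$ and $C_R/A_0$ via Lemma~\ref{lem:Aquotient}, identify the $\overline{C}_{A_i}$ over $\FF_q$ via Lemma~\ref{lem:KR}, and then resolve the relation $J_R^{\,p}\sim \Jac(\overline{C}_{A_p})^{p^{h+1}}$ not by cancellation (which you rightly refuse to invoke) but by working with the idempotents $\varepsilon_{A_i}$. You have correctly located the only delicate point.

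However, the formula you write in the final step is wrong as stated: $J_R\sim\prod_{i=1}^{p}\varepsilon_{A_i}(J_R)\sim\prod_{i=1}^p\Jac(\overline{C}_{A_i})$ has dimension $p(p-1)/2$ on the right, whereas $\dim J_R=p^h(p-1)/2$, so this can only hold for $h=1$; the multiplicity $p^{h-1}$ has been dropped. The clean way to carry out the division by $p$ --- and the one the paper uses --- is Theorem~2 of \cite{KaniRosen}: the isogeny relation $J_R^{\,p}\sim\prod_{i=1}^p\Jac(\overline{C}_{A_i})^{p^{h}}$ is \emph{equivalent} to the relation $p\,\mathrm{Id}\sim p^{h}\bigl(\sum_{i=1}^p\varepsilon_{A_i}\bigr)$ of virtual characters in $\End^0(J_R)$. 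Since $\End^0(J_R)$ is a $\QQ$-algebra one may divide this relation by $p$, and applying Theorem~2 in the reverse direction yields $J_R\sim\prod_{i=1}^p\Jac(\overline{C}_{A_i})^{p^{h-1}}\sim\Jac(\overline{C}_{A_p})^{p^{h}}$. Your concluding remarks about $\FF_q$-rationality (the $\pi_{A_i}^\ast$, $\pi_{A_i,\ast}$ and the conjugating $\tau$ all being defined over $\FF_q$) are exactly what is needed to descend the resulting $k$-isogeny to $\FF_q$, and match the paper.
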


\begin{proof}
We apply Theorem \ref{thm:KR} to the decomposition
(\ref{eq:A}) of a maximal abelian subgroup ${\mathcal A}$ of $P$.
 This
result shows the existence of a $k$-isogeny
\begin{equation}\label{eq:KR}
J_R^p\times \Jac(C_R/{\mathcal A})^{p^{h+1}}\sim_k \Jac(\overline{C}_{A_0})^p\times
\prod_{i=1}^p \Jac(\overline{C}_{A_i})^{p^{h}}.
\end{equation}

The groups ${\mathcal A}$ and $A_0$ contain the Artin--Schreier element $\rho$; hence the curves
$C_R/{\mathcal A}$ and $\overline{C}_{A_0}$ have genus zero (part \ref{item:genus} of Lemma
\ref{lem:Aquotient}). 
Therefore the Jacobians of these curves are trivial
and may be omitted from (\ref{eq:KR}).

As before, let $\varepsilon_{A_i}\in \End(J_R)$ denote the idempotent
corresponding to $A_i$. Theorem 2 of \cite{KaniRosen} states that the
isogeny relation from (\ref{eq:KR}) is equivalent to the relation
\[
p\,\text{Id}\sim p^{h}(\sum_{i=1}^p \varepsilon_{A_i})\in \End^0(J_R).
\]
Here, as defined on p. 312 of \cite{KaniRosen}, the notation $a\sim b$ means that $\chi(a)=\chi(b)$
for all virtual characters of $\End^0(J_R)$. Since $\End^0(J_R)$ is a $\QQ$-algebra, we may divide
by $p$ on both sides of this relation. Applying Theorem 2 of \cite{KaniRosen} once more yields the
isogeny relation
\begin{equation}\label{eq:isog}
J_R\sim_{k}  \prod_{i=1}^p\Jac(\overline{C}_{A_i})^{p^{h-1}}.
\end{equation}
We have already seen that the isogenies $\pi_{A_i}^\ast$ and $ \pi_{A_i,\ast}$ are defined over
$\FF_q$. It follows that the isogeny (\ref{eq:isog}) is defined over $\FF_q$ as well (see also
Remark 6 in Sect. 3 of \cite{KaniRosen}). Since the curves $\overline{C}_{A_i}$, and hence also
their Jacobians, are isomorphic (Lemma \ref{lem:KR}), the statement of the proposition follows.
\hfill $\qed$\end{proof}

\section{Quotients of $C_R$ by elementary abelian $p$-groups}
\label{sec:quotientcurve}


We consider again a maximal abelian subgroup ${\mathcal A}\simeq (\ZZ/p\ZZ)^{h+1}$ of $P$ and
choose $A\subset {\mathcal A}$ with $A\simeq (\ZZ/p\ZZ)^h$ and $A\cap Z(P)=\{1\}$.  In this section
we compute an $\FF_q$-model of the quotient curve $\overline{C}_A=C_R/A$. Lemma \ref{lem:KR}
implies that the $\FF_q$-isomorphism class of the quotient curve does not depend on the choice of
the subgroup $A$.


Since $A\cap Z(P)=\{1\}$, part \ref{item:ram} of Lemma
\ref{lem:Aquotient} implies that the filtration of higher ramification
groups in the lower numbering of $A$ is
\begin{equation*}
A = G_0 = G_1 \supsetneq G_2 = \{1\},
\end{equation*}
so the Riemann--Hurwitz formula yields
\[
2g(C_R)-2=p^h(p-1)-2=(2g(\overline{C}_A)-2)p^h+2(p^h-1).
\]
We conclude that $g(\overline{C}_A)=(p-1)/2$.

Proposition \ref{prop:subgroups} implies that the elements of $A$ commute with $\rho$, since
$\rho\in Z(P)$.
It follows that $\overline{C}_A$ is an Artin--Schreier cover of the projective line branched at one
point. Artin--Schreier theory implies therefore that $\overline{C}_{A}$ may be given by an
Artin--Schreier equation
\[
Y^p-Y=f_A(X),
\]
where $f_A(X)$ is a polynomial of degree $2$. Theorem
\ref{thm:Aquotient} below implies that this polynomial $f_A(X)$ is in
fact of the form $f_A(X) = a_AX^2$ for an explicit constant
$a_A$. These curves are all isomorphic over the algebraically closed
field $k$, but not over $\FF_q$.  The following lemma describes the
different $\FF_q$-models of the curves $Y^p-Y=eX^2$ for $e \in \FF_q$.




\begin{lemma}\label{lem:Fqmodels}
For $e \in \mathbb{F}_q$, define the curve $D_e$ by the affine equation
\begin{equation}\label{eq:De}
 Y^p-Y=eX^2.
\end{equation}
Two curves $D_{e_1}$ and $D_{e_2}$ as in (\ref{eq:De}) are isomorphic over $\FF_q$ if and only
if $e_1/e_2$ is the product of a square in $\FF_q^\ast$ with an element of $\FF_p^\ast$. In
particular, over $\overline{\FF}_q$, any two of these curves are isomorphic.
\end{lemma}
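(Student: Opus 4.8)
The plan is to prove both implications directly from the (hyper)elliptic description of the curves $D_e$.

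For the \emph{``if''} direction, suppose $e_1/e_2=\lambda s^2$ with $\lambda\in\FF_p^\ast$ and $s\in\FF_q^\ast$. Then $(X,Y)\mapsto(sX,\lambda^{-1}Y)$ is an $\FF_q$-isomorphism $D_{e_1}\to D_{e_2}$, since on $D_{e_1}$ one has $(\lambda^{-1}Y)^p-(\lambda^{-1}Y)=\lambda^{-1}(Y^p-Y)=\lambda^{-1}e_1X^2=e_2(sX)^2$, using $\lambda^{-p}=\lambda^{-1}$.

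For the \emph{``only if''} direction I would work on function fields. Rewriting the defining equation as $x^2=(y^p-y)/e$ exhibits $\FF_q(y)\subset\FF_q(x,y)$ as an index-two subfield of genus zero, the fixed field of the involution $\iota_e\colon(x,y)\mapsto(-x,y)$. If $p\ge 5$ then $g(D_e)=(p-1)/2\ge 2$, so $\iota_e$ is the hyperelliptic involution and is preserved by every isomorphism of curves; hence an $\FF_q$-isomorphism $\phi\colon D_{e_1}\to D_{e_2}$ conjugates $\iota_{e_1}$ to $\iota_{e_2}$, so $\phi^\ast$ carries $\FF_q(y_2)$ onto $\FF_q(y_1)$ and $\phi^\ast(y_2)=\mu(y_1)$ for some $\mu\in\operatorname{PGL}_2(\FF_q)$. (When $p=3$, $D_e$ is an elliptic curve with origin the point at infinity and $\iota_e=[-1]$; composing $\phi$ with a translation by an $\FF_q$-rational point reduces to the case where $\phi$ fixes the origin, and then the same conclusion holds --- alternatively this case may be treated by a direct computation.) Writing $\phi^\ast(x_2)=u(y_1)+v(y_1)x_1$ with $u,v\in\FF_q(y_1)$ and using that $\phi^\ast(x_2)^2=(\mu(y_1)^p-\mu(y_1))/e_2$ lies in $\FF_q(y_1)$, the vanishing of the $x_1$-coefficient $2uv$ forces $u=0$ (the case $v=0$ being excluded because $x_2\notin\FF_q(y_2)$), so $v\in\FF_q(y_1)^\ast$. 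Substituting $x_1^2=(y_1^p-y_1)/e_1$ then yields the key relation
\[
v^2=\frac{e_1}{e_2}\cdot\frac{\mu(y_1)^p-\mu(y_1)}{y_1^p-y_1},
\]
and conversely every $\mu\in\operatorname{PGL}_2(\FF_q)$ for which the right-hand side is a square in $\FF_q(y_1)^\ast$ (with $v$ a square root) gives back an $\FF_q$-isomorphism $D_{e_1}\to D_{e_2}$. So the question becomes: for which $\mu$ is $\tfrac{e_1}{e_2}\,g_\mu(y)$ a square in $\FF_q(y)^\ast$, where $g_\mu(y):=(\mu(y)^p-\mu(y))/(y^p-y)$?

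This is the heart of the matter. From $y^p-y=\prod_{j\in\FF_p}(y-j)$ and $\mu(y)^p-\mu(y)=\prod_{j\in\FF_p}(\mu(y)-j)$ one computes, on $\PP^1_y$,
\[
\operatorname{div}(g_\mu)=\sum_{j\in\FF_p}[\mu^{-1}(j)]-p\,[\mu^{-1}(\infty)]-\sum_{j\in\FF_p}[j]+p\,[\infty].
\]
Since $p$ is odd, comparing the contributions of the two groups of terms point by point shows that all multiplicities of $\operatorname{div}(g_\mu)$ are even if and only if $\mu$ maps $\PP^1(\FF_p)$ bijectively onto itself, that is, $\mu\in\operatorname{PGL}_2(\FF_p)$. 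For such a $\mu$, write $\mu(y)=(ay+b)/(cy+d)$ with $a,b,c,d\in\FF_p$; then $a^p=a$, $b^p=b$, and so on give
\[
\mu(y)^p-\mu(y)=\frac{(ad-bc)(y^p-y)}{(cy+d)^{p+1}},
\]
hence $g_\mu(y)=(ad-bc)/(cy+d)^{p+1}$, and $(cy+d)^{p+1}$ is a square in $\FF_q(y)^\ast$ because $p+1$ is even. As the unit group of $\FF_q[y]$ is $\FF_q^\ast$, it follows that $\tfrac{e_1}{e_2}g_\mu$ is a square in $\FF_q(y)^\ast$ exactly when $\tfrac{e_1}{e_2}(ad-bc)$ is a square in $\FF_q^\ast$. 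Since $ad-bc$ can be prescribed to be any nonzero element of $\FF_p$ (scale a diagonal $\mu$), an $\FF_q$-isomorphism $D_{e_1}\to D_{e_2}$ exists if and only if $e_1/e_2\in\FF_p^\ast\cdot(\FF_q^\ast)^2$ --- which is the stated condition. Over $\overline{\FF}_q$ every nonzero element is a square, so the final assertion is immediate.

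The step I expect to be the main obstacle is the passage from an abstract $\FF_q$-isomorphism to the normal form $y_2\mapsto\mu(y_1)$, $x_2\mapsto v\,x_1$ --- i.e.\ showing $\phi$ respects the subfield $\FF_q(y)$ --- together with the small separate treatment needed in the genus-one case $p=3$. Once the problem is reduced to deciding when $\tfrac{e_1}{e_2}g_\mu$ is a square in $\FF_q(y)$, the divisor computation and the closed formula for $\mu(y)^p-\mu(y)$ are routine, with the parity facts ``$p$ odd'' and ``$p+1$ even'' carrying the essential content.
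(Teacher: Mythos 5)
Your argument is correct, but it takes a genuinely different route from the paper's in the ``only if'' direction. The paper reduces to isomorphisms fixing $\infty$ by invoking the polar-semigroup rigidity of Lehr--Matignon (Proposition 3.3 and Theorem 3.1 of that paper) to move $\varphi^{-1}(\infty)$ back to $\infty$ by an explicit $\FF_q$-automorphism (with the same elliptic-curve translation trick you use when $p=3$), and then exploits the Artin--Schreier structure over the $x$-line: any such isomorphism is affine in both coordinates, $(x,y)\mapsto(\nu_0x+\nu_1,\nu_2y+\nu_3)$, and comparing coefficients immediately gives $e_2=\nu_2e_1/\nu_0^2$ with $\nu_2\in\FF_p^\ast$. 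You instead work over the $y$-line, using the uniqueness and centrality of the hyperelliptic involution for $g\geq 2$ to see that any isomorphism carries $\FF_q(y_2)$ to $\FF_q(y_1)$, and then convert the problem into deciding when $(e_1/e_2)\,g_\mu$ is a square in $\FF_q(y)$; the divisor-parity computation correctly pins $\mu$ down to $\operatorname{PGL}_2(\FF_p)$ and the closed form $g_\mu=(ad-bc)/(cy+d)^{p+1}$ finishes the argument. The trade-off: the paper's route is shorter and stays within its running framework of automorphisms fixing $\infty$, at the cost of citing \cite{LehrMatignon}; your route is self-contained modulo the standard fact about the hyperelliptic involution, and as a by-product it classifies exactly which M\"obius transformations of the $y$-line arise (all of $\operatorname{PGL}_2(\FF_p)$), which is more information than the lemma asks for. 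Both approaches require the separate genus-one treatment at $p=3$, and your explicit map in the ``if'' direction coincides with the paper's.
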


\begin{proof}


Let $D_{e_1}$ and $D_{e_2}$ be curves of the form
(\ref{eq:De}). Suppose there exists an $\FF_q$-isomorphism $\varphi
\colon D_{e_1} \to D_{e_2}$. We claim that there exists an
$\FF_q$-isomorphism which sends $\infty\in D_{e_1}$ to $\infty\in
D_{e_2}$.

We first consider the case that $p> 3$, i.e., $g(D_{e_i})\geq
2$. In this case, Proposition 3.3 of \cite{LehrMatignon} states that
there exists an automorphism $\sigma$ of $D_{e_1}$ over
$\overline{\FF}_q$ such that $\varphi\circ \sigma$ sends the point
$\infty\in D_{e_1}$ to the point $\infty\in D_{e_2}$. To prove the
claim it suffices to show that $\sigma$ may be defined over $\FF_q$.

To prove this, we follow the proof of Proposition 3.3 of
\cite{LehrMatignon} and use the fact that $\varphi$ maps every point
of $D_{e_1}$ to a point of $D_{e_2}$ with the same polar
semigroup. Theorem 3.1.(a) of \cite{LehrMatignon} implies that the
only points of $D_{e_1}$ with the same polar semigroup as $\infty$ are
the points $Q_i:=(0,i)$ with $i\in \FF_p$. It follows that
$\varphi^{-1}(\infty)$ is either $\infty$ or $Q_i$ for some $i\in
\FF_p$. In the former case,  there is nothing to show.  If
$\varphi^{-1}(\infty)=Q_i$, we may choose
\[
\sigma(x,y)=\left(\frac{x}{y^{(p+1)/2}},  \frac{iy-1}{y}\right).
\]
 Note that this is an automorphism of $D_{e_1}$ which maps $\infty$ to
 $Q_i$. Moreover, $\sigma$ is defined over the field of definition of
 $D_{e_1}$, and we are done. 

We now prove the claim in the case that $p=3$. In this case the curves $D_{e_i}$ are
elliptic curves. The inverse $\varphi^{-1} \colon D_{e_2}\to D_{e_1}$
of $\varphi$ is also defined over $\FF_q$. It follows that
$Q:=\varphi^{-1}(\infty)\in D_{e_1}(\FF_q)$ is $\FF_q$-rational. Then
the translation $\tau_{Q-\infty}\colon P\mapsto P+Q-\infty$ is defined
over $\FF_q$ and sends the unique point $\infty\in D_{e_1}$ to
$Q$. Precomposing $\varphi$ with $\tau_{Q-\infty}$ gives an
$\FF_q$-isomorphism which sends $\infty\in D_{e_1}$ to $\infty\in
D_{e_2}$.

Therefore, without loss of generality we  let $\varphi\colon
D_{e_1}\to D_{e_2}$ be an $\FF_q$-isomorphism which sends the unique
point of $D_{e_1}$ at $\infty$ to the unique point of $D_{e_2}$ at
$\infty$.
Any such automorphism can be written as $\varphi(x,y)=(\nu_0 x+\nu_1,
\nu_2 y+\nu_3)$ with $\nu_i\in \FF_q$ and $\nu_2\nu_0\neq 0$. The
condition that $\varphi$ maps $D_{e_1}$ to $D_{e_2}$ is equivalent to
\begin{align}\label{eq:Fqmodels}
\nu_2^p &=\nu_2, &\nu_2 e_1&=e_2 \nu_0^2, \\
0&=2e_2 \nu_0\nu_1, &\nu_3^p-\nu_3&=e_2\nu_1^2.
\end{align}
It follows that $\nu_1=0$ and $\nu_2,\nu_3\in \FF_p$. The coefficient $e_2$ is given by
\[
e_2=\frac{\nu_2 e_1}{\nu_0^2}.
\]
This proves the first assertion of the lemma. The second assertion is clear since any element of
$\overline{\FF}_q^\ast$ is a square in $\overline{\FF}_q^\ast$.
\hfill $\qed$\end{proof}

We now compute an $\FF_q$-model of the curve $C_R/A$ for $A\subset P$
an elementary abelian subgroup of cardinality $p^h$ with $A\cap Z(P)=\{1\}$.
We prove this by induction on $h$, following Sect. 13 of \cite{GeerVlugt}. The following proposition is the key step in the
inductive argument. It is a corrected version of Proposition 13.5 of
\cite{GeerVlugt}, which extends to odd $p$ Proposition 9.1
of \cite{GeerVlugt} and is presented without proof. 
Indeed, the formula for the coordinate $V$ of the
quotient curve given in Proposition 13.5 of \cite{GeerVlugt} contains
an error that has been corrected here. We recall that $R(X)$ is an
additive polynomial of degree $p^h$ with leading coefficient $a_h \in
\mathbb{F}_{p^r} \subseteq \FF_q$.

\begin{proposition}\label{prop:pquotient}
Assume that  $h\geq 1$, and let
\[
\sigma(x,y):=\sigma_{b,c}(x,y)=(x+c, y+b+B_c(x))
\]
be an automorphism of $C_R$ with $c\neq 0$ and $b=B_c(c)/2$. Then the
quotient curve $C_R/\langle\sigma\rangle$ is isomorphic over $\FF_q$
to the smooth projective curve given by an affine equation
\begin{equation}\label{eq:pquotient}
V^p-V= \tilde{f}(U)=U \tilde{R}(U),
\end{equation}
where $\tilde{R}(U)\in \FF_q[U]$ is an additive polynomial of degree
$p^{h-1}$ with leading coefficient
\[
\tilde{a}=\begin{cases}
\frac{a_h}{c^{p-1}}&\text{ if } h\neq 1,\\
\frac{a_h}{2c^{p-1}}& \text{ if }h=1.
\end{cases}
\]
\end{proposition}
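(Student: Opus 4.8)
The plan is to realize $C_R/\langle\sigma\rangle$ explicitly as an Artin--Schreier cover of a projective line, following Sect.~13 of \cite{GeerVlugt}. Since $c\neq 0$, the point $\infty$ is the only fixed point of $\sigma=\sigma_{b,c}$, so $C_R\to C_R/\langle\sigma\rangle$ is ramified only over the image of $\infty$. By part \ref{lem:commutator-part2} of Lemma~\ref{lem:commutator} we have $\rho\in Z(P)$, so $\sigma$ commutes with $\rho$ and $\rho$ descends to an automorphism $\bar\rho$ of order $p$ of $C_R/\langle\sigma\rangle$. The subfield of $\FF_q(x)$ fixed by $\sigma$ is $\FF_q(u)$, where
\[
u=\prod_{j\in\FF_p}(x-jc)=x^p-c^{p-1}x ,
\]
and $C_R/\langle\sigma,\rho\rangle$ has function field $\FF_q(u)$. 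Hence $C_R/\langle\sigma\rangle\to\PP^1_u$ is a $\ZZ/p$-cover of Artin--Schreier type, and it suffices to exhibit a $\sigma$-invariant function $V$ on $C_R$ with $\bar\rho(V)=V+1$ and $V^p-V\in\FF_q(u)$.

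I would look for $V$ of the form $V=y+P(x)$. Since $\sigma(x,y)=(x+c,\,y+b+B_c(x))$, the condition $\sigma(V)=V$ is the linear difference equation $P(X+c)-P(X)=-b-B_c(X)$. Writing $\Delta_c g(X):=g(X+c)-g(X)$, one has $\Delta_c(X^{p^i+1})=cX^{p^i}+c^{p^i}X+c^{p^i+1}$ and $\Delta_c(X^2)=2cX+c^2$; using these together with $b=B_c(c)/2$ and $B_c(X)=\sum_{i=0}^{h-1}b_iX^{p^i}$ from Proposition~\ref{prop:B}, a direct computation shows that
\[
P(X)=-\frac1c\sum_{i=0}^{h-1}b_iX^{p^i+1}+\frac{b}{c^2}X^2
\]
is a solution. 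By part \ref{additive} of Proposition~\ref{prop:uniqueness} we have $P\in\FF_q[X]$, and $\deg P=p^{h-1}+1$: for $h\geq 2$ this is visible from the term $-b_{h-1}X^{p^{h-1}+1}/c$, and for $h=1$ the two $X^2$-terms combine to give $P(X)=-b_0X^2/(2c)$. With this $P$, the function $V=y+P(x)$ is $\sigma$-invariant and satisfies $\rho(V)=V+1$, and
\[
V^p-V=(y^p-y)+\bigl(P(x)^p-P(x)\bigr)=xR(x)+P(x)^p-P(x)=:F(x).
\]
The polynomial $F(X)=XR(X)+P(X)^p-P(X)$ lies in $\FF_q[X]$ and is invariant under $X\mapsto X+c$, hence $F(x)\in\FF_q[u]$; write $F(x)=\tilde f(u)$. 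Comparing $x$-degrees (the leading term of $F$ comes from $P(x)^p$ and has degree $p^h+p$) gives $\deg_U\tilde f=p^{h-1}+1$, so $C_R/\langle\sigma\rangle$ is $\FF_q$-isomorphic to the curve $V^p-V=\tilde f(U)$.

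The final step is to put $\tilde f$ in the form $U\tilde R(U)$. Because $R$ is additive and $P$ is built from the additive polynomial $B_c$, the polynomial $F(X)=XR(X)+P(X)^p-P(X)$ involves only the monomials $X^{p^i+1}$ $(0\le i\le h)$ and $X^{p^{i+1}+p}$ $(0\le i\le h-1)$. Expanding $u^{p^i+1}=(x^p-c^{p-1}x)^{p^i}(x^p-c^{p-1}x)$ and matching coefficients from the top $x$-degree downward, one finds $\tilde f(U)=\sum_{i=0}^{h-1}\tilde a_iU^{p^i+1}=U\tilde R(U)$ with $\tilde R(U)=\sum_{i=0}^{h-1}\tilde a_iU^{p^i}$ additive of degree $p^{h-1}$; equivalently, no monomial $U^l$ with $l\notin\{p^0+1,\dots,p^{h-1}+1\}$ occurs. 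The coefficient of $X^{p^h+p}$, the leading term of $P(X)^p$, then yields $\tilde a_{h-1}=-b_{h-1}^p/c^p=-a_h/c^{p-1}$ for $h\geq 2$ by (\ref{eq:B4}); for $h=1$ the extra factor $2^p=2$ coming from $\Delta_c(X^2)=2cX+c^2$ gives $\tilde a_0=-a_h/(2c^{p-1})$ instead. Replacing $V$ by $-V$ --- an $\FF_q$-automorphism preserving the shape $V^p-V=\tilde f(U)$ and sending $\tilde f$ to $-\tilde f$ --- flips the sign of $\tilde R$ and produces the leading coefficient $\tilde a$ stated in the proposition.

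The main obstacle is the assertion that only the monomials $U^{p^i+1}$ occur in $\tilde f$, i.e.\ that $F(x)$, a priori just some element of $\FF_q[u]$, in fact lies in the span of $\{u^{p^i+1}\}$. This is a finite but slightly delicate bookkeeping exercise, as the map from the coefficients of $\tilde f$ to those of $F(x)$ is only ``triangular'' rather than diagonal (for instance both $u$ and $u^p$ contribute an $X^p$-monomial). For $h\geq 2$ one may instead argue structurally: the centralizer of $\sigma$ in $P$ descends to a group of order $p^{2h-1}$ of automorphisms of $C_R/\langle\sigma\rangle$ acting on $\PP^1_u$ by translations $U\mapsto U+\gamma$, and since $p^{2h-1}$ lies above the big-action bound for a curve of genus $p^{h-1}(p-1)/2$, the classification of Lehr--Matignon \cite{LehrMatignon} forces $\tilde f$ into the required additive shape. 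For $h=1$ there is nothing to prove: $\tilde f$ then has degree $2$, and after absorbing a linear term into $U$ and a polynomial of the form $g(U)^p-g(U)$ into $V$, any such Artin--Schreier curve takes the form $V^p-V=\tilde a U^2$.
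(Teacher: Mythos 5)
Your setup coincides with the paper's: you introduce the invariant coordinates $U=X^p-c^{p-1}X$ and $V=y+P(x)$ with $P=-\Psi$ for the paper's $\Psi(X)=-\tfrac{B_c(c)}{2c^2}X^2+\tfrac{X}{c}B_c(X)$, you correctly identify the degree of $\tilde f$ in $U$, and your leading-coefficient computation (including the factor $2$ when $h=1$) is right. But there is a genuine gap exactly where you flag "the main obstacle": you never actually prove that $\tilde f(U)=U\tilde R(U)$ with $\tilde R$ additive, i.e.\ that $F(x)=xR(x)+P(x)^p-P(x)$, viewed as a polynomial in $u$, is supported only on the monomials $U^{p^i+1}$. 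Knowing that the $X$-support of $F$ is contained in the union of the $X$-supports of the $u^{p^i+1}$ is not enough; one must rule out contributions from all other powers $U^l$, and that top-down elimination uses the recursions (\ref{eq:B2})--(\ref{eq:B4}) in an essential way. The paper closes this gap with a specific algebraic move you are missing: substituting $XR(X)=\bigl(X(B(X)^p-B(X))-X^2R(c)\bigr)/c$ from the defining relation (\ref{eq:compareB}) reorganizes $V^p-V$ as
\begin{equation*}
V^p-V=\Theta(X)\,U+\gamma^pU^2,\qquad \Theta(X)=\frac{B(X)^p}{c^p}-\frac{B(c)^p}{c^{p+1}}X,
\end{equation*}
where $\Theta$ is both additive in $X$ and invariant under $X\mapsto X+c$, hence equals an additive polynomial $\theta(U)$; this makes $\tilde R(U)=\theta(U)+\gamma^pU$ additive by inspection, with no bookkeeping.

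Your two proposed fallbacks do not fill the hole. The Lehr--Matignon route is plausible in spirit (the descended translation group does satisfy the big-action inequality), but their normal form for big actions with second ramification group $\ZZ/p$ is $w^p-w=zR(z)+cz$, i.e.\ it permits a linear term, and it is a statement over the algebraically closed field; you would still have to kill the linear term and control the field of definition to land on the exact $\FF_q$-model claimed, which costs roughly the same computation you are trying to avoid. For $h=1$ the claim "there is nothing to prove" also skips a step: translating $U$ to remove a linear term from a degree-$2$ polynomial creates a constant term, whose absorption into $V$ over $\FF_q$ requires a trace condition; the clean way out is to check directly that the coefficient of $U$ in $\tilde f$ vanishes (it does, because $F$ has no $X^1$-monomial), or again to use the paper's identity above.
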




\begin{proof}
In the proof $c$ is fixed, therefore we write $B(X)$ for $B_c(X)$.
 We define new coordinates
\begin{equation}\label{eq:uvdef}
U=X^p-c^{p-1}X, \qquad V=-Y+\Psi(X)=-Y+\gamma X^2+\frac{X}{c}B(X),
\end{equation}
where $\gamma$ is  defined by
\[
\gamma=-\frac{B(c)}{2c^2}. 
\]
One easily checks that $U$ and $V$ are invariant under $\sigma$. The invariance of $V$ under
$\sigma$ is equivalent to the property
\[
\Psi(X+c)-\Psi(X)=B(X)+b.
\]
Here we use the definition of $b$ as $b=B(c)/2$. Since $U$ and $V$
generate a degree-$p$ subfield of the function field of $C_R$ and the
automorphism $\sigma$ has order $p$, $U$ and $V$ generate the function
field of the quotient curve $C_R/\langle\sigma\rangle$.

From the definition of $U$ and $V$ above, one can see that the Artin--Schreier automorphism $\rho$ induces an automorphism
$\tilde{\rho}(U, V)=(U, V-1)$ on the quotient curve $C_R/\langle
\sigma\rangle$. It follows that the quotient curve is also given by an
Artin--Schreier equation, which we may write as
\begin{equation}\label{eq:quotientcurve}
V^p - V = -Y^p+Y+\Psi^p(X)-\Psi(X) = -XR(X) +\Psi^p(X)-\Psi(X).
\end{equation}
It is clear that the right-hand side of (\ref{eq:quotientcurve}) can be written as a polynomial
$\tilde{f}(U)$ in $U$, since it is invariant under $\sigma$ by construction. Since the constant
term of $\Psi$ is zero, the right-hand side has a zero at $X=0$, so $\tilde{f}(U)\in U\FF_q[U]$.


Recall that part \ref{item:compare} of Proposition
\ref{prop:uniqueness} established
\begin{equation}\label{eq:B}
B(X)^p-B(X)=cR(X)+XR(c).
\end{equation}
This implies
\begin{equation*}
 XR(X)=\frac{X(B(X)^p-B(X))}{c}-\frac{X^2R(c)}{c}.
 \end{equation*}
It follows that
\begin{equation}\label{eq:pquotient1}
-XR(X)+\Psi^p(X)-\Psi(X)=\frac{B(X)^p}{c^p}U+\gamma^pX^{2p}+X^2\left(\frac{R(c)}{c}-\gamma\right).
\end{equation}
Using (\ref{eq:B}) one computes
\[
\gamma^pX^{2p}
+X^2\left(\frac{R(c)}{c}-\gamma\right)=
\gamma^pU^2-\frac{B(c)^p}{c^{p+1}}XU.
\]
Define
\[
\Theta(X)=\frac{B(X)^p}{c^p}-\frac{B(c)^p}{c^{p+1}}X.
\]
Since $\Theta$ is invariant under $\sigma$, we may write $\Theta(X)=\theta(U)$ as a polynomial in
$U$. Note that $\theta(0)=0$ since $\Theta(0)=0$. The additivity of the polynomials $B$ and $U$ in
the variable $X$ imply that the polynomial $\theta$ is additive in the variable $U$. It follows
that we may  write $\theta(U)=\sum_{i=0}^{h-1} \mu_i U^{p^i}$.
From (\ref{eq:B4}), we deduce that  the leading coefficient of $\theta$ is
\[
\mu_{h-1}=\frac{b_{h-1}^p}{c^{p}}=\frac{a_h}{c^{p-1}}.
\]
Altogether, we find
\[
V^p-V=\tilde{f}(U)=U\left(\theta(U)+\gamma^p U\right).
\]
Setting $\tilde{R}(U):=\theta(U)+\gamma^p U $, we see that
$\tilde{R}(U)$ is an additive polynomial in $U$. The statement about
the leading coefficient of $\tilde{R}(U)$ follows from the definitions
of $\theta$ and $\gamma$.
\hfill $\qed$\end{proof}

\begin{remark}\label{rem:isoclass} We discuss a crucial difference between even and odd characteristic:
Proposition \ref{prop:pquotient} is a statement about the automorphisms
$\sigma_{b,c}$ of order $p$ which are not contained in the center of $P$. For
$p$ odd all elements of $P\setminus Z(P)$ have order $p$. This is not
true for $p=2$, as we already noted in Remark
\ref{rem:p=2order}. Indeed all extraspecial $2$-groups contain
elements of order $4$. The precise structure of the extraspecial group
$P$ in the case that $p=2$ can be found in Theorem 4.1 of
\cite{GeerVlugt}.  The automorphisms $\sigma_{b,c}\in P\setminus Z(P)$
of order $2$ are easily recognized: they satisfy $c\neq 0$ but
$B_c(c)=0$. This observation considerably simplifies the computation
in the proof of Proposition \ref{prop:pquotient}.

The distinction between elements of order $2$ and $4$ in $P\setminus
Z(G)$ in characteristic $2$ yields a decomposition of the
polynomial $E$ (Theorem 3.4 of \cite{GeerVlugt}). There is no
analogous result in odd characteristic.
\end{remark}

Recall from Sect. \ref{sec:extraspecial} that
every maximal abelian subgroup ${\mathcal A}$ of $P$ is the inverse
image of a maximal isotropic subspace $\overline{A}$ of $W$. For any
such $\mathcal{A}$, let $\{c_1,\ldots, c_h\}$ be a basis of
$\overline{A}$ as described prior to Proposition
\ref{prop:subgroups}. Then every subgroup of ${\mathcal A}$ of order
$p^h$ that intersects  $Z(P)$ trivially is generated by
automorphisms of the form $\{ \sigma_{b_1, c_1}, \ldots , \sigma_{b_h,
  c_h} \}$ where $b_i^p - b_i = c_iR(c_i)$ for $1 \leq i \leq h$. In
fact, there is a one-to-one correspondence between such subgroups of
${\mathcal A}$ and sets of elements $\{b_1, \ldots, b_h\}$ satisfying
$b_i^p-b_i=c_iR(c_i)$. By Remark \ref{rem:b-identity} the elements in all these sets are of the form
$b_i=B_{c_i}(c_i)/2 + i$ with $i \in \FF_p$.


\begin{theorem}\label{thm:Aquotient}
Assume $h\geq 0$.  Let ${\mathcal A}$ be a maximal abelian subgroup of
$P$. Any subgroup $A\subset {\mathcal A}$ of order $p^h$ that
intersects the center $Z(P)$ of $P$ trivially gives rise to an
$\mathbb{F}_q$-isomorphism of the quotient curve $\overline{C}_A$ onto the
smooth projective curve given by the affine equation
\begin{equation*}
Y^p-Y = a_{\mathcal A}X^2.
\end{equation*}
Here
\[
a_{\mathcal A}=\frac{a_h}{2}\prod_{c\in
  \overline{A}\setminus \{0\}} c,
\]
for $h\geq 1$, where we recall that $a_h$ is the leading coefficient of $R$ and
$\overline{A}$ is the maximal isotropic subspace of $W$ that is the
image of $\mathcal{A}$ under the quotient map $P\to W$.
For $h=0$, we let 
\[
a_{\mathcal A}=a_0.
\]  
\end{theorem}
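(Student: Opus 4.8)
The plan is to proceed by induction on $h$. The base case $h=0$ is immediate: then $A$ is trivial, so $\overline{C}_A=C_R$ is defined by $Y^p-Y=XR(X)=a_0X^2$, which is the curve $D_{a_0}$ in the notation of Lemma \ref{lem:Fqmodels}, and $a_{\mathcal A}=a_0$. For $h\ge 1$ the idea is to peel off a single order-$p$ subgroup using Proposition \ref{prop:pquotient}. By Lemma \ref{lem:KR} (together with part \ref{prop:subgroups-part3} of Proposition \ref{prop:subgroups}), the $\FF_q$-isomorphism class of $\overline{C}_A$ does not depend on the choice of index-$p$ subgroup of $\mathcal A$ with trivial central intersection, so we may assume $A=\langle\sigma_{b_1,c_1},\dots,\sigma_{b_h,c_h}\rangle$, where $c_1,\dots,c_h$ is a basis of the maximal isotropic subspace $\overline A\subseteq W$ and $b_i=B_{c_i}(c_i)/2$; this is exactly the normal form needed to apply Proposition \ref{prop:pquotient} to the order-$p$ subgroup $A_1=\langle\sigma_{b_1,c_1}\rangle$ (and $c_1\ne 0$ since $A_1\cap Z(P)=\{1\}$). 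One observation to keep in mind throughout: by Lemma \ref{lem:Fqmodels} the coefficient in the target equation $Y^p-Y=(\,\cdot\,)X^2$ is determined only up to a factor in $\FF_p^\ast\cdot(\FF_q^\ast)^2$, so signs (controlled by Wilson's theorem) and squares in $\FF_q^\ast$ may be discarded at will.

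For the inductive step, Proposition \ref{prop:pquotient} gives an $\FF_q$-isomorphism $C_R/A_1\cong C_{\tilde R}$, where $\tilde R\in\FF_q[U]$ is additive of degree $p^{h-1}$ with leading coefficient $\frac{a_h}{c_1^{p-1}}$ if $h\ge 2$ and $\frac{a_h}{2c_1^{p-1}}$ if $h=1$, realized through the coordinate $U=X^p-c_1^{p-1}X$. The quotient group $\tilde A:=A/A_1$ acts faithfully on $C_{\tilde R}$ and is generated by the descents of $\sigma_{b_2,c_2},\dots,\sigma_{b_h,c_h}$, each nontrivial of order $p$ because $c_j\notin\FF_p c_1$. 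Since $U\circ\sigma_{b_j,c_j}=U+\phi_{c_1}(c_j)$ with $\phi_{c_1}(X):=X^p-c_1^{p-1}X=\prod_{i\in\FF_p}(X-ic_1)$, part \ref{item:stab} of Lemma \ref{lem:aut} identifies the descent of $\sigma_{b_j,c_j}$ with an automorphism $\sigma_{\tilde b_j,\tilde c_j}$ of $C_{\tilde R}$ having parameter $\tilde c_j=\phi_{c_1}(c_j)$. Because $\phi_{c_1}$ is additive with kernel $\FF_p c_1\subseteq\overline A$, the image of $\tilde A$ in $\widetilde W$ is $\phi_{c_1}(\overline A)$, a subspace of dimension $h-1$; this image is isotropic since $\tilde A$ is abelian (part \ref{lem:commutator-part2} of Lemma \ref{lem:commutator}), hence maximal isotropic, and the map $\tilde A\to\phi_{c_1}(\overline A)$ is an isomorphism, so $\tilde A$ meets $Z(\tilde P)$ trivially. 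Thus $C_{\tilde R}$ and $\tilde A$ satisfy the hypotheses of the theorem at level $h-1$, and the induction hypothesis applies; when $h=1$ there is nothing left to quotient, and $C_R/A_1=C_{\tilde R}$ is already $Y^p-Y=\frac{a_1}{2c_1^{p-1}}U^2$.

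It remains to match coefficients. The key combinatorial identity is $\prod_{\bar c\in\phi_{c_1}(\overline A)\setminus\{0\}}\bar c=\prod_{c\in\overline A\setminus\FF_p c_1}c$, which holds because the fiber of $\phi_{c_1}$ over a nonzero $\bar c$ is a nontrivial coset $c+\FF_p c_1$ and $\phi_{c_1}(c)=\prod_{i\in\FF_p}(c-ic_1)$ is precisely the product over that coset. Combined with $\prod_{i\in\FF_p^\ast}i=-1$, this yields $\prod_{c\in\overline A\setminus\{0\}}c=-c_1^{p-1}\prod_{\bar c\in\phi_{c_1}(\overline A)\setminus\{0\}}\bar c$. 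For $h\ge 2$ the induction hypothesis produces the coefficient $\frac{a_h}{2c_1^{p-1}}\prod_{\bar c\in\phi_{c_1}(\overline A)\setminus\{0\}}\bar c$, and substituting the identity above shows it differs from $\frac{a_h}{2}\prod_{c\in\overline A\setminus\{0\}}c$ only by the factor $-c_1^{-2(p-1)}\in\FF_p^\ast\cdot(\FF_q^\ast)^2$; for $h=1$ the same computation (now with $\overline A=\FF_p c_1$) compares $\frac{a_1}{2c_1^{p-1}}$ with $-\frac{a_1c_1^{p-1}}{2}$ and gives the same factor. In both cases Lemma \ref{lem:Fqmodels} identifies the quotient curve with $Y^p-Y=a_{\mathcal A}X^2$, completing the induction. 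I expect the main obstacle to be the second paragraph — checking that the descended group $\tilde A$ is again a maximal-isotropic-type subgroup of $\Aut^0(C_{\tilde R})$ with the parameters $\tilde c_j=\phi_{c_1}(c_j)$ — together with the coefficient bookkeeping of the last paragraph; noticing that everything need only be verified modulo $\FF_p^\ast\cdot(\FF_q^\ast)^2$ is what keeps these steps manageable.
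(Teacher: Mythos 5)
Your proof is correct and follows essentially the same route as the paper: induction on $h$, peeling off one order-$p$ automorphism via Proposition \ref{prop:pquotient}, identifying the descended subgroup with the image of $\overline{A}$ under $X^p-c^{p-1}X$ as a maximal isotropic subspace at level $h-1$, and invoking Lemma \ref{lem:KR} to make the choice of $A$ irrelevant. Your explicit coefficient bookkeeping --- matching $\frac{a_h}{2c_1^{p-1}}\prod_{\bar c}\bar c$ against $\frac{a_h}{2}\prod_c c$ only up to the factor $-c_1^{-2(p-1)}\in\FF_p^\ast\cdot(\FF_q^\ast)^2$ permitted by Lemma \ref{lem:Fqmodels} --- actually supplies detail the paper compresses into ``follows immediately from the formula for the leading coefficient.''
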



\begin{proof}
 We prove by induction on $h$ that there exists a subgroup $A\subset
 {\mathcal A}$ with $A\simeq (\ZZ/p\ZZ)^h$ and $Z(P)\cap A=\{1\}$ such
 that the quotient curve $\overline{C}_A=C_R/A$ is given over $\FF_q$
 by the equation stated in the theorem. The statement of the theorem
 follows from this using Lemma \ref{lem:KR}.

For $h=0$ the statement is true by definition.

Assume that $h\geq 1$ and  that the statement of the theorem holds for all additive polynomials
$R(X)$ of degree $p^{h-1}$. Fix a basis $\{ c_1,c_2,\ldots,c_h \}$ for the image of $\mathcal{A}$ in $W$. 
We may choose $b_h=B_{c_h}(c_h)/2$. As in Sect.
\ref{sec:extraspecial}, we write $\sigma_h(x,y)=\sigma_{b_h, c_h}(x,y)=(x+c_h, y+b_h+B_{c_h}(x)).$
Proposition \ref{prop:pquotient} implies that the quotient curve $C_{h-1}:=C_R/\langle
\sigma_h\rangle$ is given by an Artin--Schreier equation
\[
Y_{h-1}^p-Y_{h-1}=X_{h-1}R_{h-1}(X_{h-1}),
\]
where $R_{h-1}$ is an additive polynomial of degree $p^{h-1}$.

Since ${\mathcal A}$ is an abelian group, it follows that ${\mathcal A}_{h-1}:={\mathcal A}/\langle
\sigma_h\rangle\simeq (\ZZ/p\ZZ)^h$ is a maximal abelian subgroup of the Sylow $p$-subgroup
$P_{h-1}$ of $\Aut^0(C_{h-1})$. The definition of the coordinate $X_{h-1}$ as $X^p-c_h^{p-1}X$ in the proof
of Proposition \ref{prop:pquotient} implies that ${\mathcal A}_{h-1}$ corresponds to the maximal
isotropic subspace $\langle \overline{c}_1,\ldots, \overline{c}_{h-1}\rangle$ of
$W_{h-1}:=W/\langle c_h, c_h'\rangle$, where $\overline{c}_i=c_i^p-c_h^{p-1}c_i$ and $c_h'\in W$ is
an element with $\epsilon(c_i, c_h')=\delta_{i,h}$ as in Sect. \ref{sec:extraspecial}.

The induction hypothesis implies that there exists a subgroup $A_{h-1}\subset {\mathcal A}_{h-1}$
with $A_{h-1}\simeq (\ZZ/p\ZZ)^{h-1}$ and $A_{h-1}\cap Z(P_{h-1})=\{1\}$ such that the quotient
$C_{h-1}/A_{h-1}$ is given by
\[
Y_0^p-Y_0=a_{{\mathcal A}_{h-1}} X_0^2.
\]
We may choose $b_i$ satisfying $b_i^p-b_i=c_iR(c_i)$ for $i=1,
\ldots,h-1$ such that the images of $\sigma_{b_1,c_1},\ldots,
\sigma_{b_{h-1},c_{h-1}}$ in ${\mathcal A}_{h-1}$ generate $A_{h-1}$ (Remark \ref{rem:b-identity}). 
Put
$\sigma_i=\sigma_{b_i,c_i}$ for $i=1, \ldots,h-1$. Then $A:=\langle
\sigma_1,\ldots,\sigma_h\rangle$ satisfies
\[
C_R/A\simeq_{\FF_q} C_{h-1}/A_{h-1}.
\]
This concludes the induction proof.

The statement about $a_{\mathcal A}$ follows immediately from the formula for the leading
coefficient of the quotient curve given in Proposition \ref{prop:pquotient}.
%
%
\hfill $\qed$\end{proof}

\section{The zeta function of the curve $C_R$}\label{sec:zeta}

In this section, we describe the zeta function of the curve $C_R$ over the splitting field $\FF_q$
of the polynomial $E(X)$ defined in (\ref{eq:E}).

Let $C$ be a curve defined over a finite field $\FF_{p^s}$, and write $N_n=\# C(\FF_{p^{sn}})$ for
the number of points on $C$ over any extension $\FF_{p^{sn}}$ of $\FF_{p^s}$. Recall that the {\em
zeta function} of $C$, defined as
\[
Z_C(T)=\exp\left(\sum_{n\geq 1}\frac{N_nT^n}{n}\right),
\]
 is
a rational function with the following properties:
\begin{enumerate}
\item The zeta function may be written as
\[
Z_C(T)=\frac{L_{C, \FF_{p^s}}(T)}{(1-T)(1-p^sT)},
\]
where $L_{C, \FF_{p^s}}(T)\in \ZZ[T]$ is a polynomial of degree
$2g(C)$ with constant term $1$.
\item \label{alphas} Write $L_{C, \FF_{p^s}}(T)=\prod_{i=1}^{2g}(1-\alpha_i T)$ with $\alpha_i\in \CC$. After
    suitably ordering  the $\alpha_i$, we have
\[
\alpha_{2g-i}=\frac{p^s}{\alpha_i}, \qquad |\alpha_i|=p^{s/2}.
\]
\item \label{numberofpoints} For each $n$, we have
\begin{equation*}
N_n =  \#C(\FF_{p^{sn}}) = 1 + p^{sn} - \sum_{i =1}^{2g} \alpha_i^n.
\end{equation*}

\item \label{extensions} If
\begin{equation*}
L_{C, \FF_{p^s}}(T) = \prod_{i=1}^{2g}(1-\alpha_i T)
\end{equation*}
 as above, then for any $r \geq 0$, we have
 \begin{equation*}
L_{C, \FF_{p^{rs}}}(T) = \prod_{i=1}^{2g}(1-\alpha_i^r T).
\end{equation*}
\end{enumerate}
The numerator $L_{C, \FF_{p^s}}(T)$ of the zeta function $Z_C(T)$ over $\FF_{p^s}$ is called the
{\em $L$-polynomial} of $C/\FF_{p^s}$.  If the field is clear from the context, we sometimes omit
it from the notation and simply write $L_C(T)$.

Recall that the Hasse--Weil bound asserts that
\[
|\# C(\FF_{p^s})-(p^s+1)|\leq 2p^{s/2}g(C).
\]
A curve $C/\FF_{p^s}$ is called {\em maximal} if $\# C(\FF_{p^s})=p^s+1+2p^{s/2}g(C)$ and {\em
minimal} if $\# C(\FF_{p^s})=p^s+1-2p^{s/2}g(C)$. Since the number of points on a curve must be an
integer, if $C$ is a maximal curve, then $s$ must be even. Furthermore, using properties
\ref{alphas} and \ref{numberofpoints} above, it is clear that $C$ is maximal if $\alpha_j =
-p^{s/2}$ for each $1 \leq j \leq 2g(C)$, and $C$ is minimal if $\alpha_j = p^{s/2}$ for each $1
\leq j \leq 2g(C)$.

Assume that $s$ is even and that $\FF_{p^s}$ is an extension of $\FF_q$. In the notation of
Proposition \ref{prop:quadric}, we have $w_s=\dim_{\FF_p} W=2h$ (Corollary \ref{cor:W}). Since the
curve $C_R$ has genus $p^h(p-1)/2$, Proposition \ref{prop:quadric} implies that $C_R$ is either
maximal or minimal in this case. Moreover, one easily sees that if either $s$ is odd or $\FF_{p^s}$
does not contain $\FF_q$, then $C_R$ is neither maximal nor minimal. The following proposition asserts that this
almost determines the zeta function of $C_R$ over $\FF_q$. The statement is an extension to odd
characteristic of Theorems 10.1 and 10.2 of \cite{GeerVlugt}. Note that the statement for odd
characteristic is simpler than that for characteristic $2$.

\begin{proposition}\label{prop:zeta}
Let $\FF_{p^s}$ be an extension of $\FF_q$, the splitting field of $E(X)$. Write $g=p^h(p-1)/2$ for
the genus of $C_R$.
\begin{enumerate}
\item \label{zeta:part1} If $s$ is even, the $L$-polynomial of $C_R$ is
\[
L_ {C_R}(T)=
(1\pm p^{s/2}T)^{2g}.
\]
\item \label{zeta:part2} If $s$ is odd, the $L$-polynomial of $C_R$ is
\[
L_ {C_R}(T)=
(1\pm p^sT^2)^{g}.
\]
\end{enumerate}
\end{proposition}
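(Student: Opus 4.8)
The plan is to deduce the structure of $L_{C_R}(T)$ from two inputs: the decomposition of the Jacobian in Proposition~\ref{prop:KR} together with the explicit model of the quotient curve in Theorem~\ref{thm:Aquotient}, and the point count from Proposition~\ref{prop:quadric}. First I would use Proposition~\ref{prop:KR}, which gives $J_R \sim_{\FF_q} \Jac(\overline{C}_{A_p})^{p^h}$, and Theorem~\ref{thm:Aquotient}, which identifies $\overline{C}_{A_p}$ with the curve $Y^p - Y = a_{\mathcal A} X^2$ over $\FF_q$. This curve has genus $(p-1)/2$, so $L_{\overline{C}_{A_p}, \FF_q}(T)$ has degree $p-1$; by property~\ref{extensions} of zeta functions and the isogeny decomposition, $L_{C_R, \FF_{p^s}}(T) = L_{\overline{C}_{A_p}, \FF_{p^s}}(T)^{p^h}$, where the latter is obtained from $L_{\overline{C}_{A_p}, \FF_q}(T)$ by raising the reciprocal roots to the power $s/[\FF_q:\FF_p]$ (or, more simply, one can work directly over $\FF_{p^s}$ since $\FF_{p^s} \supseteq \FF_q$). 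Hence it suffices to understand the reciprocal roots $\alpha_i$ of $L_{\overline{C}_{A_p}, \FF_{p^s}}(T)$.

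Next I would pin down those roots using Proposition~\ref{prop:quadric} applied to the curve $D := \overline{C}_{A_p}$ itself, or alternatively applied to $C_R$ directly. The cleanest route: apply Proposition~\ref{prop:quadric} to $C_R$ over $\FF_{p^s}$ with $w_s = 2h$ (Corollary~\ref{cor:W}) and $n_s = s - 2h$. When $s$ is even, $n_s$ is even, so $\#C_R(\FF_{p^s}) = p^s + 1 \pm (p-1)\sqrt{p^{s+2h}} = p^s + 1 \pm (p-1)p^h p^{s/2} = p^s + 1 \mp 2g\,(\mp p^{s/2})$... more precisely $\#C_R(\FF_{p^s}) = p^s + 1 \pm 2g p^{s/2}$ since $2g = p^h(p-1)$. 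By the remark following the zeta-function properties, $C_R$ is then maximal or minimal over $\FF_{p^s}$, which forces every reciprocal root to equal $\mp p^{s/2}$, giving $L_{C_R, \FF_{p^s}}(T) = (1 \pm p^{s/2}T)^{2g}$. This is part~\ref{zeta:part1}. For part~\ref{zeta:part2}, when $s$ is odd, I would pass to the quadratic extension $\FF_{p^{2s}}$: since $2s$ is even, part~\ref{zeta:part1} gives $L_{C_R,\FF_{p^{2s}}}(T) = (1 \pm p^{s}T)^{2g}$, so each reciprocal root $\beta_i$ over $\FF_{p^{2s}}$ equals $\mp p^s$; hence each reciprocal root $\alpha_i$ over $\FF_{p^s}$ satisfies $\alpha_i^2 = \mp p^s$, i.e.\ $\alpha_i = \pm\sqrt{\mp p^s}$. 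Since the $\alpha_i$ come in conjugate pairs $\{\alpha_i, -\alpha_i\}$ (by the functional equation, property~\ref{alphas}, noting $p^s/\alpha_i = -\alpha_i$ when $\alpha_i^2 = -p^s$, and similarly handling the $+$ case via a sign bookkeeping), the factors pair up to give $1 \pm p^s T^2$ repeated $g$ times, i.e.\ $L_{C_R,\FF_{p^s}}(T) = (1 \pm p^s T^2)^g$.

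The main obstacle I anticipate is the careful sign and root-of-unity bookkeeping in part~\ref{zeta:part2}: one needs to verify that the $2g$ reciprocal roots over $\FF_{p^s}$ genuinely organize into $g$ conjugate pairs yielding the clean factor $(1 \pm p^s T^2)^g$, rather than some mixture. This requires using that $L_{C_R,\FF_{p^s}}(T) \in \ZZ[T]$ (integrality) together with the functional equation to rule out, say, having $k$ roots equal to $\sqrt{-p^s}$ and $2g-k$ equal to $-\sqrt{-p^s}$ with $k \neq g$; integrality of the coefficients (in particular the absence of a $T^{\text{odd}}$ term with the value forced by $\#C_R(\FF_{p^s})$, which Proposition~\ref{prop:quadric} says is exactly $p^s + 1$ when $n_s$ is odd, hence $\sum \alpha_i = 0$) forces $k = g$. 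One also must track which sign ($+$ or $-$) occurs, and note that it is the same sign that appears in part~\ref{zeta:part1} for $\FF_{p^{2s}}$, governed ultimately by whether $(-1)^{n_s/2}$ times the product of the diagonal coefficients of $Q_s$ is a square in $\FF_p^\ast$; this is the content hidden in the ``$\pm$'' and I would state that the sign is determined by the quadratic form $Q_s$ exactly as in Proposition~\ref{prop:quadric}, deferring a fully explicit determination (which is the subject of the later maximality discussion) if the statement only claims ``$\pm$''.
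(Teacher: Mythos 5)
Your proposal is correct and, once you settle on ``the cleanest route,'' it is essentially the paper's own proof: Proposition~\ref{prop:quadric} with $w_s=2h$ gives $\#C_R(\FF_{p^s})=p^s+1\pm 2gp^{s/2}$ for $s$ even (forcing all reciprocal roots equal since $|\alpha_i|=p^{s/2}$), and for $s$ odd one passes to $\FF_{p^{2s}}$, uses $\alpha_i^2=\pm p^s$, the functional equation $\alpha_{2g-i}=p^s/\alpha_i$, and $\sum_i\alpha_i=0$ to pair the roots into $g$ factors $1\pm p^sT^2$. The opening detour through Proposition~\ref{prop:KR} and Theorem~\ref{thm:Aquotient} is harmless but unnecessary here; the paper reserves that machinery for Theorem~\ref{thm:zeta}, where the sign is actually pinned down.
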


\begin{proof} \hfill
\begin{enumerate}
 \item 
Let $\alpha_1, \ldots, \alpha_{2g}$ be the reciprocal zeros of the $L$-polynomial of $C$ over
$\FF_{p^s}$, where we order the $\alpha_i$ such that $\alpha_i\alpha_{2g-i}=p^s$.

We first assume that $s$ is even. Since $\FF_{p^s}$ is an extension of $\FF_q$, we have
\[
N_1=\#C_R(\FF_{p^s})=1+p^s\pm 2g p^{s/2}=1+p^s-\sum_{i=1}^{2g}\alpha_i.
\]
Since $|\alpha_i|=p^{s/2}$ we conclude that
\[
\alpha_1=\cdots=\alpha_{2g}=\pm p^{s/2}.
\]
This proves part \ref{zeta:part1}.

\item
We now assume that $s$ is odd.  Proposition \ref{prop:quadric} implies that
\begin{equation}\label{eq:sodd}
N_1=\#C_R(\FF_{p^s})=1+p^s=1+p^s-\sum_{i=1}^{2g}\alpha_i.
\end{equation}
Since the reciprocal roots of the $L$-polynomial of $C$ over $\FF_{p^{2s}}$ are $\alpha_j^2$,
we conclude from part \ref{zeta:part1} that either $\alpha_j^2=p^s$ or $\alpha_j^2=-p^s$ for
all $j$.

If $\alpha_j^2=-p^s$ for all $j$, then $\alpha_j=\pm \I p^{s/2}$, where $\I$ is a primitive $4$th
root of unity. It follows that $\alpha_{2g-j}=p^s/\alpha_j=-\alpha_j$. Hence
\[
(1-\alpha_jT)(1-\alpha_{2g-j}T)=1+p^sT^2.
\]

Assume now that $\alpha_j^2=p^s$ for all $j$. In this case we have $\alpha_j=\pm p^{s/2}$ and
$\alpha_{2g-j}=p^s/\alpha_j=\alpha_j$. Let $m= \# \{ 1\leq j\leq g : \alpha_j=p^{s/2}\}$. It
follows from (\ref{eq:sodd}) that
\[
0=\#C_R(\FF_{p^s})-(p^s+1)=p^{s/2}(-2m+2(g-m)).
\]
We conclude that $2g=4m$, i.e., $m=g/2$ (in particular, $g$ is even).  For the $L$-polynomial
of $C_R$ over $\FF_{p^s}$ we find
\[
L_{C_R}(T) = (1-p^sT^2)^g,
\]
as claimed in part \ref{zeta:part2}.
\end{enumerate}
\hfill $\qed$\end{proof}

\begin{remark}\label{rem:minmax} \hfill
\begin{enumerate} 
\item The proof of part \ref{zeta:part2} of Proposition \ref{prop:zeta} shows that the
    case $L_ {C_R}(T)=(1- p^sT^2)^{g}$ can only occur when $g$ is even, i.e., if $p\equiv
    1 \ (\bmod{4})$.
\item \label{maxcharacterize} Assume that $s$ is even. Then $\alpha_j
  = p^{s/2}$ or $\alpha_j = - p^{s/2}$ for all $1 \leq j \leq 2g$, and
  therefore $C_R$ is either minimal or maximal. If $C_R$ is minimal
  over $\FF_{p^s}$, each $\alpha_j = p^{s/2}$. The curve $C_R$
  therefore remains minimal over each extension field $\FF_{p^{sf}}$.
  If $C_R$ is maximal over $\FF_{p^s}$, each $\alpha_j =
  -p^{s/2}$. The reciprocal roots of the $L$-polynomial over $\FF_{p^{sf}}$ are
  $\alpha_j^f=(-1)^fp^{sf/2}$. We conclude that $C_R$ is maximal over
  $\FF_{p^{sf}}$ if $f$ is odd and minimal if $f$ is even.
\end{enumerate}
\end{remark}

To determine the zeta function of $C_R$, it remains to decide when the different cases occur.
The following result, which is an immediate corollary of Proposition \ref{prop:KR}, reduces
this problem to the case $h=0$.

\begin{corollary}\label{cor:KR}
Let $A\simeq (\ZZ/p\ZZ)^h\subset P$ be a subgroup with $A\cap
Z(P)=\{0\}$. Write $\overline{C}_{A}=C_R/A$.  Then
\[
L_{C_R, \FF_q}(T)=L_{\overline{C}_{A}, \FF_q}(T)^{p^{h}}.
\]
\end{corollary}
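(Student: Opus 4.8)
The plan is to deduce the identity from Proposition \ref{prop:KR} together with the fact that, for a curve $C$ over a finite field $\FF$, the $L$-polynomial $L_{C,\FF}(T)$ is an isogeny invariant of its Jacobian. Concretely, $L_{C,\FF_q}(T)$ is the reciprocal characteristic polynomial of the $q$-power Frobenius acting on $\Jac(C)$ (equivalently, on the Tate module $T_\ell\Jac(C)$ for any prime $\ell\neq p$), and this polynomial depends only on the $\FF_q$-isogeny class of $\Jac(C)$. Hence, if $J$ and $J'$ are abelian varieties over $\FF_q$ with $J\sim_{\FF_q}(J')^{m}$, then the reciprocal Frobenius characteristic polynomial of $J$ is the $m$-th power of that of $J'$.

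The first step is to observe that the subgroup $A$ in the statement sits inside a maximal abelian subgroup of $P$. Since $Z(P)=\langle\rho\rangle$ is central and $A\cap Z(P)=\{1\}$, the group $\mathcal A:=\langle A,\rho\rangle$ is abelian of order $p^{h+1}$; by part \ref{prop:subgroups-part1} of Proposition \ref{prop:subgroups} every maximal abelian subgroup of $P$ has order $p^{h+1}$, so $\mathcal A$ is maximal abelian and $A\subset\mathcal A$ is a subgroup of order $p^h$ meeting $Z(P)$ trivially. Applying Lemma \ref{lem:KR} to $A$ and to the distinguished subgroup $A_p\subset\mathcal A$ of Proposition \ref{prop:subgroups} yields an $\FF_q$-isomorphism $\overline{C}_A\simeq\overline{C}_{A_p}$, and therefore an $\FF_q$-isomorphism of Jacobians $\Jac(\overline{C}_A)\simeq\Jac(\overline{C}_{A_p})$.

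Next I would invoke Proposition \ref{prop:KR}, which gives an $\FF_q$-isogeny $J_R\sim_{\FF_q}\Jac(\overline{C}_{A_p})^{p^{h}}$; combined with the previous step this becomes $J_R\sim_{\FF_q}\Jac(\overline{C}_A)^{p^{h}}$. Applying the isogeny-invariance of the Frobenius characteristic polynomial recorded above with $J=J_R$, $J'=\Jac(\overline{C}_A)$ and $m=p^h$, and recalling that $L_{C_R,\FF_q}(T)$ and $L_{\overline{C}_A,\FF_q}(T)$ are by definition the reciprocal Frobenius characteristic polynomials of $J_R$ and $\Jac(\overline{C}_A)$, gives exactly $L_{C_R,\FF_q}(T)=L_{\overline{C}_A,\FF_q}(T)^{p^{h}}$.

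The argument is essentially immediate, so there is no serious obstacle; the only point meriting a little care — and the step I would spell out most explicitly — is the reduction from the arbitrary $A$ in the statement to the particular subgroup $A_p$ occurring in Proposition \ref{prop:KR}, which is handled by embedding $A$ in the maximal abelian subgroup $\langle A,\rho\rangle$ and quoting Lemma \ref{lem:KR}. Alternatively one could rerun the Kani--Rosen decomposition (Theorem \ref{thm:KR}) directly with $\mathcal A=\langle A,\rho\rangle$, but invoking Lemma \ref{lem:KR} is cleaner.
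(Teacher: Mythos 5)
Your proof is correct and follows essentially the same route as the paper: combine the isogeny $J_R\sim_{\FF_q}\Jac(\overline{C}_{A_p})^{p^h}$ from Proposition \ref{prop:KR} with the fact that $\FF_q$-isogenous abelian varieties have the same reciprocal Frobenius characteristic polynomial (the paper cites the cohomological description of the zeta function for this). Your extra care in reducing an arbitrary $A$ to the distinguished $A_p$ via $\mathcal A=\langle A,\rho\rangle$ and Lemma \ref{lem:KR} is a detail the paper leaves implicit, but it is not a different method.
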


\begin{proof}
This is an immediate consequence of Proposition \ref{prop:KR}, since abelian varieties which
are isogenous over $\FF_q$ have the same zeta function over $\FF_q$. This follows for example
from the cohomological description of the zeta function in Sect. 1 of \cite{Katz}.
\hfill $\qed$\end{proof}

Recall from Theorem \ref{thm:Aquotient} that the curve $\overline{C}_A$ from Corollary
\ref{cor:KR} is a curve of genus $(p-1)/2$ given by an affine equation of the form
\[
Y^p-Y=aX^2,
\]
for some $a\in \FF_q^\ast$. This corresponds to the case $h=0$.  All curves of this form are
isomorphic over $\overline{\FF}_q$, and the different $\FF_q$-models are described in Lemma
\ref{lem:Fqmodels}. The next result determines the $L$-polynomials of the curves
$\overline{C}_A$. In the literature one finds many papers discussing the zeta function of
similar curves using Gauss sums (for example \cite{Coultier}, \cite{Katz},  \cite{Yui}.) We
give a self-contained treatment here based on the results of Sect. \ref{sec:pointcount}.


\begin{theorem}\label{thm:zeta}
Consider the curve $C_R$ over some extension of $\FF_q$ and put $g=g(C_R)$. For $h\geq 0$ we
put $a=a_{\mathcal A}$ with $a_{\mathcal A}$ as given in Theorem \ref{thm:Aquotient} for some
choice of $\mathcal{A}$. 
\begin{enumerate}
\item If $p\equiv 1 \ (\bmod{4})$, then the $L$-polynomial of $C_R$ over $\FF_{p^s}$ is given by
\[
L_{C_R, \FF_{p^s}}(T)=
\begin{cases}
(1-p^sT^2)^g & \textup{ if $s$ is odd},\\
(1-p^{s/2}T)^{2g}&
  \textup{ if $s$ is even and $a$ is a square in $\FF_{p^s}^\ast$},\\
  (1+p^{s/2}T)^{2g}& \textup{ if $s$ is even and $a$ is a nonsquare in
    $\FF_{p^s}^\ast$}.
\end{cases}
\]
\item If $p\equiv 3 \ (\bmod{4})$, then the $L$-polynomial of $C_R$ over $\FF_{p^s}$ is given by
\[
L_{C_R, \FF_{p^s}}(T)=
\begin{cases}
(1+p^sT^2)^{g}& \textup{ if $s$ is odd},\\
(1-p^{s/2}T)^{2g} & \textup{ if $s\equiv 0 \ (\bmod{4})$ and $a$ is a square in $\FF_{p^s}^\ast$},\\
(1+p^{s/2}T)^{2g}& \textup{ if $s\equiv 0 \ (\bmod{4})$ and $a$ is a nonsquare in $\FF_{p^s}^\ast$},\\
(1+p^{s/2}T)^{2g} & \textup{ if $s\equiv 2 \ (\bmod{4})$ and $a$ is a square in $\FF_{p^s}^\ast$},\\
(1-p^{s/2}T)^{2g}& \textup{ if $s\equiv 2 \ (\bmod{4})$ and $a$ is a nonsquare in $\FF_{p^s}^\ast$}.
\end{cases}
\]
\end{enumerate}
\end{theorem}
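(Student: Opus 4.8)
The plan is to reduce the computation to the case $h=0$ via the Kani--Rosen decomposition and then to evaluate a single quadratic-form point count. By Corollary \ref{cor:KR}, for a subgroup $A\subset P$ with $A\simeq(\ZZ/p\ZZ)^h$ and $A\cap Z(P)=\{1\}$ one has $L_{C_R,\FF_q}(T)=L_{\overline{C}_A,\FF_q}(T)^{p^h}$, and by property \ref{extensions} of the $L$-polynomial this identity persists over every extension $\FF_{p^s}$ of $\FF_q$; since $g(C_R)=p^h\,g(\overline{C}_A)$ it therefore suffices to determine $L_{\overline{C}_A,\FF_{p^s}}(T)$, where by Theorem \ref{thm:Aquotient} the curve $\overline{C}_A$ is the genus-$\tfrac{p-1}{2}$ curve $D_a\colon Y^p-Y=aX^2$ with $a=a_{\mathcal A}\in\FF_q\subseteq\FF_{p^s}$ (whose square class in $\FF_{p^s}^\ast$ does not depend on the choice of $\mathcal A$, by Lemmas \ref{lem:KR} and \ref{lem:Fqmodels} together with the fact that for $s$ even every element of $\FF_p^\ast$ is a square in $\FF_{p^s}^\ast$). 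Moreover, Proposition \ref{prop:zeta} applied directly to $C_R$ already shows that $L_{C_R,\FF_{p^s}}(T)$ equals $(1\pm p^{s/2}T)^{2g}$ when $s$ is even and $(1\pm p^sT^2)^{g}$ when $s$ is odd, the minus sign in the odd case being possible only when $g$ is even, i.e.\ $p\equiv1\pmod4$ (Remark \ref{rem:minmax}). So the entire content of the theorem is the determination of these signs, equivalently of whether $D_a$ is maximal or minimal over $\FF_{p^s}$ in the even case.

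For $s$ even, since $W=\{0\}$ for $D_a$, Proposition \ref{prop:quadric} together with Theorem \ref{thm:quadric} (applied to the $s$-variable form $x\mapsto\Tr_{\FF_{p^s}/\FF_p}(ax^2)$) says that $D_a$ is maximal over $\FF_{p^s}$ exactly when $(-1)^{s/2}\cdot\mathrm{disc}\big(\Tr_{\FF_{p^s}/\FF_p}(aX^2)\big)\in(\FF_p^\ast)^2$, and minimal otherwise. I would evaluate this discriminant modulo squares in $\FF_p^\ast$ via two elementary observations. First, comparing Gram matrices with respect to an $\FF_p$-basis of $\FF_{p^s}$ shows $\mathrm{disc}\big(\Tr_{\FF_{p^s}/\FF_p}(aX^2)\big)\equiv N_{\FF_{p^s}/\FF_p}(a)\cdot\mathrm{disc}\big(\Tr_{\FF_{p^s}/\FF_p}(X^2)\big)$, and since $N_{\FF_{p^s}/\FF_p}(a)^{(p-1)/2}=a^{(p^s-1)/2}$, the norm is a square in $\FF_p^\ast$ iff $a$ is a square in $\FF_{p^s}^\ast$. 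Second, the bare trace form $\Tr_{\FF_{p^s}/\FF_p}(X^2)$ has non-square discriminant whenever $s$ is even: factoring $\Tr_{\FF_{p^s}/\FF_p}=\Tr_{\FF_{p^{s/2}}/\FF_p}\circ\Tr_{\FF_{p^s}/\FF_{p^{s/2}}}$ and diagonalising the rank-$2$ inner form as $\langle 2,2\mu\rangle$ over $\FF_{p^{s/2}}$ (with $\mu$ a non-square there) yields $\mathrm{disc}\big(\Tr_{\FF_{p^s}/\FF_p}(X^2)\big)\equiv N_{\FF_{p^{s/2}}/\FF_p}(\mu)$, and $N_{\FF_{p^{s/2}}/\FF_p}(\mu)^{(p-1)/2}=\mu^{(p^{s/2}-1)/2}=-1$. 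Feeding these into the criterion above, together with the fact that $-1$ is a square in $\FF_p^\ast$ iff $p\equiv1\pmod4$, and sorting by $p\bmod4$ and $s\bmod4$, reproduces all the even-$s$ entries of the theorem. (Alternatively, the single needed bit---whether $D_1$ is maximal over $\FF_{p^2}$---can be obtained by the completely elementary count $\#D_1(\FF_{p^2})=1+p\big(1+(p-1)(1+(-1)^{(p+1)/2})\big)$ and then propagated to all even $s$ by base change, using that a constant quadratic twist negates all Frobenius eigenvalues.)

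For $s$ odd, Proposition \ref{prop:quadric} gives $\#D_a(\FF_{p^s})=1+p^s$, so the Frobenius eigenvalues $\alpha_i$ sum to $0$ and, by Proposition \ref{prop:zeta}, $L_{D_a,\FF_{p^s}}(T)=(1\pm p^sT^2)^{(p-1)/2}$; if $p\equiv3\pmod4$ then $(p-1)/2$ is odd and Remark \ref{rem:minmax} forces the plus sign. If $p\equiv1\pmod4$ I would pass to $\FF_{p^{2s}}$: since $a\in\FF_{p^s}$, one has $a^{(p^{2s}-1)/2}=\big(a^{p^s-1}\big)^{(p^s+1)/2}=1$, so $a$ is a square in $\FF_{p^{2s}}^\ast$ and the even case shows $D_a$ is minimal over $\FF_{p^{2s}}$; hence $\alpha_i^2=p^s$ for all $i$, and since $\sum\alpha_i=0$ exactly half of the $\alpha_i$ equal $p^{s/2}$ and half equal $-p^{s/2}$, giving $L_{D_a,\FF_{p^s}}(T)=(1-p^sT^2)^{(p-1)/2}$. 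Raising to the $p^h$-th power and rewriting $(p-1)/2$ in terms of $g=p^h(p-1)/2$ then yields the stated $L$-polynomials of $C_R$. The one genuinely nontrivial step is the discriminant evaluation above (equivalently, pinning down the maximality of $D_1$ over one quadratic field); everything else---including keeping the signs in Theorem \ref{thm:quadric} consistent with the congruences of $p$ and $s$, and tracking the functional-equation pairing $\alpha_i\alpha_{2g-i}=p^s$---is routine bookkeeping with Propositions \ref{prop:quadric} and \ref{prop:zeta} and elementary finite-field arithmetic.
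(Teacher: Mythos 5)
Your proof is correct, and its overall architecture --- reduction to $h=0$ via Corollary \ref{cor:KR}, then pinning down the signs left open by Proposition \ref{prop:zeta} --- matches the paper's; but the way you determine the signs is genuinely different. The paper anchors everything in a single explicit count of $\#D_1(\FF_{p^2})$: it identifies $\ker(\Tr_{\FF_{p^2}/\FF_p})\setminus\{0\}$ as the odd powers of an element of order $2(p-1)$, reads off whether these are squares according to $p\bmod 4$, propagates to every $s$ through the Frobenius eigenvalues, and handles the nonsquare twist by the complementary count $\# M^{\mathrm{nsq}}=p^{s-1}-1-\# M^{\mathrm{sq}}$. You instead apply Joly's formula (Theorem \ref{thm:quadric}) uniformly for each even $s$ and express the sign as the square class of $(-1)^{s/2}\operatorname{disc}\bigl(\Tr_{\FF_{p^s}/\FF_p}(aX^2)\bigr)$, evaluated via $\operatorname{disc}(\Tr(aX^2))\equiv N_{\FF_{p^s}/\FF_p}(a)\cdot\operatorname{disc}(\Tr(X^2))$ together with the tower $\FF_{p^s}/\FF_{p^{s/2}}/\FF_p$; the odd-$s$ case you settle by descent from $\FF_{p^{2s}}$. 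I verified the key computations (the Gram-matrix identity $G_a=G_1M_a$ giving the norm factor; the relative trace form of a quadratic extension diagonalizing as $\langle 2,2\mu\rangle$ with $\mu$ a nonsquare, whence $N_{\FF_{p^{s/2}}/\FF_p}(\mu)^{(p-1)/2}=-1$), and the resulting case table reproduces all entries of the theorem. What your route buys is transparency: the dependence on $p\bmod 4$ and $s\bmod 4$ is visibly the square class of $(-1)^{s/2}$ times that of the trace-form discriminant, and the nonsquare twist needs no separate treatment. What the paper's route buys is self-containedness: it uses only one elementary count over $\FF_{p^2}$ plus eigenvalue bookkeeping, whereas your argument implicitly invokes the fact that the discriminant of the orthogonal transfer of $\langle c\rangle$ along the trace is $N(c)$ times the discriminant of the trace form --- true and standard, but worth stating explicitly (or verifying by writing out the Gram matrix in a basis adapted to the tower) in a final write-up.
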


\begin{proof}
Corollary \ref{cor:KR} implies that it suffices to consider the case $h=0$. To prove the theorem we
may therefore assume that $R(X)=aX$.  We label the corresponding curve $D_a$ as we do in Lemma
\ref{lem:Fqmodels}.

\bigskip\noindent
{\bf Case 1:} The element $a$ is a square in $\FF_{p^s}^\ast$.

Then Lemma \ref{lem:Fqmodels} implies that $D_a$ is isomorphic over $\FF_q$ to the curve $D_1$
given by the affine equation $Y^p-Y=X^2$. Since $D_1$ is defined over $\FF_p$, we compute its
$L$-polynomial over $\FF_p$. The argument that we use here proceeds in the same manner as in the
proof of Proposition \ref{prop:quadric}. However, since both the polynomial $R(X)$ and the field
are very simple, we do not need to consider the quadric $Q$ considered in that proof explicitly.

As in the proof of Proposition \ref{prop:zeta}, it suffices to determine the number $N_2$ of
$\FF_{p^2}$-rational points of the curve $D_1$. We have $p+1$ points with $x\in \{0, \infty\}$. As
in the proof of Proposition \ref{prop:quadric}, the $\FF_{p^2}$-points with $x\neq 0, \infty$
correspond to squares $z=x^2$ with $\Tr_{\FF_{p^2}/\FF_p}(z)=0$. Every such element $z$ yields
exactly $2p$ rational points.  Since $\Tr_{\FF_{p^2}/\FF_p}(z)=z+z^p$, the nonzero elements of
trace zero are exactly the elements with $z^{p-1}=-1$. Choosing an element $\zeta\in
\FF_{p^2}^\ast$ of order $2(p-1)$, we conclude that the nonzero elements with trace zero are
\[
\ker(\Tr_{\FF_{p^2}/\FF_p})\setminus\{0\}=\{\zeta^{2j+1} : j=0, \ldots, p-2\}.
\]
First suppose that $p\equiv 3 \ (\bmod{4})$. Then all the elements of $\ker(\Tr_{\FF_{p^2}/\FF_p})$ are
squares in $\FF_{p^2}$, 
so
\[
\#D_1(\FF_{p^2})=1+p+(p-1)2p=1+p^2+(p-1)p.
\]
As in the proof of Proposition \ref{prop:zeta} it follows that $\alpha_j = \pm \I p^{1/2} =
-\alpha_{2g-j}$ for $1 \leq j \leq g$ after suitable relabeling. If $s$ is even then $\alpha_j^s =
\alpha_{2g-j}^s = \I^s p^{s/2}$ and
\begin{equation*}
(1-\alpha_j^sT)(1-\alpha_{2g-j}^sT) = 1 - 2\I ^{s}p^{s/2}T +p^sT^2 =
\begin{cases}
(1-p^{s/2}T)^2 & \textup{if $s \equiv 0 \ (\bmod{4})$},\\
(1+p^{s/2}T)^2 & \textup{if $s \equiv 2 \ (\bmod{4})$}.
\end{cases}
\end{equation*}
If $s$ is odd then $\alpha_j^s = \pm \I^s p^{s/2} = -\alpha_{2g-j}^s$, and therefore
\begin{equation*}
(1-\alpha_j^sT)(1-\alpha_{2g-j}^sT) = 1+p^sT^2.
\end{equation*}

Now assume that $p\equiv 1 \ (\bmod{4})$. Then none of the elements of $\ker(\Tr_{\FF_{p^2}/\FF_p})$ 
are squares in $\FF_{p^2}$, and we conclude that
\[
\#D_1(\FF_{p^2})=1+p=1+p^2-(p-1)p.
\]
Again as in the proof of Proposition \ref{prop:zeta} it follows that, up to relabeling, $\alpha_j =
p^{s/2} = \alpha_{2g-j}$ for $1 \leq j \leq g/2$, and $\alpha_j = - p^{s/2} = \alpha_{2g-j}$ for
$g/2+1 \leq j \leq g$. (Note that $g$ is even since $p \equiv 1 \ (\bmod{4})$.) We may therefore
relabel again to ensure that $\alpha_j = p^{s/2} = - \alpha_{2g-j}$, for $1 \leq j \leq g$. With
this new labeling, if $s$ is even, then $\alpha_j^s = \alpha_{2g-j}^s = p^{s/2}$, and
\begin{equation*}
(1-\alpha_j^s T)(1-\alpha_{j+g/2}^s T) = (1-p^{s/2}T)^2,
\end{equation*}
and if $s$ is odd then $\alpha_j^s = p^{s/2} = - \alpha_{2g-j}$ and
\begin{equation*}
(1-\alpha_j^s T)(1-\alpha_{j+g/2}^s T) = (1-p^{s/2}T)(1+p^{s/2}T) = (1-p^sT^2).
\end{equation*}
This concludes Case 1.


\bigskip\noindent
{\bf Case 2:} The element $a$ is a nonsquare in $\FF_{p^s}^\ast$ and  $s$ is odd.

Then the set $\{ a\beta^2 : \beta \in \FF_{p^s}^\ast\}$ contains $(p^s-1)/2$ distinct
elements, all of which are nonsquares. As a consequence, this set contains all nonsquares of
$\FF_{p^s}$. For $s$ odd, the nonsquares in $\FF_p^\ast$ are also nonsquares in $\FF_{p^s}^\ast$,
and therefore the set $\{ a\beta^2 : \beta \in \FF_{p^s}^\ast\}$ contains an element in
$\FF_p^\ast$. (In fact, this set contains all the nonsquares in $\FF_p$.) Lemma \ref{lem:Fqmodels}
now implies that the curve $D_a$ is isomorphic over $\FF_q$ to the curve $D_1$, and the desired
result follows therefore from Case~1.


\bigskip\noindent {\bf Case 3:} The element $a$ is a nonsquare in  $\FF_{p^s}^\ast$ and  $s$ is even.

Here, we consider $M:=\ker(\Tr_{\FF_{p^s}/\FF_p})=\{z\in \FF_{p^s} :
\Tr_{\FF_{p^s}/\FF_p}(z)=0\}.$ Since the trace is surjective and $\FF_p$-linear, the cardinality of
$M$ is $p^{s-1}$. We may write $M$ as a disjoint union
%
\[
M=\{0\}\cup M^{\rm\scriptstyle sq}\cup M^{\rm\scriptstyle nsq},
\]
where $M^{\rm\scriptstyle sq}$ (resp.\ $M^{\rm\scriptstyle nsq}$) are
the elements of $M\setminus\{0\}$ which are squares
(resp.\ nonsquares) in $\FF_{p^s}^\ast$.

As in the proof of Case 1 we have
\[
\#D_1(\FF_{p^s})=1+p+2p \, \# M^{\rm\scriptstyle sq},
\]
and a similar argument gives
\[
\#D_a(\FF_{p^s})=1+p+2p \, \# M^{\rm\scriptstyle nsq}.
\]
From the expression for $\#D_1(\FF_{p^s})$ computed in Case 1, it follows that
\[
\# M^{\rm\scriptstyle sq}=
\begin{cases}
\frac{p^{s-1}-1}{2}+\frac{(p-1)}{2}p^{(s-2)/2}& \textup{ if $p\equiv
  3 \ (\bmod{4})$ and
  $s\equiv2 \ (\bmod{4})$},\\ \frac{p^{s-1}-1}{2}-\frac{(p-1)}{2}p^{(s-2)/2}&
\textup{ if $p\equiv 1 \ (\bmod{4})$ or $s\equiv 0 \ (\bmod{4})$}.
\end{cases}
\]
Since $\# M^{\rm\scriptstyle nsq}=\# M-1-\# M^{\rm\scriptstyle sq}=p^{s-1}-1-\# M^{\rm\scriptstyle
sq}$, we conclude that
\[
\#D_a(\FF_{p^s})=
\begin{cases}
1+p^s-(p-1)p^{s/2}&  \textup{ if $p\equiv 3 \ (\bmod{4})$ and $s\equiv 2 \ (\bmod{4})$},\\
1+p^s+(p-1)p^{s/2}&
\textup{ if $p\equiv 1 \ (\bmod{4})$ or $s\equiv 0 \ (\bmod{4})$}.
\end{cases}
\]
The expressions for the $L$-polynomial now follow as in the previous cases.
\hfill $\qed$\end{proof}

We finish this section by proving that all curves $C_R$ are supersingular. This result is not new.
Our proof just adds some details to Theorem 13.7 in \cite{GeerVlugt}. An alternative proof is given
by Blache (\cite[Corollary 3.7 (ii)]{Blache}).

\begin{proposition}\label{prop:ss}
The curve $C_R$ is supersingular, i.e., its Jacobian is isogenous over $k = \overline{\FF}_q$
to a product of supersingular elliptic curves.
\end{proposition}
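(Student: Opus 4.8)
The plan is to reduce the supersingularity of $C_R$ over $k=\overline{\FF}_q$ to the already-established structure theory, namely the decomposition of the Jacobian in Proposition \ref{prop:KR} together with the explicit description of the quotient curve in Theorem \ref{thm:Aquotient}. Since isogeny over $\FF_q$ is in particular isogeny over $k$, Proposition \ref{prop:KR} gives $J_R\sim_k \Jac(\overline{C}_{A_p})^{p^h}$, so it suffices to show that $\overline{C}_{A_p}$ is supersingular. By Theorem \ref{thm:Aquotient}, $\overline{C}_{A_p}$ is isomorphic over $\FF_q$ (hence over $k$) to the curve $D_a\colon Y^p-Y=aX^2$ for a suitable $a\in\FF_q^\ast$, and over $k$ all the $D_a$ are isomorphic (Lemma \ref{lem:Fqmodels}), so I only need to treat $D_1\colon Y^p-Y=X^2$.

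First I would handle the base case $h=0$ by a direct argument. One clean route: Theorem \ref{thm:zeta} computes the $L$-polynomial of $D_1$ over $\FF_{p^s}$ for all $s$, and in every case the reciprocal roots $\alpha_i$ satisfy $\alpha_i^{12}=p^{6s}$ (for instance $\alpha_i=\pm p^{s/2}$ when $s$ is even, and $\alpha_i^2=\pm p^s$ in general), i.e. some fixed power of each Frobenius eigenvalue is a rational power of $p$; equivalently the Newton polygon of $D_1$ is a straight line of slope $1/2$. By the standard criterion (a curve over a finite field is supersingular precisely when all Frobenius eigenvalues have absolute value $p^{s/2}$ and are, up to a root of unity, equal to $p^{s/2}$ — equivalently, when the Jacobian becomes isogenous over $k$ to a power of a supersingular elliptic curve), this shows $\Jac(D_1)$ is supersingular. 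Alternatively, and perhaps more transparently, one notes that $D_1$ is dominated by the Hermitian-type curve and its Jacobian is isogenous to a product of copies of the supersingular elliptic curve $y^p-y=x^2$; but invoking Theorem \ref{thm:zeta} is the most self-contained option given what is already proved.

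Then I would conclude: $\Jac(\overline{C}_{A_p})\sim_k \Jac(D_1)$ is supersingular, and therefore $J_R\sim_k \Jac(\overline{C}_{A_p})^{p^h}$ is isogenous over $k$ to a product of supersingular elliptic curves. Since being supersingular is an isogeny invariant, $C_R$ is supersingular, which is the assertion. For completeness I would remark that the cited alternative (Blache, \cite[Corollary 3.7(ii)]{Blache}) and Theorem 13.7 of \cite{GeerVlugt} give the same conclusion by different means.

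The main obstacle is really just the base case $h=0$: one must be slightly careful that ``all Frobenius eigenvalues are $p^{s/2}$ times a root of unity'' genuinely forces supersingularity of the abelian variety (and not merely that its $L$-polynomial looks supersingular). This follows because such an abelian variety over $\FF_{p^s}$, after a finite extension, has all Frobenius eigenvalues equal to $\pm p^{s'/2}$, hence is isogenous to a power of an elliptic curve with that property, and an elliptic curve over a finite field with Frobenius eigenvalue $\pm p^{s'/2}$ is supersingular. Everything else is bookkeeping with the results already in the excerpt.
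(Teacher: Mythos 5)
Your proof is correct, but it takes a longer route than the paper does. The paper's own proof is two sentences: it cites the criterion that $C_R$ is supersingular if and only if all Newton polygon slopes of its $L$-polynomial equal $1/2$ (referencing Tate), and then observes that Theorem \ref{thm:zeta} --- which computes $L_{C_R,\FF_{p^s}}(T)$ explicitly for \emph{every} $h$, not just $h=0$ --- immediately exhibits all reciprocal roots with $|\alpha_i|=p^{s/2}$. Your detour through Proposition \ref{prop:KR}, Theorem \ref{thm:Aquotient}, and Lemma \ref{lem:Fqmodels} to reduce to the curve $D_1\colon Y^p-Y=X^2$ is sound but redundant, since Theorem \ref{thm:zeta} already performs exactly that reduction internally (via Corollary \ref{cor:KR}); you are unwinding one layer of machinery the paper has packaged for you. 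Interestingly, your argument is essentially the one the paper attributes to Van der Geer and Van der Vlugt in the remark immediately following its proof: decompose $J_R$ up to isogeny into $p^h$ copies of $\Jac(D_1)$ and then use supersingularity of $D_1$. What your route buys is independence from the full strength of Theorem \ref{thm:zeta} (you only need the $h=0$ case of the point count); what the paper's route buys is brevity and the fact that one reads off the slopes directly from the already-proved formula. Your caution about the last step --- that having all Frobenius eigenvalues equal to $p^{s/2}$ times a root of unity genuinely forces isogeny over $k$ to a power of a supersingular elliptic curve, and not merely a supersingular-looking $L$-polynomial --- is well placed and correctly resolved; the paper handles this by citing Tate's theorem rather than arguing it out.
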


\begin{proof}
The curve $C_R$ is supersingular if and only if all the slopes of the Newton polygon of the
$L$-polynomial are $1/2$. (This follows for example from \cite[Theorem 2]{Tate}.)
The statement of
the proposition  follows therefore from Theorem \ref{thm:zeta} .
\hfill $\qed$\end{proof}

The reasoning of Van der Geer and Van der Vlugt for Theorem 13.7 of \cite{GeerVlugt} is  slightly different,
since they do not compute the $L$-polynomial of $C_R$ over $\FF_q$. They argue that the Jacobian
variety $J_R$ of $C_R$ is isogenous over $k$ to $p^h$ copies of the Jacobian of the curve $D_1$
with equation $Y^p-Y=X^2$. (This is a weaker version of Proposition \ref{prop:KR}.) They then use
the fact that the curve $D_1$ is supersingular.

\section{Examples}\label{sec:examples}

By work of Ihara \cite{Ihara}, Stichtenoth and Xing \cite{StichtenothXing}, and Fuhrmann and Torres
\cite{FuhrmannTorres}, we know that for $q$ a power of a prime, a curve $C$ which is maximal over
$\FF_{q^2}$ satisfies
\begin{equation*}
g(C) \in \left[0,\frac{(q-1)^2}{4}\right] \cup \left\{\frac{q(q-1)}{2}\right\}.
\end{equation*}

Moreover, the Hermite curves are the only maximal curves of genus $(q(q-1))/2$
\cite{RueckStichtenoth}.

Recall from Sect. \ref{sec:zeta} that a curve $C$ is maximal over $\FF_{p^{2s}}$ if and only if
its $L$-polynomial satisfies $L_{C,\FF_{p^{2s}}} = (1+p^{2s}T)^{2g(C)}$. In our setting, Theorem
\ref{thm:zeta} shows that for a curve $C_R$ of the type considered in this paper and $a$ defined as in
Theorem \ref{thm:zeta}, if $\FF_{p^s}$ contains the splitting field $\FF_q$ of $E(X)$, then $C_R$
is maximal over $\FF_{p^s}$ if and only if one of the following holds:
\begin{itemize}
\item $s$ is even, $a$ is a nonsquare in $\FF_q^\ast$, and $p \equiv 1 \ (\bmod{4})$,
\item $s \equiv 0 \ (\bmod{4})$, $a$ is a nonsquare in $\FF_q^\ast$, and $p \equiv 3 \ (\bmod{4})$, or
\item $s \equiv 2 \ (\bmod{4})$, $a$ is a square in $\FF_q^\ast$, and $p \equiv 3 \ (\bmod{4})$.
\end{itemize}
In each case the negation of the condition on $a$ guarantees that
$C_R$ is a minimal curve over $\FF_{p^s}$.

In light of these facts, the only difficulty in generating examples of
maximal and minimal curves lies in computing suitable elements $a$. In
this section we present certain cases in which such $a$ can be
computed. We start with a discussion of the case $h=0$, and then turn
our attention to $R(X) = X^{p^h}$. For more results along the same
lines we refer to \cite{CO07} and \cite{AnbarMeindl}.
In \cite{CO08} it is shown that all curves $C_R$ that are maximal over
the field $\FF_{p^{2n}}$ are quotients of the Hermite curve $H_{p^n}$
with affine equation $y^{p^n}-y=x^{p^n+1}$. 

At the end of this section we
briefly investigate isomorphisms between certain curves $C_R$ and
curves with defining equations
\begin{equation*}
Y^p + Y = X^{p^h+1}.
\end{equation*}
Throughout this section, we let $H_p$ denote the Hermite curve which is defined by the affine equation
\begin{equation} \label{eq:hermite}
 Y^p+Y=X^{p+1}.
\end{equation}
As mentioned above, this is a maximal curve over $\FF_{p^2}$.
The curve $Y^p+Y=X^2$ is a quotient of the Hermite curve $H_p$, and therefore this curve is maximal
over $\FF_{p^2}$.
The following lemma determines when the twists
\begin{equation*}
Y^p-Y = aX^2
\end{equation*}
of this curve are maximal. A similar result can also be found in Lemma
4.1 of \cite{CO07}. 

\begin{lemma}\label{lem:max}
Let $R(X)=aX\in \FF_{p^{2s}}[X]$. Then $C_R$ is maximal over $\FF_{p^{2s}}$ if and only if one of
the following conditions holds:
\begin{enumerate}
\item $p\equiv 1 \ (\bmod{4})$ and $a\in \FF_{p^{2s}}^\ast$ is a nonsquare,
\item $p\equiv 3 \ (\bmod{4})$, $s$ is even, and $a\in \FF_{p^{2s}}^\ast$ is a nonsquare, or
\item $p\equiv 3 \ (\bmod{4})$, $s$ is odd, and $a\in \FF_{p^{2s}}^\ast$ is a square.
\end{enumerate}
\end{lemma}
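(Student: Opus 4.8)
The plan is to obtain this as an immediate specialization of Theorem \ref{thm:zeta} to the case $h=0$, so that the only real work is bookkeeping about parities and quadratic residues. First I would translate the notation. With $R(X)=aX$ we are in the case $h=0$, so $C_R$ has genus $g=(p-1)/2$, and the polynomial $E(X)$ of (\ref{eq:E}) is $E(X)=R(X)+a_0X=2aX$, which already splits over $\FF_p$ since $p$ is odd. Hence the splitting field $\FF_q$ of $E$ equals the field of definition $\FF_{p^{2s}}$, so $\FF_{p^{2s}}$ is (trivially) an extension of $\FF_q$ and Theorem \ref{thm:zeta} applies with base field $\FF_{p^{2s}}$. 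Moreover, by the $h=0$ clause of Theorem \ref{thm:Aquotient}, the constant $a_{\mathcal A}$ occurring in Theorem \ref{thm:zeta} is exactly $a_0=a$, so the ``$a$'' of that theorem is the ``$a$'' of the lemma.

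Next I would recall, from the list of properties of the zeta function at the start of Sect. \ref{sec:zeta}, that a curve $C$ of genus $g$ over $\FF_{p^m}$ with $m$ even is maximal if and only if every reciprocal root of its $L$-polynomial equals $-p^{m/2}$, i.e.\ if and only if $L_{C,\FF_{p^m}}(T)=(1+p^{m/2}T)^{2g}$. Taking $m=2s$, so that $m/2=s$, this says that $C_R$ is maximal over $\FF_{p^{2s}}$ precisely when $L_{C_R,\FF_{p^{2s}}}(T)=(1+p^{s}T)^{2g}$.

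It then remains to run through Theorem \ref{thm:zeta} with the symbol ``$s$'' there replaced by the even integer $2s$, and to record when the $L$-polynomial equals $(1+p^sT)^{2g}$. For $p\equiv 1\pmod 4$, part (1) of that theorem gives $L=(1+p^sT)^{2g}$ exactly when $a$ is a nonsquare in $\FF_{p^{2s}}^\ast$, which is condition (1). For $p\equiv 3\pmod 4$ one splits on the residue of $2s$ modulo $4$: the relation $2s\equiv 0\pmod 4$ is equivalent to $s$ even, and then part (2) yields $L=(1+p^sT)^{2g}$ exactly when $a$ is a nonsquare (condition (2)); the relation $2s\equiv 2\pmod 4$ is equivalent to $s$ odd, and then part (2) yields $L=(1+p^sT)^{2g}$ exactly when $a$ is a square (condition (3)). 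In each of these regimes the opposite condition on $a$ instead forces $L_{C_R,\FF_{p^{2s}}}(T)=(1-p^sT)^{2g}$, so $C_R$ is minimal there. Since the three regimes ($p\equiv 1$; $p\equiv 3$ with $s$ even; $p\equiv 3$ with $s$ odd) are mutually exclusive and exhaustive, this establishes both directions of the lemma.

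I do not expect a genuine obstacle here: everything rests on Theorem \ref{thm:zeta}, which is assumed. The only point needing care is the bookkeeping of parities — keeping straight that the field $\FF_{p^{2s}}$ of the lemma plays the role of ``$\FF_{p^s}$'' in Theorem \ref{thm:zeta}, that $2s$ is always even, and that $2s\equiv 0$ or $2\pmod 4$ according as $s$ is even or odd — together with matching up the ``$a$ is a (non)square'' alternatives correctly in each case.
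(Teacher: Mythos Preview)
Your proposal is correct and follows exactly the paper's approach: the paper's proof simply observes that $E(X)=2aX$ so $\FF_{p^{2s}}$ trivially contains the splitting field of $E$, and then invokes Theorem \ref{thm:zeta}. You have spelled out the case-by-case bookkeeping (matching the $h=0$ clause $a_{\mathcal A}=a_0=a$ of Theorem \ref{thm:Aquotient}, the substitution $s\mapsto 2s$, and the parity splits) that the paper leaves implicit, but the argument is the same.
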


\begin{proof}
In this case we have $E(X)= 2aX$, hence $\FF_{p^{2s}}$ automatically contains the splitting field
of $E$. The lemma therefore follows from Theorem \ref{thm:zeta}.
\hfill $\qed$\end{proof}


\begin{remark}\label{rem:manypoints}
The database manYPoints (\cite{manypoints}) compiles records of curves
with many points. The following two maximal curves fall in the range
of genus and cardinality covered in the database, and have now been
included in manYPoints. Previously, the database did not state any lower bound for the maximum number of points of a curve of genus $5$ over $\FF_{11^4}$ and a curve of genus $9$ over $\FF_{19^4}$.
\begin{enumerate} 
\item In the case where $h=0$, $p=11$ and $s=4$, let $a \in \FF_{11^4}^\ast$ be a nonsquare. Then the curve
\begin{equation*}
Y^{11} - Y = aX^2
\end{equation*}
is maximal over $\FF_{11^4}$ and of genus $5$. 
\item In the case where $h=0$, $p=19$ and $s = 4$, let $a \in \FF_{19^4}$ be a nonsquare. Then the curve
\begin{equation*}
Y^{19} - Y = aX^2
\end{equation*}
is maximal over $\FF_{19^4}$ and of genus $9$. 
\end{enumerate}
\end{remark}

The following proposition gives an example of a class of maximal
curves with small genus compared to the size of their field of
definition, in contrast to the Hermite curves which have large
genus. A similar result for
$p=2$ can be found in Theorem 7.4 of \cite{GeerVlugt}.  A similar result with $p$ replaced by an arbitrary prime power
can be found in Proposition 4.6 of \cite{CO07}. 

\begin{proposition}\label{prop:exa}
Let $h\geq 1$.
\begin{enumerate}
\item \label{exa-part1} Let $R(X)=X^{p^h}$. Then $E(X)=X^{p^{2h}}+X$, which has splitting field
    $\FF_q=\FF_{p^{4h}}$. The curve $C_R$ is minimal over $\FF_q$.
\item \label{exa-part2} Let $a_h\in \FF_{p^{2h}}^\ast$ be an element with $a_h^{p^h-1}=-1$ and
    define $R(X)=a_hX^{p^h}$. Then $E(X)=a_h^{p^h}(X^{p^{2h}}-X)$, which has splitting field
    $\FF_q=\FF_{p^{2h}}$. The curve $C_R$ is maximal over $\FF_q$.
\end{enumerate}
\end{proposition}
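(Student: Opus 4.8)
The plan is to treat the two parts in turn, in each case reading off $\FF_q$ from the explicit shape of $E(X)$ and then settling maximality versus minimality by a point count.

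For part \ref{exa-part2}, the condition $a_h^{p^h-1}=-1$ gives $E(X)=a_h^{p^h}X^{p^{2h}}+a_hX=a_h^{p^h}(X^{p^{2h}}-X)$, so its zero set is $W=\FF_{p^{2h}}$ and its splitting field is $\FF_q=\FF_{p^{2h}}$, as claimed. Since then $W=W(\FF_q)=\FF_q$, part \ref{item:prop:b-part1} of Proposition \ref{prop:B} yields $\Tr_{\FF_q/\FF_p}(xR(x))=0$ for \emph{every} $x\in\FF_q$ (concretely, $xR(x)=a_hx^{p^h+1}$ with $x^{p^h+1}=N_{\FF_{p^{2h}}/\FF_{p^h}}(x)\in\FF_{p^h}$, and $\Tr_{\FF_{p^{2h}}/\FF_{p^h}}(a_h)=a_h(1+a_h^{p^h-1})=0$). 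Hence each of the $p^{2h}$ values $x\in\FF_q$ contributes $p$ points $(x,y)\in C_R(\FF_q)$, so $\#C_R(\FF_q)=p\cdot p^{2h}+1=p^{2h+1}+1$, which meets the Hasse--Weil upper bound $p^{2h}+1+2g(C_R)p^{h}$; thus $C_R$ is maximal over $\FF_q$.

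For part \ref{exa-part1}, $R(X)=X^{p^h}$ gives $E(X)=X^{p^{2h}}+X=X(X^{p^{2h}-1}+1)$, whose splitting field is $\FF_{p^m}$ for the least $m$ with $\mu_{2(p^{2h}-1)}\subset\FF_{p^m}^\ast$, i.e.\ with $2(p^{2h}-1)\mid p^m-1$; since this forces $2h\mid m$ (via $\gcd(p^a-1,p^b-1)=p^{\gcd(a,b)}-1$), while $m=2h$ fails and $m=4h$ works ($p^{2h}+1$ being even), we get $\FF_q=\FF_{p^{4h}}$. For minimality I would reduce to part \ref{exa-part2}: fix $\delta\in W\setminus\{0\}$, so $\delta^{p^{2h}}=-\delta$; then $a:=\delta^{p^h+1}\in\FF_{p^{2h}}^\ast$ (as $p^h+1$ is even) satisfies $a^{p^h-1}=\delta^{p^{2h}-1}=-1$, and $(x,y)\mapsto(x/\delta,y)$ is an $\FF_q$-isomorphism $C_{X^{p^h}}\to C_{aX^{p^h}}$. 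By part \ref{exa-part2}, $C_{aX^{p^h}}$ is maximal over $\FF_{p^{2h}}$, so its $L$-polynomial there is $(1+p^hT)^{2g}$ and hence over the quadratic extension $\FF_{p^{4h}}$ it is $(1-p^{2h}T)^{2g}$ (cf.\ Remark \ref{rem:minmax}); transporting along the isomorphism, $C_{X^{p^h}}$ is minimal over $\FF_q=\FF_{p^{4h}}$.

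Both statements also follow from Theorem \ref{thm:zeta} once one computes the square class of $a_{\mathcal A}=\frac{a_h}{2}\prod_{c\in\overline{A}\setminus\{0\}}c$: for part \ref{exa-part1} one may take $\overline{A}=\delta\FF_{p^h}$ (isotropic for the pairing $\epsilon$, since $t^{p^{h+i}}=t^{p^i}$ for $t\in\FF_{p^h}$), giving $a_{\mathcal A}=-\delta^{p^h-1}/2$, which is a square in $\FF_{p^{4h}}^\ast$ because $\delta^{p^h-1},-1/2\in\FF_{p^{2h}}^\ast$ and every element of $\FF_{p^{2h}}^\ast$ is a square in $\FF_{p^{4h}}^\ast$; for part \ref{exa-part2} one takes $\overline{A}=\FF_{p^h}$, gets $a_{\mathcal A}=-a_h/2$, and reads off the square class from $a_h^{(p^{2h}-1)/2}=(a_h^{p^h-1})^{(p^h+1)/2}=(-1)^{(p^h+1)/2}$. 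The main obstacle is the bookkeeping for the splitting field in part \ref{exa-part1} — showing $4h$, and not a smaller multiple of $2h$, is the least such exponent — together with verifying that the twisting element $a=\delta^{p^h+1}$ (or, in the second approach, the chosen $\overline{A}$) really has the required properties and tracking square classes across $\FF_{p^{2h}}\subset\FF_{p^{4h}}$.
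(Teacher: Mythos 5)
Your proof is correct, but for the maximality/minimality statements it takes a genuinely different route from the paper. The paper establishes both parts by computing the square class of the constant $a_{\mathcal A}=\frac{a_h}{2}\prod_{c\in\overline{A}\setminus\{0\}}c$ for an explicit maximal isotropic subspace $\overline{A}\subset W$ and then invoking Theorem \ref{thm:zeta} (this is essentially your ``second approach,'' which you only sketch). Your primary argument for part \ref{exa-part2} instead counts points directly: since $W=\FF_{p^{2h}}=\FF_q$, every $x\in\FF_q$ satisfies $\Tr_{\FF_q/\FF_p}(xR(x))=0$ (your norm/trace computation $\Tr_{\FF_{p^{2h}}/\FF_{p^h}}(a_h)=a_h(1+a_h^{p^h-1})=0$ is a clean verification), giving $\#C_R(\FF_q)=p^{2h+1}+1$, which is exactly the Hasse--Weil upper bound $p^{2h}+1+2g\,p^h$ for $g=p^h(p-1)/2$ — this is more elementary and bypasses the whole machinery of Sects.\ 5--7. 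For part \ref{exa-part1} you then reduce to part \ref{exa-part2} via the $\FF_q$-isomorphism $(x,y)\mapsto(x/\delta,y)$ onto $C_{aX^{p^h}}$ with $a=\delta^{p^h+1}$ (the checks $a\in\FF_{p^{2h}}^\ast$, $a^{p^h-1}=\delta^{p^{2h}-1}=-1$ are correct) and use the base-change behaviour of the $L$-polynomial from $\FF_{p^{2h}}$ to $\FF_{p^{4h}}$ (Remark \ref{rem:minmax}); this is a nice structural shortcut the paper does not use. Your argument that the splitting field in part \ref{exa-part1} is exactly $\FF_{p^{4h}}$ (via $2(p^{2h}-1)\mid p^m-1$ forcing $2h\mid m$ and $m=2h$ failing) is also more detailed than the paper, which simply asserts it. What the paper's approach buys is uniformity: the $a_{\mathcal A}$ computation is the template for generating further examples, whereas your point count exploits the special coincidence $W=\FF_q$ in part \ref{exa-part2} and would not generalize as readily.
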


\begin{proof}
We first prove the statement about the splitting field of $E(X)$ for both cases. Consider the
additive polynomial $R(X)=a_hX^{p^h}\in \FF_{p^{s}}[X]$ with $h\geq 1$. Then (\ref{eq:E}) shows
that
\[
E(X)=a_h^{p^h}X^{p^{2h}}+a_hX.
\]
If $a_h=1$, then $E$ has splitting field $\FF_q=\FF_{p^{4h}}$. If $a_h\in \FF_{p^{2h}}^\ast$
satisfies $a_h^{p^h-1}=-1$, then $E(X)=a_h^{p^h}(X^{p^{2h}}-X)$, which has splitting field
$\FF_q=\FF_{p^{2h}}$. 
In both cases, we conclude from the explicit expression of $E$ that
\[
W=\{c\in \overline{\FF}_p : c^{p^{2h}}=-a_h^{1-p^h}c\}.
\]
For every $c\in W$, the formulas (\ref{eq:B2}) and (\ref{eq:B3}) imply that
\[
B_c(X)=-\sum_{i=0}^{h-1}a_h^{p^i}c^{p^{h+i}}X^{p^i}.
\]
We first consider the case where $a_h=1$. Choose an element $c\in W\setminus\{0\}$, i.e.,
$c^{p^{2h}}=-c$, and define
\[
\overline{A}=\{c\zeta: \zeta\in \FF_{p^h}\}\subset W.
\]
For any two $\zeta_j$, $\zeta_k$ in $\FF_{p^h}$, we have
\[
B_{c\zeta_j}(c\zeta_k)=-\sum_{i=0}^{h-1} \zeta_j^{p^{h+i}}c^{p^{h+i}+p^i} \zeta_k^{p^{i}}
    =-\sum_{i=0}^{h-1} \zeta_j^{p^{i}}c^{p^{h+i}+p^i} \zeta_k^{p^{h+i}}=B_{c\zeta_k}(c\zeta_j),
\]
since $\zeta^{p^h}=\zeta$ for any $\zeta \in \FF_{p^h}$. Therefore the pairing from part
\ref{lem:commutator-part1} of Lemma \ref{lem:commutator} satisfies
\[
\epsilon(c\zeta_j,c\zeta_k)=B_{c\zeta_j}(c\zeta_k)-B_{c\zeta_k}(c\zeta_j)=0 \quad\text{ for any pair
  $(c\zeta_j, c\zeta_k) \in \overline{A}^2$}.
\]
We conclude that $\overline{A}\subset W$ is a maximal isotropic subspace. Write ${\mathcal
A}\subset P$ for the corresponding maximal abelian subgroup of $P$. Recall the constant from
Theorem \ref{thm:Aquotient},
\[
a_{\mathcal A}=\frac{a_h}{2}\prod_{\gamma\in \overline{A}\setminus \{0\}} \gamma,
\]
when $h \geq 1$. Here the leading coefficient $a_h$ of $R(X)$ is $1$. The definition of $\overline{A}$ implies that
\[
\prod_{\gamma\in \overline{A}\setminus
  \{0\}} \gamma=c^{p^h-1}\prod_{\zeta\in \FF_{p^h}^\ast }\zeta=-c^{p^h-1}.
\]
We conclude that $a_{\mathcal A}=-c^{p^h-1}/2$ is a square in $\FF_q^\ast$, since $-1/2$ is a
square in $\FF_{p^2}^\ast\subset \FF_q^\ast$. Theorem \ref{thm:zeta} now yields
\[
L_{C_R,\FF_{q}}(T)=(1-\sqrt{q}T)^{2g}.
\]
It follows that  $C_R$ is minimal over $\FF_q$. 

We now assume that $a_h\in \FF_{p^{2h}}^\ast$ satisfies $a_h^{p^h}=-a_h$. In this case the
splitting field of $E(X)$ is $\FF_q=\FF_{p^{2h}}$ as shown earlier. Choose a primitive
$(p^{2h}-1)$-st root of unity $\zeta$. Then we may write $a_h=\zeta^{(2j+1)(p^h+1)/2}$ for some
$j$. It follows that $a_h\in\FF_q^\ast$ is a square if and only if $(p^h+1)/2$ is even. This is
equivalent to $p\equiv 3 \ (\bmod{4})$ and $h$ odd.

We choose $\overline{A}=\FF_{p^h}\subset W=\FF_{p^{2h}}$. For every $c, c'\in \overline{A}$, we
have
\[
B_{c}(c')= - \sum_{i=0}^{h-1}(a_hcc')^{p^i}=B_{c'}(c).
\]
As in the proof of part \ref{exa-part1}, we conclude that $\overline{A}$ is a maximal isotropic
subspace for the pairing $\epsilon$ from part \ref{lem:commutator-part1} of Lemma
\ref{lem:commutator}. Since
\[
\prod_{c\in \overline{A}\setminus\{0\}} c= -1,
\]
we conclude that $a_{\mathcal A}$ is equivalent to $a_h$ modulo squares in $\FF_q^\ast$.  (The
argument is similar to that in the proof of part \ref{exa-part1}.)  We conclude that $a_{\mathcal
A}$ is a square in $\FF_q^\ast$ if and only of $p\equiv 3 \ (\bmod{4})$ and $h$ is odd. Theorem
\ref{thm:zeta} implies that $C_R$ is a maximal curve over $\FF_q$ in each of these cases. This
proves part \ref{exa-part2}.
\hfill $\qed$\end{proof}

\begin{remark}
 In their follow-up paper \cite{GeerVlugt2} to \cite{GeerVlugt}, Van
 der Geer and Van der Vlugt constructed further examples of maximal
 curves as a fiber product of the curves $C_R$. We have not considered
 this construction in the case of odd characteristic. We leave  this  as a
 subject for future research.
\end{remark}

\begin{example}\label{exa:hermite} \hfill
\begin{enumerate}
\item We consider the Hermite curve $H_p$ given in (\ref{eq:hermite}),
  and the curve $C_R$ given by
\begin{equation*}
Y^p - Y = X^{p+1}.
\end{equation*}
We claim that the curves $H_p$ and $C_R$ are not isomorphic over $\FF_{p^2}$. To see this, we
show that $\#C_R(\FF_{p^2})=1+p\neq 1+p^3=\#H_p(\FF_{p^2})$. This clearly implies that the two
curves are not isomorphic over $\FF_{p^2}$.

We note that
\[
\psi\colon \FF_{p^2}^\ast\to \FF_{p^2}^\ast,\qquad x\mapsto x^{1+p}
\]
is the restriction of the norm on $\FF_{p^2}/\FF_p$, so the image of
$\psi$ is $\FF_p^\ast$. It follows that
\[
\Tr_{\FF_{p^2}/\FF_p}(x^{1+p})=2x^{1+p}\neq 0 \quad \text{ for all }x\in \FF_{p^2}^\ast.
\]
We conclude that the $\FF_{p^2}$-rational points of $C_R$ are the $p$
points with $x=0$ together with the unique point $\infty$. This proves
the claim. (Exercise 6.7 in \cite{Stichtenoth} asks to prove that $H_p$
and $C_R$ are isomorphic over $\FF_{p^2}$ if $p\equiv 1 \ (\bmod{4})$. The
above calculation shows that this does not hold.)

However, the Hermite curve  $H_p$ is isomorphic over $\FF_{p^2}$ to the curve given by
\[
C_{R'}:  Y^p-Y=a_1X^{p+1},
\]
where $a_1\in \FF_{p^2}$ satisfies $a_1^{p-1}=-1$. The isomorphism is given by $\psi\colon
C_{R'}\to H_p, \, (x,y)\mapsto (x, a_1^p y)$. This conforms with part \ref{exa-part2} of
Proposition \ref{prop:exa}.

\item Let $a_h\in\FF_{p^{2h}}^\ast$ be an element with $a_h^{p^h}=-a_h$ as in 
    part \ref{exa-part2} of Proposition \ref{prop:exa}. Write $R(X)=a_hX^{p^h}$. Then
    $\psi\colon (x,y)\mapsto (x, a_h^{p^{2h-1}} y)$ defines an isomorphism between $C_R$ and
    the curve given by
\[
Y^p+Y=X^{p^{h}+1}.
\]
Part \ref{exa-part2} of Proposition \ref{prop:exa} therefore implies that this curve is maximal
over $\FF_{p^{2h}}$. This can also be shown directly, for example using Proposition 6.4.1 of
\cite{Stichtenoth}.
\end{enumerate}
\end{example}

\bibliographystyle{spmpsci.bst}
\bibliography{ASbibliography}


\end{document}